\title{\vskip-1.0em\sc Extension of derivations, and Connes-amenability of the enveloping dual Banach algebra}
\author{\sc Yemon Choi, Ebrahim Samei, Ross Stokke}
\date{17th June 2014}
\newcommand{\breakinscand}{} 
\begin{document}
\maketitle

\begin{abstract}
If $D:A \to X$ is a derivation from a Banach algebra to a contractive, Banach $A$-bimodule, then one can equip $X^{**}$ with an $A^{**}$-bimodule structure, such that the second transpose $D^{**}: A^{**} \to X^{**}$ is again a derivation. We prove an analogous extension result, where $A^{**}$ is replaced by $\F(A)$, the \emph{enveloping dual Banach algebra} of $A$, and $X^{**}$ by an appropriate kind of universal, enveloping, normal dual bimodule of~$X$.

Using this, we obtain some new characterizations of Connes-amenability of $\F(A)$. In~particular we show that $\F(A)$ is Connes-amenable if and only if $A$ admits a so-called $\operatorname{WAP}$-virtual diagonal.
We show that when $A=L^1(G)$, existence of a $\operatorname{WAP}$-virtual diagonal is equivalent to the existence of a virtual diagonal in the usual sense. Our approach does not involve invariant means for~$G$.

\bigskip
\noindent
MSC 2010:
46H20
(primary),
43A20
43A60
46H25
(secondary).
\end{abstract}

\tableofcontents


\begin{section}{Introduction}
Amenability for Banach algebras, as introduced and studied in the pioneering work of B. E. Johnson~\cite{BEJ_CIBA}, has proved to be an important and fertile notion. However, it was recognized very early on in the development of the subject that it might not be the ``right'' notion when dealing with, say, von Neumann algebras. There is a natural variant of amenability~\cite{Connes_JFA78} that is better adapted to categories of normal bimodules over von Neumann algebras, and it turns out
that this version of amenability is equivalent to injectivity for von Neumann algebras.\footnotemark
\footnotetext{See \cite{Connes_JFA78} for an indirect proof in the case of separable preduals, which relies on results from \cite{Connes_AnnMath76}. In general, ``amenability'' implies injectivity was shown in \cite{BunPas_PAMS78}, while the results of \cite[\S\S6--7]{Connes_AnnMath76} and \cite{Elliott_AFD2} show that injective von Neumann algebras are AFD. It had been already been observed in \cite{JKR72_coho} that AFD von Neumann algebras are ``amenable'' in an appropriate sense, see Corollary 6.4 and the proof of Theorem 6.1 in that paper.}

Von Neumann algebras are particular examples of so-called \dt{dual Banach algebras}, which are roughly speaking those Banach algebras which possess a \wstar-topology that is suitably compatible with the multiplication.
For general dual Banach algebras, taking our lead from the results obtained for von Neumann algebras, one can define an analogously modified notion of amenability.
This property has become known as \dt{Connes-amenability}, and is usually defined in terms of the behaviour of certain \wstar-continuous derivations from a dual Banach algebra into \emph{normal} dual bimodules.
 Perhaps the first systematic treatment was given by V. Runde in his papers~\cite{Run_DBA1,Run_DBA2}.
 
In certain cases, Connes-amenability of natural dual Banach algebras built from a locally compact group characterizes amenability of the group, in analogy with Johnson's result that amenability of the Banach algebra $L^1(G)$ characterizes amenability of $G$. In particular, we have the following results:

\begin{thm}[Runde; Johnson]
\label{t:BEJ_VR}
Let $G$ be a locally compact group. The following are equivalent:
\begin{YCnum}
\item\label{li:G} $G$ is amenable;
\item\label{li:LG} $L^1(G)$ is amenable as a Banach algebra;
\item\label{li:WAPG*} $\WAPG^*$ is Connes-amenable as a dual Banach algebra;
\item\label{li:MG} $M(G)$ is Connes-amenable as a dual Banach algebra.
\end{YCnum}
\end{thm}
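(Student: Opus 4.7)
The plan is to prove the cyclic chain (i) $\Rightarrow$ (ii) $\Rightarrow$ (iii) $\Rightarrow$ (iv) $\Rightarrow$ (i), combining Johnson's original theorem with the dual-Banach-algebra machinery developed by Runde.

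The equivalence (i) $\Leftrightarrow$ (ii) is Johnson's classical theorem and I would present it in the standard way: a topological left invariant mean $m \in \LIG^*$ is used to construct a bounded approximate diagonal for $\LG$ by an averaging procedure against $m$; conversely, a virtual diagonal for $\LG$ produces a $G$-invariant mean after pairing with a suitable test function in $C_c(G)$.

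For the implications (ii) $\Rightarrow$ (iii) $\Rightarrow$ (iv), I would exploit the following two \wstar-continuous contractive algebra homomorphisms, obtained by dualising the inclusions $C_0(G) \hookrightarrow \WAPG \hookrightarrow \LIG$ of $\LG$-sub-bimodules:
\[
\LG^{**} \xrightarrow{\;\pi\;} \WAPG^* \xrightarrow{\;q\;} M(G).
\]
The map $\pi$ has \wstar-dense range and $q$ is surjective; both are homomorphisms of dual Banach algebras (at least at the level of $\WAPG^* \to M(G)$). If $M \in (\LG \ptp \LG)^{**}$ is a virtual diagonal witnessing amenability of $\LG$, then $(\pi \ptp \pi)(M)$ descends, via the appropriate normal enveloping bimodule of $\WAPG^*$, to a normal virtual diagonal, yielding Connes-amenability of $\WAPG^*$. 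Pushing further through $q \ptp q$ produces a normal virtual diagonal for $M(G)$.

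For (iv) $\Rightarrow$ (i), which I expect to be the main obstacle, the plan is to start from a normal virtual diagonal $\Delta$ for $M(G)$ and extract from it a topological left invariant mean on $G$. The key technical input is that $\LIG$ is a normal dual $M(G)$-bimodule under convolution (since for fixed $\mu \in M(G)$, both $f \mapsto \mu \ast f$ and $f \mapsto f \ast \mu$ on $\LIG$ are preduals of norm-continuous maps on $\LG$). One then evaluates $\Delta$ on the constant function $1_G \in \LIG$ to obtain an element of $\LIG^*$ which is $M(G)$-invariant, and hence, via the embedding $G \hookrightarrow M(G)$ as point masses, left $G$-invariant. The difficulty is that $M(G)$ generally has no bounded approximate identity, so the classical $\LG$-argument cannot be imitated directly; instead one must genuinely use the normal bimodule structure and the (normal) unit $\delta_e$ of $M(G)$, invoking the Johnson-type extension results for derivations into enveloping normal dual bimodules whose development is the main subject of the paper.
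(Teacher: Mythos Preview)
The paper does not give its own proof of this theorem: it is quoted as background, with (i)$\Rightarrow$(ii) credited to Johnson, (ii)$\Rightarrow$(iii)$\Rightarrow$(iv) described as ``straightforward, general results on amenability and Connes-amenability'', and (iv)$\Rightarrow$(i) singled out as the ``(hard!)'' step and attributed entirely to Runde's article on $M(G)$. So there is no in-house argument to compare against, only the cited sources.

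That said, your sketch contains genuine errors. For (ii)$\Rightarrow$(iii) you propose to push a virtual diagonal for $L^1(G)$ forward to a \emph{normal} virtual diagonal for $\WAP(G)^*$. This cannot succeed: as the paper itself records (just before Definition~\ref{d:WAP-VD}), Runde showed that for any amenable non-compact [SIN] group, $\WAP(G)^*$ is Connes-amenable yet has \emph{no} normal virtual diagonal. The correct route is at the level of derivations: a normal dual $\WAP(G)^*$-bimodule is a dual $L^1(G)$-bimodule via $\eta_{L^1(G)}$, a \wstar-continuous derivation restricts to $L^1(G)$ where it is inner by amenability, and \wstar-density of $\eta_{L^1(G)}(L^1(G))$ finishes.

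For (iv)$\Rightarrow$(i) the problems are worse. You start from a normal virtual diagonal for $M(G)$, but Connes-amenability is not known to produce one in general (Runde's characterization is via $\sigWC$-diagonals, a strictly weaker object). You assert that $L^\infty(G)$ is a normal dual $M(G)$-bimodule, but the orbit maps $\mu\mapsto \mu\cdot h$ are not \wstar-\wstar\ continuous for arbitrary $h\in L^\infty(G)$; this failure is essentially \emph{why} the implication is hard. Finally, the claim that ``$M(G)$ generally has no bounded approximate identity'' is false --- $M(G)$ is unital with identity $\delta_e$, as you yourself note two lines later. Runde's actual proof is considerably more intricate, working through carefully chosen submodules of $L^\infty(G\times G)$, and does not proceed by pairing a diagonal with~$1_G$.
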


Here $M(G)$ is the \dt{measure algebra} of $G$, and $\WAPG$ denotes the space of \dt{weakly almost periodic functions on~$G$}, whose dual is equipped with a natural convolution algebra structure\footnotemark\ in the sense of \cite[Definition~19.3]{HewRoss1_ed2}.
\footnotetext{$\WAPG^*$ may also be identified with the convolution algebra of Radon measures on a certain semitopological semigroup, the WAP-compactification of~$G$, although we will not use this perspective. See \cite[\S IV.2]{BerJunMil_book} for further details.}
The implication \ref{li:G}$\implies$\ref{li:LG} is due to Johnson~\cite[Theorem 2.5]{BEJ_CIBA}; the implications \ref{li:LG}$\implies$\ref{li:WAPG*}$\implies$\ref{li:MG} are special cases of straightforward, general results on amenability and Connes-amenability for (dual) Banach algebras; and the (hard!) implication~\ref{li:MG}$\implies$\ref{li:G} is the main result of Runde's article~\cite{Run_MG1}.
We mention for sake of completeness that the implication \ref{li:LG}$\implies$\ref{li:G} is first recorded in \cite{BEJ_CIBA}, where it is attributed to J. R. Ringrose.

One of the original goals of the present work was a more direct proof of the implication \ref{li:WAPG*}$\implies$\ref{li:LG} in Theorem~\ref{t:BEJ_VR} which does not pass through amenability of $G$, and hence might shed light on similar results for more general Banach algebras.
This led to the following question: given a continuous derivation from $\LG$ to a $\LG$-bimodule $X$, is there some way to extend it to a \wswscts\ derivation from $\WAPG^*$ into some suitable dual bimodule?
We shall show that this can be done in a very natural manner. More generally, we prove
(Theorem~\ref{t:extend-der}) that one can always extend continuous derivations from a given Banach algebra $A$ to \wswscts\ derivations out of a dual Banach algebra canonically associated to~$A$.

Our approach to Theorem~\ref{t:extend-der} is motivated by work of F.~Gourdeau. In~\cite{Gou}, he proved that if $D$ is a continuous derivation from a Banach algebra $A$ to a Banach $A$-bimodule $X$, and we equip $A^{**}$ with the first Arens product, then we can equip $X^{**}$ with the structure of a Banach $A^{**}$-bimodule such that $D^{**}:A^{**}\to X^{**}$ becomes a \wswscts\ derivation. The argument has two main ingredients: first, the correspondence between derivations $A\to X$ and {\hm}s from $A$ into a certain `triangular' Banach algebra built from $A$ and $X$; second, the observation that if $\theta:A\to B$ is a continuous \hm\ between Banach algebras, then $\theta^{**}:A^{**}\to B^{**}$ is a \hm\ when both $A^{**}$ and $B^{**}$ are equipped with their first Arens products.
In our setting, we replace the second dual of a Banach algebra (which is in general not a dual Banach algebra) with a certain quotient algebra of this second dual, which can be regarded as the ``enveloping dual Banach algebra'' of a given Banach algebra.
The rest is then very similar to Gourdeau's argument for the second dual.

Once we have proved our extension result, we obtain some characterizations of Connes-amenability of the enveloping dual Banach algebra, analogous to the standard characterizations of amenability in terms of virtual diagonals. In particular, using our results on extension of derivations, we show that the enveloping dual Banach algebra of $A$ is Connes-amenable if and only if $A$ has a so-called \dt{\WAP-virtual diagonal.} Note that \cite{Run_DBA2} already characterizes  Connes-amenability of a given dual Banach algebra $B$ in terms of diagonal-type elements associated to $B$; the point of our approach is to work in a space more closely related to $A$ itself.
In the case $A=\LG$, we show that a \WAP-virtual diagonal for $\LG$ can always be ``lifted'' to a virtual diagonal for $\LG$, and we also give a description of certain submodules of $\LIGG$ that arise naturally in our approach and might be of independent interest.

Here is a summary overview of the paper.
Sections~\ref{s:prelim} and~\ref{s:triangular} set up some terminology, and review the necessary background on dual Banach algebras and triangular Banach algebras. These are then combined in Section~\ref{s:free-ndmod} to obtain the desired extension theorem for derivations (Theorem~\ref{t:extend-der}). En route, we are led to a definition of the natural ``enveloping normal dual bimodule'' (Theorem~\ref{t:FT=TF}) of a given Banach bimodule. Although our approach does not require familiarity with category-theoretic machinery, it was guided by the philosophy of adjoint functors between suitable categories, and we shall make some further comments along these lines in the relevant sections.
Section~\ref{s:functorial} collects some basic functorial properties of these constructions.

Section~\ref{s:CA-of-FA} introduces the notion of a \WAP-virtual diagonal for a given Banach algebra~$A$, and uses it to characterize Connes-amenability of the corresponding enveloping dual Banach algebra. Section~\ref{s:diag-subsp} contains technical results concerning certain subspaces of $(\AA)^*$, with respect to which one can define analogues of virtual diagonals. These are used in Section~\ref{s:anabasis} to show that if $L^1(G)$ has a $\WAP$-virtual diagonal, then it has a virtual diagonal.  The proof is a lifting argument, based on techniques used by Runde in~\cite{Run_MG1}.
In Section~\ref{s:ess-and-WAP-of-LIGG} we describe the ``essential part'' and ``WAP part'' of the $\LG$-bimodule $\LIGG$, since both these subspaces play an important role in Section~\ref{s:anabasis}. Finally, Section~\ref{s:closing} offers some closing remarks and questions.
\end{section}

\begin{section}{Preliminaries}\label{s:prelim}

\subsection{General terminology and background}
We refer to standard sources such as \cite{Bon-Dun} for the definitions of Banach algebras and Banach bimodules. In particular, our (bi)modules need not be contractive unless this is explicitly stated. However, in order to reduce needless repetition, we will adopt some terminology {\bf throughout this paper} that the reader should take heed of.
We speak only of modules or bimodules over a given Banach algebra: it should be understood that these always refer to \emph{Banach} modules or \emph{Banach} bimodules, in the sense of~\cite{Bon-Dun}. Given such a module, whenever we refer to a submodule, we always mean a \emph{closed submodule}; similarly for sub-bimodules. Throughout, all morphisms, derivations etc.~are linear and norm-continuous.

 Our definitions of \dt{dual Banach algebras} and \dt{normal dual bimodules} over them are taken from \cite{Run_DBA1}, although that paper uses  the terminology ``$\wstar$-bimodule''. For convenience and consistency of terminology, we repeat the definitions.

\begin{dfn}
Let $\fB$ be a Banach algebra, and regard $\fB^*$ as a $\fB$-bimodule in the usual way. We say $\fB$ is a \dt{dual Banach algebra with predual $\fB_*$}\/, if there exists a sub-bimodule $\fB_*\subseteq \fB^*$ such that the composition of the two natural maps $\fB\to \fB^{**}$ (embedding in the bidual) and $\fB^{**}\to (\fB_*)^*$ (adjoint of the inclusion $\fB_* \hookrightarrow \fB^*)$ is bijective. More succinctly but less precisely, this  means we require $\fB$ to itself be a dual $\fB$-bimodule.
\end{dfn}

The definition of a dual Banach algebra requires us to specify the predual, but usually we will omit this for sake of brevity, when it is clear from context what the intended predual should be.
If we wish to emphasize a particular choice of predual, we shall say ``$\fB=(\fB_*)^*$ is a dual Banach algebra''.

\begin{dfn}
Let $\fB=(\fB_*)^*$ be a dual Banach algebra, let $\fM_*$ be a $\fB$-bimodule, and let $\fM=(\fM_*)^*$ be the resulting dual $\fB$-bimodule. We say that $\fM$ is a \dt{normal dual $\fB$-bimodule} if, for each $x\in E$, the \dt{orbit maps}
\[ R^{\fB}_x: \fB\to \fM, b\mapsto b\cdot x \quad\text{and}\quad L^{\fB}_x:\fB\to \fM, b\mapsto x\cdot b \]
are both \wswscts\ (with respect to $\fB_*$ and $\fM_*$\/).
\end{dfn}

\begin{rem}\label{r:DBA-w*cts-product}
In the later paper \cite{Run_DBA2}, the following alternative definition is used for a dual Banach algebra: it is a Banach algebra $\fB$, equipped with a (not necessarily isometric) Banach space predual $\fB_*$, such that the multiplication map $\fB\times\fB\to\fB$ is \emph{separately} $\sigma(\fB,\fB_*)$-continuous. The equivalence of this with the definition given above is noted in \cite[\S1.1]{Run_DBA2}, and is a straightforward exercise which we omit.
\end{rem}

For technical reasons, we work mainly with bimodules and normal dual bimodules that are contractive. Since notions such as amenability and Connes-amenability are usually defined in terms of wider classes of bimodules, we should explain briefly why the contractive classes are good enough for our purposes.
Firstly: given a Banach algebra $A$ and an $A$-bimodule $X$, a standard renorming argument produces a contractive $A$-bimodule $X^1$ and an isomorphism of $A$-bimodules $X\iso X^1$.
Secondly: given a dual Banach algebra $\fB$ and two dual $\fB$-bimodules $\fM$ and $\fN$ which are \wstar-isomorphic as bimodules, normality of one implies normality of the other. (Indeed, if this statement were not true, then somehow normality would not be a `natural' notion for dual bimodules over a dual Banach algebra.)

\begin{lem}\label{l:renorm-ndmod}
Let $\fB$ be a dual Banach algebra and let $\fM$ be a normal dual $\fB$-bimodule. Then there exists a contractive, normal dual $\fB$-bimodule $\fN$ which is \wstar-module isomorphic to~$\fM$.
\end{lem}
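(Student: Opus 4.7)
The plan is to renorm the predual $\fM_*$ so that it becomes a contractive Banach $\fB$-bimodule, and then take its dual. Because an equivalent renorming of $\fM_*$ does not change the underlying bimodule action on $\fM=(\fM_*)^*$ or the \wstar-topology on it, the identity map will automatically serve as the required \wstar-module isomorphism; the only thing we gain is the contractivity constant.

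Explicitly, I would define on $\fM_*$ the new norm
\[
\|\mu\|' \defeq \max\Bigl\{\|\mu\|,\ \sup_{\|b\|\le 1}\|b\cdot\mu\|,\ \sup_{\|c\|\le 1}\|\mu\cdot c\|,\ \sup_{\|b\|,\|c\|\le 1}\|b\cdot\mu\cdot c\|\Bigr\}.
\]
Since the $\fB$-actions on $\fM_*$ are bounded by some constant $K$, we have $\|\mu\|\le\|\mu\|'\le (1+2K+K^2)\|\mu\|$, so this is an equivalent Banach space norm; and a routine check shows the $\fB$-actions are contractive with respect to $\|\cdot\|'$. Let $\fN_*$ denote $\fM_*$ equipped with $\|\cdot\|'$, and put $\fN \defeq (\fN_*)^*$ with the resulting dual $\fB$-bimodule structure.

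Since $\|\cdot\|'$ is equivalent to $\|\cdot\|$ on $\fM_*$, the dual norms on $\fN$ and $\fM$ are equivalent; and because the underlying vector spaces and duality pairings coincide, the bimodule actions and the \wstar-topologies $\sigma(\fN,\fN_*)$, $\sigma(\fM,\fM_*)$ agree. Thus the identity map $\fM\to\fN$ is a \wstar-bicontinuous bimodule isomorphism. Contractivity of $\fN$ as a dual bimodule then follows from duality: if $\|\mu\|'\le 1$ then $\|\mu\cdot b\|'\le\|b\|$, whence $|\pair{b\cdot x}{\mu}|=|\pair{x}{\mu\cdot b}|\le\|b\|\,\|x\|_\fN$, and hence $\|b\cdot x\|_\fN\le\|b\|\,\|x\|_\fN$; the right action is symmetric. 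Normality of $\fN$ is inherited from that of $\fM$ since the orbit maps and \wstar-topologies are literally unchanged. There is no substantive obstacle here---the whole argument is a standard renorming together with a routine duality---but the key conceptual point is that the renorming must be carried out on the predual, so as to preserve the dual bimodule structure at the end.
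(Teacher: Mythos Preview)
Your proof is correct and follows essentially the same approach as the paper: renorm the predual $\fM_*$ to make the $\fB$-actions contractive, then take its dual and observe that normality and the \wstar-topology are unaffected by an equivalent renorming. The paper's version is terser, simply invoking the standard renorming $\fM_*^1$ and the preliminary observations preceding the lemma, whereas you have written out the explicit norm and the dualization checks.
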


\begin{proof}
By the observations before the lemma, $\fM_*^1$ is a contractive $\fB$-bimodule which is isomorphic to $\fM_*$. Hence $\fN\defeq (\fM_*^1)^*$ is a contractive dual $\fB$-bimodule that is \wstar-bimodule-isomorphic to $\fM$; and it is moreover normal, since $\fM$~is.
\end{proof}

\subsection{The enveloping dual Banach algebra}
\label{ss:free-DBA}
It is natural to consider the category $\pcat{DBA}$ of dual Banach algebras and \wswscts\ algebra {\hm}s.
At present, this category is less well understood than the usual category $\pcat{BA}$ of Banach algebras and norm-continuous {\hm}s; but the two categories are related via the existence of a ``universal enveloping dual Banach algebra'' associated to a given Banach algebra. This was shown in \cite{Run_DBA2}, but since we shall use slightly different notation in this article, we briefly review the relevant definitions.

\begin{dfn}[{\cite[Definition 4.1]{Run_DBA2}}]
Let $A$ be a Banach algebra and $E$ an 
$A$-bimodule. We denote by $\AWAPA(E)$ the set of all elements $x\in E$ for which the orbit maps $ R^A_x: A \to E, a\mapsto a\cdot x$ and $L^A_x: A \to E: a \mapsto x \cdot a$
are both weakly compact. It follows from standard properties of weakly compact linear maps -- in particular, the fact that they form an operator ideal -- that $\AWAPA(E)$ is a closed sub-bimodule of~$E$.
\end{dfn}

\begin{eg}[The motivating ur-example]
\label{eg:WAPG}
Let $A=\LG$, and regard $A^*=L^\infty(G)$ as an $A$-bimodule in the usual way. Then by a result of \"Ulger~\cite{Ulg_WAPG-cts},  $\AWAPA(A^*)$ coincides with the space $\WAPG$ of \dt{weakly almost periodic functions} on~$G$. (We recall, for sake of completeness, that $f\in \CB(G)$ is said to be weakly almost periodic if the set of left translates and the set of right translates of $f$ are relatively weakly compact subsets of $\CB(G)$.)
\end{eg}

\begin{dfn}
Given a Banach algebra $A$ and a $A$-bimodule $X$, we write $\FAX_*$ for the $A$-bimodule $\AWAPA(X^*)$, where $X^*$ is equipped with the usual bimodule action induced from~$X$.
We then define $\FAX$ to be the dual $A$-bimodule $(\FAX_*)^*$.
In the special case where $X=A$, regarded as an $A$-bimodule in the canonical way, we shall usually omit the subscripts, and simply use the notation $\FA$.

We denote by $\eta_X: X\to \FAX$ the map obtained by composing the canonical inclusion of $X$ in its second dual with the adjoint of the inclusion map $\AWAPA(X^*)\hookrightarrow X^*$\/.
Observe that $\eta_X$ is a norm-continuous $A$-bimodule map, as it is the composition of two such maps. 
\end{dfn}

The space $\FA_*=\AWAPA(A^*)$ is an example of an \dt{introverted subspace} of $A^*$: for the definition in the context of Banach algebras,
see~\cite[\S1]{LauLoy_JFA97}, although the analogous notion in the case $A=\LG$ goes back much further.
It follows from the introversion property that $\FA$ can be equipped with the structure of a Banach algebra with the following property: 
if $A^{**}$ is equipped with its first Arens product, then the adjoint of the inclusion map $\FA_*\hookrightarrow A^*$ is a \wswscts\ quotient \hm\ of Banach algebras $A^{**}\to\FA$.
(See e.g.~\cite[\S1]{LauLoy_JFA97} for details.)
 Note that this condition uniquely determines the multiplication on $\FA$, and that $\eta_A: A \to \FA$ is a \emph{norm-continuous} \hm\ with \wstar-dense range.
\begin{notn}
It is customary to denote the first Arens product in $A^{**}$ by~$\Arp$, and 
we shall use the same symbol to denote the product in $\FA$. It is a standard result (see e.g.~\cite[Theorem 3.14]{DalLau_Beurling} or~\cite[Theorem 1.4.11]{Palmer1}) that $A$ is Arens regular if and only if $\FA_*=A^*$, in which case $\FA=A^{**}$.
\end{notn}

 Runde observed (see the proof of \cite[Theorem 4.10]{Run_DBA2}) that $\FA=(\FA_*)^*$ is actually a \emph{dual} Banach algebra.
Thus, although in general $A^{**}$ is not a dual Banach algebra, it has a canonical quotient algebra which \emph{is} a dual Banach algebra, namely $\FA$.
It was also shown in~\cite{Run_DBA2} that $\FA$ is not just a canonical dual Banach algebra associated to $A$; it is a \emph{universal} one, in the following sense.

\begin{thm}[{\cite[Theorem~4.10]{Run_DBA2}}]\label{t:free-DBA}
Let $A$ be a Banach algebra, $\fB$ a dual Banach algebra, and $f:A \to \fB$ a continuous algebra \hm. Then there exists a unique \wswscts\ linear map $h:\FA\to \fB$ such that $h\eta_A=f$. Moreover, $h$ is an algebra \hm.
\end{thm}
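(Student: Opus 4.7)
The plan is to construct $h$ by taking a pre-adjoint. The central technical point, from which everything else follows, is:

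\emph{Key Lemma.} $f^*(\fB_*) \subseteq \FA_* = \AWAPA(A^*)$.

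To prove this, fix $\phi \in \fB_*$ and compute, for $a,b \in A$:
\[
\pair{a \cdot f^*(\phi)}{b} \;=\; \pair{\phi}{f(b)f(a)} \;=\; \pair{f^*(f(a) \cdot \phi)}{b},
\]
so the left orbit map $A \to A^*$, $a \mapsto a\cdot f^*(\phi)$, factors as
$A \xrightarrow{f} \fB \xrightarrow{L_\phi} \fB_* \xrightarrow{f^*|_{\fB_*}} A^*$,
where $L_\phi(b) = b\cdot\phi$ takes values in $\fB_*$ because $\fB_*$ is a sub-bimodule of $\fB^*$. It remains to show $L_\phi$ is weakly compact. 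Since multiplication in $\fB$ is separately \wstar-continuous (Remark~\ref{r:DBA-w*cts-product}), an elementary calculation shows $L_\phi$ is $\sigma(\fB,\fB_*)$-to-$\sigma(\fB_*,\fB)$ continuous; by Banach--Alaoglu it sends the unit ball of $\fB$ to a relatively weakly compact subset of $\fB_*$. The right orbit map is handled symmetrically.

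\emph{Construction of $h$.} Using the Key Lemma, define $h \st \FA \to \fB$ by the rule
\[
\pair{h(m)}{\phi} \defeq \pair{m}{f^*(\phi)} \qquad (m \in \FA,\ \phi \in \fB_*).
\]
Since $f^*$ restricts to a bounded operator $\fB_* \to \FA_*$, the right-hand side is a bounded linear functional on $\fB_*$ and so defines an element of $\fB = (\fB_*)^*$. By construction, $h$ admits $f^*|_{\fB_*}$ as its pre-adjoint, and is therefore \wswscts. The identity $h\eta_A = f$ is a direct unwinding of definitions: $\pair{h\eta_A(a)}{\phi} = \pair{a}{f^*(\phi)} = \pair{f(a)}{\phi}$.

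\emph{Multiplicativity.} One reduces the identity $h(m\Arp n) = h(m)h(n)$ to the claim
\begin{equation}\label{eq:plan-mult}
n \cdot f^*(\phi) \;=\; f^*\bigl(h(n)\cdot\phi\bigr) \qquad (n \in \FA,\ \phi \in \fB_*)
\end{equation}
using $\pair{m\Arp n}{\psi} = \pair{m}{n\cdot\psi}$ for $\psi \in \FA_*$ on the left, and $\pair{h(m)h(n)}{\phi} = \pair{h(m)}{h(n)\cdot\phi}$ on the right. Identity \eqref{eq:plan-mult} for $n = \eta_A(a)$ is exactly the computation used above to prove the Key Lemma. To extend it to all $n \in \FA$, one notes that both sides of \eqref{eq:plan-mult}, viewed as maps $\FA \to \FA_*$, are $\sigma(\FA,\FA_*)$-to-$\sigma(\FA_*,\FA)$ continuous: on the left, because separate \wstar-continuity of the product on $\FA$ forces the orbit maps into its predual to be weak$^*$-to-weak continuous; on the right, by composing the \wswscts\ map $h$, the analogous orbit map on $\fB$, and the (weakly continuous) bounded map $f^*|_{\fB_*}$. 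Both topologies in play are Hausdorff, so \wstar-density of $\eta_A(A)$ in $\FA$ extends the equality from $A$ to $\FA$.

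\emph{Uniqueness} follows because any two \wswscts\ maps $\FA \to \fB$ agreeing on the \wstar-dense set $\eta_A(A)$ must coincide.

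I expect the Key Lemma to be the main obstacle: it is the one step that genuinely uses both the algebraic hypothesis (that $f$ is a homomorphism) and the dual-Banach-algebra structure on $\fB$, and it is what guarantees that the candidate for $h$ is well-defined. Once established, the remaining steps are routine \wstar-continuity/density arguments.
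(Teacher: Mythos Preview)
Your proof is correct. Note, however, that the paper does not give its own proof of this theorem: it is stated with attribution to Runde \cite[Theorem~4.10]{Run_DBA2} and used as a black box.

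That said, your approach matches exactly the strategy the paper itself employs elsewhere. Your Key Lemma---that the orbit map $L_\phi:\fB\to\fB_*$ is weakly compact because it is \wstar-to-weak continuous on a \wstar-compact unit ball---is precisely \cite[Proposition~4.2]{Run_DBA2}, which the paper invokes in the proof of Theorem~\ref{t:unit-for-module-adjunction}. And your construction of $h$ as the adjoint of $f^*\vert_{\fB_*}$ is the ``hands-on'' approach sketched in Remark~\ref{r:hands-on-algebra-functor} (there for $\F(f)$ between two $\F$-algebras, but the idea is the same). So while there is no proof in the paper to compare against, your argument is the natural one and is consistent with how the surrounding material is developed.
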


Although this is not observed explicitly in \cite{Run_DBA2}, $\F(\blank)$ defines a functor from $\pcat{BA}$ to $\pcat{DBA}$. We could show this directly -- see Remark~\ref{r:hands-on-algebra-functor} below -- but it can also be deduced from Theorem~\ref{t:free-DBA} by ``soft'' means, as follows.

\begin{cor}\label{c:free-functor}
Let $f:A \to B$ be a continuous algebra \hm\ between Banach algebras. Then there exists a unique \wswscts\ linear map $\F(f):\FA\to\F(B)$ making the diagram
\begin{equation}
\begin{diagram}[tight,height=2em]
  \FA	& \rTo^{\F(f)}	& \F(B)  \\
 \uTo^{\eta_A}	&		& \uTo_{\eta_B}   \\
    A		&    \rTo_f	& B  
\end{diagram}
\end{equation}
commute. Moreover, $\F(f)$ is an algebra~\hm.

The assignment $f\mapsto \F(f)$ is functorial. That is: $\F(\sid_A)=\sid_{\FA}$; and if $g:B\to C$ is another continuous algebra \hm\ between Banach algebras, then $\F(gf)=\F(g)\F(f)$.
\end{cor}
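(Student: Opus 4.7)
The plan is to deduce this corollary directly from Theorem~\ref{t:free-DBA} by applying the universal property of $\F(A)$; no further computation with the $\WAP$-machinery is needed. The main point is simply to choose the right homomorphism to feed into the universal property.

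First, I would define $\F(f)$ as follows. Given a continuous algebra homomorphism $f:A\to B$, consider the composite $\eta_B\circ f : A\to \F(B)$; this is a continuous algebra homomorphism from $A$ into a dual Banach algebra. By Theorem~\ref{t:free-DBA}, there exists a unique \wswscts\ linear map $h:\F(A)\to\F(B)$ such that $h\eta_A = \eta_B f$, and $h$ is automatically an algebra homomorphism. Set $\F(f) := h$. Commutativity of the displayed diagram and the uniqueness clause of the corollary then follow immediately from this construction.

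For functoriality, both identities will be verified by exhibiting two candidate morphisms that satisfy the same universal property and then invoking uniqueness. For $\F(\sid_A)$, observe that $\sid_{\F(A)}:\F(A)\to\F(A)$ is a \wswscts\ algebra homomorphism such that $\sid_{\F(A)}\eta_A = \eta_A = \eta_A\sid_A$; hence by uniqueness in Theorem~\ref{t:free-DBA} applied to the homomorphism $\eta_A\sid_A : A\to\F(A)$, we must have $\F(\sid_A) = \sid_{\F(A)}$. For composition, given $f:A\to B$ and $g:B\to C$, the map $\F(g)\F(f):\F(A)\to\F(C)$ is \wswscts\ (composition of such maps) and is an algebra homomorphism, and it satisfies
\[
  \F(g)\F(f)\eta_A \;=\; \F(g)\eta_B f \;=\; \eta_C g f,
\]
so it is a \wswscts\ algebra homomorphism making the diagram for $gf:A\to C$ commute. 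By uniqueness in Theorem~\ref{t:free-DBA}, $\F(gf) = \F(g)\F(f)$.

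There is no serious obstacle here; the only thing to be careful about is to apply the universal property to $\eta_B\circ f$, not to $f$, so that the target is a dual Banach algebra as required by Theorem~\ref{t:free-DBA}. Everything else is the standard ``extension by universal property is functorial'' argument, of the kind familiar from the construction of left adjoints in category theory, which is precisely the philosophy flagged in the paragraph preceding the corollary.
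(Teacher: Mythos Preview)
Your proof is correct and follows essentially the same approach as the paper: apply Theorem~\ref{t:free-DBA} to the composite $\eta_B\circ f$ to obtain $\F(f)$, then verify functoriality by invoking the uniqueness clause. The paper presents the composition step via a pair of commuting squares whose outer rectangle forces $\F(gf)=\F(g)\F(f)$, whereas you write out the chain $\F(g)\F(f)\eta_A = \eta_C gf$ directly, but these are the same argument.
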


\begin{proof}
The existence, uniqueness and \hm\ properties of $\F(f)$ follow by applying Theorem~\ref{t:free-DBA} to the \hm\ $\eta_B f: A\to \F(B)$.
Because of uniqueness, $\F(\sid_A)=\sid_{\FA}$.

If $f:A \to B$ and $g:B \to C$ are continuous \hm s between Banach algebras, then by the first part of this corollary, both squares in the following diagram commute:
\[ \begin{diagram}[tight,height=2em]
  \FA & \rTo^{\F(f)} & \F(B) & \rTo^{\F(g)} & \F(C) \\
  \uTo^{\eta_A} &  & \uTo^{\eta_B} &  & \uTo_{\eta_C} \\
  A & \rTo_f & B & \rTo_g & C 
\end{diagram} \]
Therefore the outer rectangle commutes, which by the uniqueness property of $\F(gf)$ in the commuting diagram
\[
\begin{diagram}[tight,height=2em,width=4em]
  \FA & \rTo^{\F(gf)} & \F(C)  \\
  \uTo^{\eta_A} &  & \uTo_{\eta_C}   \\
  A & \rTo_{gf} & C  
\end{diagram}
\]
implies that $\F(gf)=\F(g)\F(f)$.
\end{proof}

The proof of Corollary~\ref{c:free-functor} illustrates a general category-theoretic procedure. In the language of adjunctions between categories, $\F:\pcat{BA}\to\pcat{DBA}$ is \dt{left adjoint} to the forgetful functor $\pcat{DBA}\to\pcat{BA}$. A basic result in category theory tells us that to find left adjoint functors, it suffices to construct universal objects as done in Theorem~\ref{t:free-DBA}; functoriality then follows \emph{automatically} from the universal property, by a general version of the arguments used in proving Corollary~\ref{c:free-functor}. 

\begin{rem}\label{r:hands-on-algebra-functor}
Given a norm-continuous ~\hm\ $f: A \to B$,
one can define $\F(f)$ and prove its functorial nature more directly, without the machinery of adjunctions. Standard properties of weakly compact maps imply that $f^*(\F(B)_*)\subseteq \FA_*$\/, and then the adjoint of $f^*\vert_{\F(B)_*}$ is our desired map $\F(f)$. Checking that $\F(f)$ is a \hm, and that $\F(g f) = \F(g)\F(f)$, is slightly tedious but routine.
\end{rem}

\smallskip
Returning to the definition of $\FAX$ when $X$ is an arbitrary $A$-bimodule: it is important for our intended applications that $\FAX$ is not just a dual $A$-bimodule, but is in fact a \emph{normal dual $\FA$-bimodule}.
Instead of giving a direct proof, we prefer to obtain this result via the technique of triangular Banach algebras, as discussed in the next section.
\end{section}

\begin{section}{Triangular Banach algebras, modules, and derivations}
\label{s:triangular}
The following construction, which is a natural Banach-algebraic analogue of classical ideas in ring theory, appears to have been reinvented independently on several occasions.
Let $A$ be a Banach algebra and $X$ a \emph{contractive} $A$-bimodule.
We write $A\oplus_1 X$ for the $\ell^1$-sum of $A$ and $X$, and define an associative binary product on $A\oplus_1 X$ by
\begin{equation}\label{eq:define-product}
(a,x)\cdot(b,y) \defeq (ab, a\cdot y+x\cdot b) \qquad(a,b\in A, x,y\in X).
\end{equation}
Equipped with this norm and this product, $A\oplus_1 X$ becomes a Banach algebra, which we denote by~$\TAX$ and call the \dt{triangular Banach algebra} associated to~$(A,X)$. Our notation is chosen in analogy with the corresponding semidirect product construction for groups, and our terminology is chosen since one can interpret this product as given by multiplication of certain upper-triangular matrices, viz.
\[ \tmat{a}{x}\tmat{b}{y}= \tmat{ab}{a\cdot y+x\cdot b} \]
Other authors have referred to $\TAX$ as a ``module extension Banach algebra''; in older work \cite{BEJ68_wedderburn} it is called a ``strongly decomposable, topological extension of $A$ by $X$''.

\begin{rem}\label{r:why-contractive}
We have restricted to contractive bimodules to ensure that the norm on $\TAX$ is submultiplicative, so that we get a ``genuine'' Banach algebra -- this is not mentioned explicitly in \cite{Gou}, but is also needed there. If we wish to make the same construction for bimodules that are not contractive, there seem to be two reasonable options.
One could equip the Banach space $A\oplus_1 X$ with an equivalent norm for which multiplication \emph{is} submultiplicative, but the choice would be far from canonical. Alternatively one could work throughout not with Banach algebras per se, but with Banach spaces equipped with an associative product that is separately (hence jointly) norm-continuous.
\end{rem}

The second option mentioned in Remark~\ref{r:why-contractive} is arguably the more natural one, since when dealing with dual Banach algebras and weak compactness, what matters is not the norm but the topology. However, this alternative definition\footnotemark\ of a ``Banach algebra'' is less standard in the literature, so we would face difficulties in quoting results we need, and would have to repeatedly make trivial adaptations.
\footnotetext{The first author would like to thank N. Weaver (personal communication) for pointing out that \emph{this notion} was originally taken as the definition of a ``normed ring'' in the seminal work of I.~M.~Gel'fand.}
 Therefore we do not pursue this here, and have preferred to work with contractive bimodules whenever we can, just to keep the exposition cleaner. Note that a dual Banach space, equipped with an associative product that is separately \wstar-continuous, can be renormed to give a genuine dual Banach algebra in the usual sense; see, for instance, \cite[Proposition~2.1]{Daws_DBA2}.

The algebra $\TAX$ comes with two canonical maps $\imath: A\to \TAX$ and $q: \TAX\to A$, defined by
\begin{equation}\label{eq:q-and-i}
\imath(a)=(a,0) \quad\text{and}\quad q(a,x)=a\,.
\end{equation}
Clearly both maps are algebra {\hm}s.
Also, given a linear map $D:A \to X$, we define $\theta_D: A\to \TAX$ by
\begin{equation}\label{eq:section-derivation}
\theta_D(a) = (a, D(a)) \qquad(a\in A).
\end{equation}

\begin{lem}\label{l:derivation-via-HM}
Let $D:A\to X$ be a linear map. Then $D$ is a derivation if and only if $\theta_D:A \to \TAX$ is an algebra \hm. Moreover, each \hm\ $\theta: A\to \TAX$ satisfying $q\theta=\sid$ is of the form $\theta_D$ for some derivation $D:A\to X$.
\end{lem}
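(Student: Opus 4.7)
The proof is an unpacking of the definitions followed by a direct comparison. The two assertions are essentially the same calculation viewed from different angles, so I would organize the argument as follows.

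First I would prove the ``iff'' by computing both sides of the homomorphism identity for $\theta_D$. Using the definition of the product in $\TAX$ from \eqref{eq:define-product}, one immediately gets
\[
\theta_D(a)\cdot\theta_D(b) = (a,D(a))\cdot(b,D(b)) = \bigl(ab,\; a\cdot D(b) + D(a)\cdot b\bigr),
\]
while $\theta_D(ab) = (ab, D(ab))$. Since $\theta_D$ is clearly linear and bounded (its norm is at most $1+\|D\|$), the map $\theta_D$ is a homomorphism if and only if the second coordinates agree for all $a,b\in A$, i.e.\ if and only if $D(ab)=a\cdot D(b)+D(a)\cdot b$, which is precisely the derivation identity.

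For the second assertion, I would start with an arbitrary algebra homomorphism $\theta:A\to \TAX$ satisfying $q\theta=\sid$. Writing $\theta(a)=(\theta_1(a),\theta_2(a))$ with $\theta_1(a)\in A$ and $\theta_2(a)\in X$, the condition $q\theta=\sid$ forces $\theta_1(a)=a$, so $\theta(a)=(a,D(a))$ where I set $D\defeq\theta_2:A\to X$. Linearity and norm-continuity of $D$ are inherited from those of $\theta$ (indeed $D$ is the composition of $\theta$ with the continuous linear projection $(a,x)\mapsto x$). Applying the first part of the lemma to this $D$, and using the hypothesis that $\theta=\theta_D$ is a homomorphism, we conclude that $D$ is a derivation.

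I expect no genuine obstacle here: the whole statement is a routine bookkeeping verification which motivates the definition of $\TAX$ in the first place. The only minor points to be careful about are recording that $\theta_D$ is bounded (so that it qualifies as a morphism in the category of Banach algebras) and noting that the splitting $\theta(a)=(a,D(a))$ is automatic from $q\theta=\sid$, so no choice is involved.
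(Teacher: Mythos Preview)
Your proof is correct and is precisely the routine calculation the paper has in mind: the authors omit the proof entirely, calling it ``a simple calculation'' and noting that the correspondence is folklore going back to Hochschild. Your write-up supplies exactly this omitted verification, so there is nothing to compare.
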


This correspondence between derivations and splittings of square-zero singular extensions goes back to the original work of G.~Hochschild on his eponymous cohomology groups, if not further. In the context of Banach algebras it seems to be folklore.
The proof is a simple calculation which we omit. 

\begin{rem}[A wider categorical perspective]\label{r:cat-slogan}
As $\ker(q)$ is an ideal in $\TAX$, it is naturally a contractive $\TAX$-bimodule, and hence is a contractive $A$-bimodule via the \hm\ $\imath: A\to \TAX$. In this way we may identify $X$ and $\ker(q)$ as $A$-bimodules.
Moreover, $A$-bimodule maps $X\to Y$ correspond naturally to algebra {\hm}s $\TAX\to \TR{A}{Y}$ which fix the embedded copy of $A$, while derivations $A\to X$ correspond naturally to {\hm}s $A\to \TAX$ which split the quotient map~$q$.
All this gives rise to a slogan which we wish to highlight:
\begin{quote}
One can recover the notion of ``module'' from an appropriate notion of ``split extension of algebras''; the notion of ``module morphism'' from an appropriate notion of morphism between such extensions; and the notion of ``derivation'' from an appropriate notion of splitting for such an extension.
\end{quote}
This slogan could be made much more precise using additional category-theoretic language (a readable exposition can be found in \cite[\S6.1]{Barr_AMbook}), but we will avoid this to keep the present account more focused. Nevertheless, the slogan strongly suggests that if we take the category \pcat{DBA} seriously, then we should be led naturally to consider a corresponding class of modules, module morphisms, and derivations. Moreover, the functor $\F: \pcat{BA}\to \pcat{DBA}$ should relate modules, module morphisms and derivations for a Banach algebra $A$ to corresponding notions for the dual Banach algebra $\FA$. (This line of thought is what originally led us to Proposition~\ref{p:normalmod-via-DBA} and Theorem~\ref{t:FT=TF}; the definition of $\FAX$ came later.)
\end{rem}

\medskip
We turn now to triangular Banach algebras associated to dual Banach algebras.
Given a dual Banach algebra $\fB=(\fB_*)^*$ and a dual $\fB$-bimodule $M=(M_*)^*$, there is an obvious choice of predual for $\TR{\fB}{M}$ at the level of Banach spaces, namely
\begin{equation}\label{eq:predual-of-triang}
(\TR{\fB}{M})_*\defeq \fB_*\oplus_\infty M_* \subseteq \fB^*\oplus_\infty M^*
 = (\TR{\fB}{M})^*\,.
\end{equation}
In general, however, \eqref{eq:predual-of-triang} will not be enough to make $\TR{\fB}{M}$ into a dual Banach algebra. The correct result is given in the following proposition, which should be compared with the ``slogan'' given in Remark~\ref{r:cat-slogan}. It provides further evidence that, when working with modules over a dual Banach algebra, the class of normal dual bimodules is the right one to consider.

\begin{prop}\label{p:normalmod-via-DBA}
Let $\fB=(\fB_*)^*$ be a dual Banach algebra, and let $M$ be a $\fB$-bimodule which is the dual of some Banach space $M_*$. Then the following are equivalent:
\begin{YCnum}
\item\label{li:normal-corner}
 $M$ is a normal dual $\fB$-bimodule, with predual $M_*$\/;
\item\label{li:triangular-DBA}
 $\TR{\fB}{M}$ is a dual Banach algebra, with predual $(\TR{\fB}{M})_*$\/.
\end{YCnum}
\end{prop}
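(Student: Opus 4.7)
The plan is to invoke Remark~\ref{r:DBA-w*cts-product} to reduce both sides of the equivalence to statements about separate \wstar-continuity of maps, after which one essentially just unpacks the definition of the triangular product. At the level of Banach spaces, $\TR{\fB}{M}=\fB\oplus_1 M$ is isometrically the dual of $(\TR{\fB}{M})_*=\fB_*\oplus_\infty M_*$, with the \wstar-topology on $\TR{\fB}{M}$ being the product of the \wstar-topologies on $\fB$ and $M$. By Remark~\ref{r:DBA-w*cts-product}, condition~\ref{li:triangular-DBA} is equivalent to the product of $\TR{\fB}{M}$ being separately $\sigma(\TR{\fB}{M},(\TR{\fB}{M})_*)$-continuous.

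Next I would fix $(b,y)\in\TR{\fB}{M}$ and analyze when the map
\[
(a,x)\longmapsto (a,x)\cdot(b,y)= (ab,\,a\cdot y+x\cdot b)
\]
is \wswscts. Since the \wstar-topology on $\TR{\fB}{M}$ is the product topology, this amounts to checking \wstar-continuity coordinatewise in the target, and then in each coordinate of the source separately. The first coordinate $(a,x)\mapsto ab$ depends only on $a$, and is \wstar-continuous because $\fB$ is a dual Banach algebra. The second coordinate splits as the sum of $(a,x)\mapsto a\cdot y$ and $(a,x)\mapsto x\cdot b$. The first summand depends only on $a$ and is precisely the left orbit map $R^{\fB}_y$; the second depends only on $x$ and is, on the other hand, the action map $L^b_M:M\to M$, $x\mapsto x\cdot b$, which is the \wstar-adjoint of right multiplication by $b$ on $M_*$ and hence \emph{always} \wstar-continuous just from the fact that $M$ is a dual $\fB$-bimodule.

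The analogous analysis for the other variable shows that separate \wstar-continuity of multiplication on $\TR{\fB}{M}$ is equivalent to requiring, for every $y\in M$, that $R^{\fB}_y:\fB\to M$ be \wswscts, and for every $x\in M$, that $L^{\fB}_x:\fB\to M$ be \wswscts\ --- i.e.\ precisely condition~\ref{li:normal-corner}. This yields both directions of the equivalence. I expect no real obstacle beyond bookkeeping; the only subtlety worth flagging is the distinction between the action maps $M\to M$ (which are automatically \wstar-continuous once $M$ is realized as a dual bimodule) and the orbit maps $\fB\to M$ (whose \wstar-continuity is the content of normality and is genuinely needed to make the triangular algebra a dual Banach algebra).
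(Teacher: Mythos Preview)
Your route via Remark~\ref{r:DBA-w*cts-product} and separate \wstar-continuity is sound in outline, and the paper even notes in the remark after its proof that this approach would have been the more ``natural'' one; the paper instead works directly with the predual-as-sub-bimodule characterization of a dual Banach algebra, computing explicitly that $(b,y)\cdot(\phi,\psi)$ and $(\phi,\psi)\cdot(b,y)$ land in $\fB_*\oplus M_*$.

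However, there is a genuine gap in your write-up. The standing hypothesis is only that $M$ is a $\fB$-bimodule which \emph{as a Banach space} is the dual of $M_*$; it is \emph{not} assumed that $M$ is a dual $\fB$-bimodule, i.e.\ that $M_*$ is a sub-$\fB$-bimodule of $M^*$. That property is part of condition~\ref{li:normal-corner}, not of the ambient setup. When you write that $x\mapsto x\cdot b$ ``is the \wstar-adjoint of right multiplication by $b$ on $M_*$ and hence \emph{always} \wstar-continuous just from the fact that $M$ is a dual $\fB$-bimodule'', you are invoking exactly what needs to be proved in the direction \ref{li:triangular-DBA}$\implies$\ref{li:normal-corner}. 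Your concluding equivalence --- that separate \wstar-continuity of the triangular product amounts precisely to \wstar-continuity of the orbit maps $R^\fB_y$ and $L^\fB_x$ --- is therefore not quite right as stated: it also requires \wstar-continuity of the action maps $x\mapsto b\cdot x$ and $x\mapsto x\cdot b$, which is the dual-bimodule condition and is part of~\ref{li:normal-corner}.

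The fix is easy and fits your framework: in the direction \ref{li:triangular-DBA}$\implies$\ref{li:normal-corner}, take $y=0$ in your analysis of right multiplication by $(b,y)$. Separate \wstar-continuity then forces $x\mapsto x\cdot b$ to be \wstar-continuous on $M$, which is equivalent to $b\cdot M_*\subseteq M_*$; the symmetric argument gives $M_*\cdot b\subseteq M_*$. This establishes that $M_*$ is a sub-bimodule of $M^*$, after which your orbit-map argument goes through. The paper's proof makes exactly this two-step structure explicit: first show $M_*$ is a sub-bimodule, then show normality.
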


After the present work was done, we found that the implication \ref{li:normal-corner}$\implies$\ref{li:triangular-DBA} has been obtained independently in \cite{bloodycomedians}, where the authors chose to omit the proof as being obvious. For sake of completeness we will give a full proof of the proposition, taking an algebraic approach rather than an argument with approximating nets.

\begin{proof}
Suppose $\TR{\fB}{M}=((\TR{\fB}{M})_*)^*$ is a dual Banach algebra, i.e. that $\fB_*\oplus_\infty M_* = (\TR{\fB}{M})_*$ is a $\TR{\fB}{M}$-submodule of $\fB^*\oplus_\infty M^* = (\TR{\fB}{M})^*$.
Let $\psi\in M_*$\/, $a\in \fB$. Then $(0,\psi)\cdot(a,0)\in \fB^*\oplus M^*$\/; by assumption it lies in $\fB_*\oplus M_*$\., and a quick calculation shows it annihilates all elements of the form $(b,0)$ where $b\in \fB$, so lies in $\{0\}\oplus M_*$\/. Since
\[ \pair{\psi\cdot a}{y} = \psi(a\cdot y) = \pair{(0,\psi)\cdot(a,0)}{(0,y)} 
 \qquad(y\in M),
\]
we see that $\psi\cdot a \in M_*$. A similar argument, with left and right reversed, shows that $a\cdot\psi\in M_*$. Thus $M_*$ is a sub-$\fB$-bimodule of $M^*$, making $M=(M_*)^*$ a dual $\fB$-bimodule.

It remains to prove $M$ is normal. Fix $y\in M$, and define $R^\fB_y: \fB \to M$ by $R^\fB_y(a)=a\cdot y$.
Given $\psi\in M_*$, we know $(0,\psi)\in \fB_*\oplus M_*$, so $(0,y)\cdot(0,\psi)\in \fB_*\oplus M_*$\/. Moreover, since
\[ \pair{(0,y)\cdot(0,\psi)}{(0,x)} = 0 \quad\text{for all $x\in M$} \]
this implies that $(0,y)\cdot(0,\psi)= (\phi,0)$ for some $\phi\in \fB_*$\/.
As
\[ \phi(a) = \pair{(0,y)\cdot(0,\psi)}{(a, 0)} = \psi(a\cdot y) = \pair{(R^\fB_y)^*(\psi)}{a}, \] 
this shows that $(R^\fB_y)^*(\psi)\in \fB_*$. Hence $(R^\fB_y)^*(M_*)\subseteq \fB_*$, and so $R^\fB_y$ is \wswscts.
A similar argument shows that the map $a\mapsto y\cdot a$ is \wswscts\ from $\fB\to M$.
Thus $M$ is normal.

Conversely, suppose $M$ is a normal dual bimodule, and fix $(\phi,\psi)\in \fB_*\oplus M_*$.
Given $(b,y)\in \fB\oplus M$, we have
\[ \begin{aligned}
\pair{(b,y)\cdot(\phi,\psi)}{(a,x)}
  = \phi(ab)+ \psi(a\cdot y)+\psi(x\cdot b) 
  = \pair{(b\cdot\phi+ (R^\fB_y)^*(\psi), b\cdot\psi)}{(a,x)}
\end{aligned} \]
for every $(a,x)\in \fB\oplus M$, where $R^\fB_y:\fB\to M$ is as defined above.
Since $\fB_*$ is a left $\fB$-submodule of $\fB^*$, we have $b\cdot\phi \in \fB_*$; since $M_*$ is a left $\fB$-submodule of $M^*$, we have $b\cdot\psi\in M_*$; and since $M$ is a \emph{normal dual} (left) $\fB$-module, we have $(R^\fB_y)^*(\psi)\in \fB_*$.
This shows that $(b,y)\cdot(\phi,\psi)\in \fB_*\oplus M_*$, so that $(\TR{\fB}{M})_*$ is a left $\TR{\fB}{M}$-submodule of $(\TR{\fB}{M})^*$.
By an exactly similar but left-right-reversed argument, one can show that
$(\phi,\psi)\cdot(b,y) \in \fB_*\oplus M_*$\/, so that $(\TR{\fB}{M})_*$ is a right $\TR{\fB}{M}$-submodule of $(\TR{\fB}{M})^*$.
Thus $\TR{\fB}{M}$ is a dual Banach algebra with predual $(\TR{\fB}{M})_*$\/.
\end{proof}

\begin{rem}
In proving this theorem, it might have been more natural (in the informal sense) to use the characterization of dual Banach algebras in terms of separately \wstar-continuous multiplication. We chose to use the original description in terms of predual modules, since the resulting argument is more in keeping with our algebraic approach, and suggests possible generalizations to other situations.
On the other hand, in the next section, particularly in the proof of Theorem~\ref{t:FT=TF}, it will be convenient to have both descriptions in mind.
\end{rem}

\end{section}

\begin{section}{Creating contractive, normal dual $\FA$-modules}
\label{s:free-ndmod}
Given a Banach algebra $A$ and an $A$-bimodule $X$, we are seeking some kind of universal, normal dual, $\FA$-bimodule associated to $X$. Our candidate is $\FAX$, but it is not immediately clear that this has all the required properties.
For instance, although it is easy to check that $\FAX_*$ is a sub $A$-bimodule of $X^*$\/, it is not immediate that it becomes an $\FA$-bimodule.
However, if $X$ is \emph{contractive} we can form the Banach algebra $\TAX$ and then by Theorem~\ref{t:free-DBA} we can form the universal enveloping dual Banach algebra $\F(\TAX)$. Our strategy is to recover $\FAX$ as the `corner' of this algebra, and then appeal to Proposition~\ref{p:normalmod-via-DBA}.
As mentioned in the introduction, this approach is similar to, and motivated by, that of Gourdeau to putting an $A^{**}$-bimodule structure on $X^{**}$, see~\cite{Gou}.

Let $V$ and $W$ be Banach spaces and let $\beta: V\times W\to\Cplx$ be a bilinear form. It follows from Gantmacher's theorem (or the Grothendieck double limit criterion) that the map $V\to W^*$, $x\mapsto \beta(x,\cdot)$, is weakly compact if and only if the map $W\to V^*$, $y\mapsto \beta(\cdot,y)$ is weakly compact. If this is the case, we shall say that $\beta$ is a \dt{weakly compact bilinear form} (this is also the terminology used in e.g.~\cite{Ulg_PAMS87}).
If $\beta:V\times W\to \Cplx$ is a weakly compact bilinear form, $X$ and $Y$ are Banach spaces, and $S:X\to V$, $T:Y\to W$ are bounded linear, then by standard properties of weakly compact operators the bilinear form $\beta\circ(S\times T): X\times Y \to \Cplx$ is also weakly compact.

If $N$ is an $A$-bimodule and $\psi\in N^*$, we define $L_\psi: A\times N\to\Cplx$ and  $R_\psi: N\times A\to\Cplx$ by 
\[
 L_\psi(a,x) = \pair{\psi}{a\cdot x} \;,\;
 R_\psi(x,a) = \pair{\psi}{x\cdot a}
\]
The previous remarks now imply the following result.

\begin{lem}\label{l:wap-bilin}
Let $M=(M_*)^*$ be a dual $A$-bimodule, and let $\psi\in M$. Then $\psi\in\AWAPA(M)$ if and only if $L_\psi$ and $R_\psi$ are weakly compact bilinear forms on $A\times M_*$ and $M_*\times A$ respectively.
\end{lem}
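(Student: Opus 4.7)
The proof is essentially a matter of unravelling definitions and applying the Gantmacher-based equivalence for weakly compact bilinear forms that was stated immediately before the lemma. The key observation is that the two partial functionals obtained from $L_\psi$ and $R_\psi$ by fixing an element of $A$ are precisely the two orbit maps whose weak compactness defines membership in $\AWAPA(M)$.

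More concretely, I would start by recalling that since $M=(M_*)^*$ is a dual $A$-bimodule, the bimodule actions on $M$ are adjoint to those on $M_*$: for $\psi \in M$, $x \in M_*$, $a \in A$ we have $\pair{a\cdot\psi}{x}=\pair{\psi}{x\cdot a}$ and $\pair{\psi\cdot a}{x}=\pair{\psi}{a\cdot x}$. Using this, for fixed $a\in A$ the linear functional $L_\psi(a,\cdot):M_* \to \Cplx$, $x\mapsto \pair{\psi}{a\cdot x}=\pair{\psi\cdot a}{x}$, is nothing but the element $\psi\cdot a \in (M_*)^*=M$. Hence the induced linear map $A\to M$, $a \mapsto L_\psi(a,\cdot)$, coincides with the right orbit map $L^A_\psi$. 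A symmetric computation shows that the induced map $A\to M$, $a\mapsto R_\psi(\cdot,a)$, coincides with the left orbit map $R^A_\psi$.

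Now I would apply the criterion recalled immediately before the lemma: the bilinear form $L_\psi$ on $A\times M_*$ is weakly compact if and only if the induced map $A \to (M_*)^* = M$, $a\mapsto L_\psi(a,\cdot)$, is weakly compact, i.e.\ if and only if $L^A_\psi$ is weakly compact. The analogous statement holds for $R_\psi$ and $R^A_\psi$. By definition, $\psi\in \AWAPA(M)$ exactly when both orbit maps $L^A_\psi$ and $R^A_\psi$ are weakly compact, which is now equivalent to $L_\psi$ and $R_\psi$ being weakly compact bilinear forms. This finishes the proof; since the argument is pure bookkeeping sitting on top of the Gantmacher criterion, there is no substantial obstacle.
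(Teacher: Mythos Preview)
Your proof is correct and is exactly the intended argument: the paper's own proof consists of the single sentence ``The previous remarks now imply the following result,'' and what you have written is precisely the unravelling of those remarks, identifying the partial maps $a\mapsto L_\psi(a,\cdot)$ and $a\mapsto R_\psi(\cdot,a)$ with the orbit maps $L^A_\psi$ and $R^A_\psi$ and then invoking the Gantmacher criterion.
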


\begin{prop}\label{p:free-of-triang-as-dual-space}
Let $X$ be a \emph{contractive} $A$-bimodule.
If we identify $(\TAX)^*$ with $A^*\oplus_\infty X^*$, the subspace $\F(\TAX)_*$ is identified with $\FA_*\oplus_\infty \FAX_*$. 
Consequently, $\F(\TAX)$ can be identified as a dual Banach space with $\FA\oplus_1 \FAX$.
\end{prop}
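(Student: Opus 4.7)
The plan is to unwind both sides in terms of weak compactness of orbit maps and match them coordinate by coordinate. Identify $(\TAX)^*$ with $A^*\oplus_\infty X^*$ via the pairing $\langle(\phi,\sigma),(a,x)\rangle=\phi(a)+\sigma(x)$. A direct computation from \eqref{eq:define-product}, using the standard dual-bimodule conventions $(a\cdot\phi)(b)=\phi(ba)$ on $A^*$ and $(a\cdot\sigma)(y)=\sigma(y\cdot a)$, $(\sigma\cdot a)(y)=\sigma(a\cdot y)$ on $X^*$, yields
\[
(a,x)\cdot(\phi,\sigma)=\bigl(a\cdot\phi+\sigma\circ R^A_x,\;a\cdot\sigma\bigr),\quad
(\phi,\sigma)\cdot(a,x)=\bigl(\phi\cdot a+\sigma\circ L^A_x,\;\sigma\cdot a\bigr),
\]
where $R^A_x\st A\to X$, $b\mapsto b\cdot x$ and $L^A_x\st A\to X$, $b\mapsto x\cdot b$ are the orbit maps of $x\in X$. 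Since weak compactness of a linear map into an $\ell^\infty$-direct sum is equivalent to weak compactness of each coordinate projection, the question reduces to controlling four scalar coordinates.

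For the inclusion $\FA_*\oplus_\infty\FAX_*\subseteq\F(\TAX)_*$, suppose $\phi\in\FA_*$ and $\sigma\in\FAX_*$. Applying Lemma~\ref{l:wap-bilin} to the dual bimodule $X^*$ with predual $X$ shows that the bilinear forms $L_\sigma$ and $R_\sigma$ are weakly compact; in the partial-evaluation guise, this precisely says that $x\mapsto\sigma\circ R^A_x$ and $x\mapsto\sigma\circ L^A_x$ are weakly compact maps $X\to A^*$, and also that $a\mapsto a\cdot\sigma$, $a\mapsto\sigma\cdot a$ are weakly compact $A\to X^*$. From $\phi\in\FA_*$, the maps $a\mapsto a\cdot\phi$ and $a\mapsto\phi\cdot a$ are weakly compact $A\to A^*$. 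Plugging these into the formulas above and using that a sum of two weakly compact maps is weakly compact, each coordinate of the left and right orbit map of $(\phi,\sigma)$ is weakly compact, and hence $(\phi,\sigma)\in\F(\TAX)_*$.

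For the reverse inclusion, assume $(\phi,\sigma)\in\F(\TAX)_*$ and restrict its left orbit map along the isometric embedding $\imath\st A\hookrightarrow\TAX$. Setting $x=0$ in the formula gives $a\mapsto(a\cdot\phi,\,a\cdot\sigma)$; weak compactness of this map into $A^*\oplus_\infty X^*$ forces weak compactness of both coordinates. Doing the same to the right orbit map supplies weak compactness of $a\mapsto\phi\cdot a$ and $a\mapsto\sigma\cdot a$. Hence $\phi\in\FA_*$ and $\sigma\in\FAX_*$. Combining both inclusions gives the claimed identification $\F(\TAX)_*=\FA_*\oplus_\infty\FAX_*$, and the ``consequently'' statement then follows by dualising: the Banach-space dual of $\FA_*\oplus_\infty\FAX_*$ is $\FA\oplus_1\FAX$.

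There is no serious analytic obstacle. The only subtlety is clerical: one must set up the $\TAX$-bimodule action on $(\TAX)^*$ carefully enough to recognise that the off-diagonal term $\sigma\circ R^A_x$ appearing in the $A^*$-coordinate of the left orbit map is precisely a partial evaluation of the bilinear form $L_\sigma$ controlled by Lemma~\ref{l:wap-bilin}. Once that identification is made, the proof is pure book-keeping.
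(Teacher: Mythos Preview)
Your proof is correct and follows essentially the same approach as the paper. The paper packages the computation as a single bilinear form $\Psi\pi$ on $T\times T$ (noting that for $T^*$ viewed as a $T$-bimodule, both orbit bilinear forms of Lemma~\ref{l:wap-bilin} coincide with $\Psi\pi$), obtaining the decomposition $\Psi\pi((a_1,x_1),(a_2,x_2))=\phi(a_1a_2)+L_\psi(a_1,x_2)+R_\psi(x_1,a_2)$ and then restricting to $A\times A$, $A\times X$, $X\times A$; you instead compute the two orbit maps in $T^*$ explicitly and restrict along $\imath:A\hookrightarrow T$, but the content and the use of Lemma~\ref{l:wap-bilin} are identical.
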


\begin{proof}
For this proof, let $T$ denote $\TAX$.
Let $\pi: T \times T \to T$ be the product map.
By Lemma~\ref{l:wap-bilin}, it is enough to identify those $\Psi\in T^*$ such that $\Psi\pi$ is a weakly compact bilinear form on~$T$.
Write $\Psi=(\phi,\psi)$ where $\phi\in A^*$ and $\psi\in X^*$. Let $a_1,a_2\in A$ and $x_1,x_2\in X$; then
\begin{equation}\label{eq:WAP}
\begin{aligned}
\Psi\pi((a_1,x_1), (a_2,x_2))
 & = \Psi( a_1a_2, a_1\cdot x_2 + x_1\cdot a_2 ) \\
 & = \phi(a_1a_2) + \psi(a_1\cdot x_2) + \psi(x_1\cdot a_2) \\
 & = \phi(a_1a_2) + L_\psi(a_1, x_2) + R_\psi(a_2,x_1) .
 \end{aligned}
\end{equation}

It is clear from \eqref{eq:WAP} that if $\phi\in \AWAPA(A^*)$ and $\psi\in\AWAPA(X^*)$ then the bilinear form $\Psi\pi$ is weakly compact.
Conversely, suppose that $\Psi\pi$ is a weakly compact bilinear form. Then so is its restriction to $A\times X\subseteq T\times T$, and therefore the bilinear form $L_\psi$ is weakly compact; similarly, the bilinear form $R_\psi$ is weakly compact. Hence by Lemma~\ref{l:wap-bilin}, $\psi\in \AWAPA(X^*)=\FAX_*\/$. The restriction of $\Psi\pi$ to $A\times A$ must also be a weakly compact bilinear form, and therefore by \eqref{eq:WAP} the bilinear form $(a_1,a_2)\mapsto\phi(a_1a_2)$ is weakly compact, so that $\phi\in\AWAPA(A^*)=\FA_*$\/.
\end{proof}

We now come to the main theorems of this section. At this point it is convenient to introduce the following short-hand notation, which will also be used later in the paper. If $X$ is an $A$-bimodule, and $\eta_X : X \to \FAX$ is the canonical map (which need not be injective), then we write $\bx$ instead of $\eta_X(x)$. This just makes various formulas or chains of equations more legible.

\begin{thm}[Creation of normal bimodules]\label{t:FT=TF}
Let $X$ be a \emph{contractive} $A$-bimodule. Given $\tilx\in\FAX$ and $\tila\in \FA$, there exist unique elements $\tila\cdot\tilx$ and $\tilx\cdot\tila$ in $\FAX$ which satisfy
\[ (\tila,0)\Arp(0,\tilx) = (0,\tila\cdot\tilx)
\text{ and }
 (0,\tilx)\Arp(\tila,0) = (0,\tilx\cdot\tila) \quad\text{in $\F(\TAX)$.} \]
The operations $(\tila,\tilx)\mapsto \tila\cdot\tilx$ and
$(\tilx,\tila)\mapsto \tilx\cdot\tila$ make $\FAX$ into a $\FA$-bimodule.
Moreover
\begin{itemize}
\item we have $\ba\cdot \bx = \bre{a\cdot x}$ and $\bx\cdot\ba=\bre{x\cdot a}$ for all $a\in A$, $x\in X$;
\item $\FAX$ is a normal dual $\FA$-bimodule, with predual $\FAX_*=\AWAPA(X^*)$;
\item $\F(\TAX) = \TR{\FA}{\FAX}$ as dual Banach algebras.
\end{itemize}
\end{thm}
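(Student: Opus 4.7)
My approach is to exploit Proposition~\ref{p:free-of-triang-as-dual-space}, which already tells us that $\F(\TAX)$ coincides with $\FA\oplus_1\FAX$ as a dual Banach space; what remains is to understand the multiplication $\Arp$ of $\F(\TAX)$ in terms of this decomposition, and use this to transport the bimodule structure onto $\FAX$. The whole theorem will then be essentially a consequence of Proposition~\ref{p:normalmod-via-DBA} applied backwards: the normality of $\FAX$ will be extracted from the already-known fact that $\F(\TAX)$ is a dual Banach algebra.

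\textbf{Identifying the inclusion and quotient functorially.} The canonical maps $\imath\colon A\to\TAX$, $a\mapsto(a,0)$ and $q\colon\TAX\to A$, $(a,x)\mapsto a$ from \eqref{eq:q-and-i} are algebra homomorphisms with $q\imath=\sid_A$. Applying Corollary~\ref{c:free-functor}, I get \wswscts{} homomorphisms $\F(\imath)\colon\FA\to\F(\TAX)$ and $\F(q)\colon\F(\TAX)\to\FA$ with $\F(q)\F(\imath)=\sid_{\FA}$. Under the identification of Proposition~\ref{p:free-of-triang-as-dual-space}, a direct dual-pairing computation shows that $\eta_{\TAX}(a,x)=(\ba,\bx)$, so $\F(\imath)(\ba)=(\ba,0)$ and $\F(q)(\ba,\bx)=\ba$ on the \wstar-dense range of $\eta_A$; by \wswscts{} extension, $\F(\imath)(\tila)=(\tila,0)$ and $\F(q)(\tila,\tilx)=\tila$ everywhere. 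Applying $\F(q)$ to $(\tila,0)\Arp(0,\tilx)$ kills the first coordinate, so I may \emph{define} $\tila\cdot\tilx\in\FAX$ to be the second; this is clearly unique. Similarly for $\tilx\cdot\tila$.

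\textbf{The bimodule axioms and the module-extension identity.} Bimodule associativity, unital behavior and bilinearity all follow from the corresponding properties of $\Arp$ inside $\F(\TAX)$; for instance $(\tila\Arp\tilb)\cdot\tilx=\tila\cdot(\tilb\cdot\tilx)$ comes from $\bigl((\tila,0)\Arp(\tilb,0)\bigr)\Arp(0,\tilx)=(\tila,0)\Arp\bigl((\tilb,0)\Arp(0,\tilx)\bigr)$ together with $\F(\imath)$ being a homomorphism. The formulas $\ba\cdot\bx=\bre{a\cdot x}$ and $\bx\cdot\ba=\bre{x\cdot a}$ come from applying $\eta_{\TAX}$ to the identities $(a,0)\cdot(0,x)=(0,a\cdot x)$ and $(0,x)\cdot(a,0)=(0,x\cdot a)$ in $\TAX$. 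To recognize $\F(\TAX)$ as $\TR{\FA}{\FAX}$ I must additionally verify $(0,\tilx)\Arp(0,\tily)=0$; this is the one spot requiring genuine work. Since $(0,x)\cdot(0,y)=0$ in $\TAX$ for $x,y\in X$, the vanishing holds on $\eta_X(X)\times\eta_X(X)$; since $\FAX_*\subseteq X^*$ separates points, $\eta_X(X)$ is \wstar-dense in $\FAX$, and because $\F(\TAX)$ is a dual Banach algebra its multiplication is separately \wstar-continuous (Remark~\ref{r:DBA-w*cts-product}), so the identity extends to all of $\FAX\times\FAX$. Expanding a general product then gives
\[(\tila,\tilx)\Arp(\tilb,\tily)=\bigl(\tila\Arp\tilb,\;\tila\cdot\tily+\tilx\cdot\tilb\bigr),\]
which is exactly the triangular product.

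\textbf{Normality, and the main obstacle.} Combining the previous step with Proposition~\ref{p:free-of-triang-as-dual-space}, $\TR{\FA}{\FAX}=\F(\TAX)$ is a dual Banach algebra with predual $\FA_*\oplus_\infty\FAX_*$. Proposition~\ref{p:normalmod-via-DBA}, applied in the direction \ref{li:triangular-DBA}$\Rightarrow$\ref{li:normal-corner}, then delivers immediately that $\FAX$ is a normal dual $\FA$-bimodule with predual $\FAX_*$, and that $\F(\TAX)=\TR{\FA}{\FAX}$ as dual Banach algebras. The main technical obstacle, as noted, is the vanishing of $(0,\tilx)\Arp(0,\tily)$: one must be a little careful because the Arens product on $(\TAX)^{**}$ need not be jointly \wstar-continuous, but the key point is that we are working inside $\F(\TAX)$ rather than $(\TAX)^{**}$, where the product is genuinely separately \wstar-continuous by virtue of the dual Banach algebra structure. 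Everything else is bookkeeping driven by the universal property of $\F$ and the elementary matrix identity for the triangular product.
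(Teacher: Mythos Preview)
Your proof is correct and follows essentially the same route as the paper: identify $\F(\TAX)\cong\FA\oplus_1\FAX$ as dual Banach spaces via Proposition~\ref{p:free-of-triang-as-dual-space}, use the inclusion/projection homomorphisms to define the $\FA$-action on $\FAX$, verify the triangular product formula (the only nontrivial step being $(0,\tilx)\Arp(0,\tily)=0$, handled by \wstar-density and separate \wstar-continuity), and then invoke Proposition~\ref{p:normalmod-via-DBA} in the direction \ref{li:triangular-DBA}$\Rightarrow$\ref{li:normal-corner} to extract normality. The only cosmetic difference is that you obtain the homomorphism property of the inclusion and projection by appealing to functoriality (Corollary~\ref{c:free-functor} applied to $\imath$ and $q$), whereas the paper defines the obvious embedding and projection maps directly and checks they are homomorphisms by a \wstar-density argument; these amount to the same thing.
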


We note that in the conclusion of this theorem, not only is $\FAX$ made into an $\FA$-bimodule, but so is its predual $\FAX_*$.

\begin{proof}
As in the previous proposition, we write $T$ for $\TAX$. By that proposition, we can identify $\FT$ as a dual Banach space with $\FA\oplus_1\FAX$ (with predual identified with $\FA_*\oplus_\infty \FAX_*$\/).
 Observe that for $a \in A$ and $x \in X$,
$\bre{(a,x)}= (\ba, \bx).$ The natural embedding and projection maps
\[
E_{\FA} : \FA \hookrightarrow \FT,  \;    P_{\FA} : \FT \to \FA, \; P_{\FAX} : \FT \to \FAX
\]
are \wstar-continuous and satisfy $E_{\FA}(\ba) = \bre{(a, 0)}$, $P_{\FA} (\bre{(a,x)}) = \ba$.
As $\eta_A$ and $\eta_T$ are {\hm}s with \wstar-dense ranges,
 it follows from separate \wstar-\wstar\ continuity of multiplication in $\FA$ and $\FT$ (cf.~Remark~\ref{r:DBA-w*cts-product}) that $E_{\FA}$ and $P_{\FA}$ are algebra {\hm}s.
Hence, the subspace $\{(0, \tilx): \tilx \in \FAX\}= \ker P_{\FA}$ is a \wstar-closed ideal in $\FT$,
so that for any $\tila \in \FA$ and $\tilx \in \FAX$, there are elements $\tila \cdot\tilx, \ \tilx \cdot\tila \in \FAX$ such that
\[ (\tila, 0) \Arp (0, \tilx) = ( 0, \tila \cdot\tilx) \in \ker P_{\FA} \; \text{and} \; \ (0, \tilx) \Arp (\tila, 0) = ( 0, \tilx\cdot\tila ) \in \ker P_{\FA}\,, \]
where $\Arp$ denotes the Arens product in $\FT$.
In this way, $\FAX$ is a Banach $\FA$-bimodule: associativity and the bimodule property follow from associativity of multiplication in~$\FT$. 

Now let $\tila, \tilb \in \FA$, $\tilx, \tily \in \FAX$. Observe that
\[ \{(\tila, 0): \tila \in \FA\}= \operatorname{Im} E_{\FA} =  \ker P_{\FAX}\]
is a \wstar-closed subalgebra of $\FT$, isometrically isomorphic as a dual Banach algebra with~$\FA$.
Hence
\[ (\tila, 0 ) \Arp (\tilb, 0) = E_{\FA} (\tila) \Arp E_{\FA}(\tilb) = E_{\FA} (\tila \Arp \tilb) = (\tila \Arp \tilb, 0).\]
Taking nets $(x_i), (y_j)$ in $X$  such that $\bx_i \to \tilx$, $\by_j \to \tily$ \wstar in $\FAX$, we obtain
\[ \begin{aligned}
(0,\tilx) \Arp (0,\tily)
 & = \wslim_i \wslim_j (0, \bx_i) \Arp (0, \by_j) \\
 & = \wslim_i \wslim_j \bre{(0, x_i)(0,y_j)} & = (0,0). \\
\end{aligned} \]

\noindent
Hence, $(\tila, \tilx) \Arp (\tilb, \tily) = (\tila \Arp \tilb, \tila \cdot\tily + \tilx \cdot \tilb)$,
which is exactly the product in the triangular algebra $\TR{\FA}{ \FAX}$. Thus, we can identify the dual Banach algebra $\FT$ with $\TR{\FA}{ \FAX}$.
By Proposition~\ref{p:normalmod-via-DBA}, $\FAX$ is a normal dual $\FA$-bimodule. (Note that this proposition ensures $\FAX$ is not merely a module which happens to be a dual space, but that it is a dual module.)
\end{proof}

Here is the promised ``extension'' theorem for derivations, which should be compared with \cite[Lemma~2.2]{Gou}.

\begin{thm}[`Extension' of derivations]
\label{t:extend-der}
Let $A$ be a Banach algebra, $X$ a \emph{contractive} $A$-bimodule, and $D:A \to X$ a continuous derivation. Then  there exists a unique \wswscts\ derivation $\til{D}: \FA\to \FAX$ which makes the diagram
\[ \begin{diagram}[tight,height=2.5em]
 \FA & \rTo^{\til{D}} & \FAX \\
 \uTo^{\eta_A} & & \uTo_{\eta_X} \\
 A & \rTo_{D} & X
\end{diagram} \]
commute.
\end{thm}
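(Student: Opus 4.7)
The plan is to exploit the slogan of Remark~\ref{r:cat-slogan} together with functoriality of $\F$, so that the entire theorem reduces to applying Theorem~\ref{t:FT=TF} and Corollary~\ref{c:free-functor} to the associated triangular algebras. Given the derivation $D$, Lemma~\ref{l:derivation-via-HM} converts it into a continuous algebra homomorphism $\theta_D : A \to \TAX$ satisfying $q\theta_D = \sid_A$, where $q : \TAX \to A$ is the canonical quotient. Applying the functor $\F$, we obtain a \wswscts\ algebra homomorphism $\F(\theta_D) : \FA \to \F(\TAX)$ and, by functoriality, $\F(q)\F(\theta_D) = \F(\sid_A) = \sid_{\FA}$.

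Next I would invoke Theorem~\ref{t:FT=TF} to identify $\F(\TAX) = \TR{\FA}{\FAX}$ as dual Banach algebras. Under this identification, I need to check that $\F(q)$ is precisely the canonical projection $\TR{\FA}{\FAX} \to \FA$; this follows from the characterization of $\F(q)$ via its action on $\eta_{\TAX}(a,x) = (\ba,\bx)$ (the content of Theorem~\ref{t:FT=TF}) together with the \wstar-density of the range of $\eta_{\TAX}$ and \wswscts\ separate continuity of both candidate maps. Likewise, the composition $\eta_{\TAX}\theta_D : A \to \TR{\FA}{\FAX}$ sends $a$ to $(\ba, \bre{D(a)})$. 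Hence $\F(\theta_D)$ is a \wswscts\ homomorphism splitting the canonical projection from the triangular dual Banach algebra, so the dual-Banach-algebra analogue of Lemma~\ref{l:derivation-via-HM} supplies a \wswscts\ derivation $\Dtild : \FA \to \FAX$ with $\F(\theta_D) = \theta_{\Dtild}$.

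Commutativity of the required diagram then reads off one coordinate: for $a\in A$,
\[
(\eta_A(a),\Dtild(\eta_A(a))) = \theta_{\Dtild}(\eta_A(a)) = \F(\theta_D)\eta_A(a) = \eta_{\TAX}(a,D(a)) = (\ba, \bre{D(a)}),
\]
so $\Dtild\eta_A = \eta_X D$. For uniqueness, any other \wswscts\ derivation $\Dtild'$ making the diagram commute gives a \wswscts\ homomorphism $\theta_{\Dtild'} : \FA \to \TR{\FA}{\FAX} = \F(\TAX)$ with $\theta_{\Dtild'}\eta_A = \eta_{\TAX}\theta_D$, and the uniqueness clause in Theorem~\ref{t:free-DBA} forces $\theta_{\Dtild'} = \F(\theta_D) = \theta_{\Dtild}$, whence $\Dtild' = \Dtild$.

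The main obstacle is not any single hard estimate but rather the bookkeeping required to justify the identification of $\F(q)$ with the canonical projection of the triangular algebra, and symmetrically to verify that the ``second-coordinate'' map arising from $\F(\theta_D)$ is genuinely a derivation into $\FAX$ that is \wswscts\ when $\FAX$ carries its dual module structure. Both points are essentially packaged in Theorem~\ref{t:FT=TF} (specifically, the identification $\F(\TAX) = \TR{\FA}{\FAX}$ as dual Banach algebras together with $\bre{(a,x)} = (\ba,\bx)$), so once that theorem is in hand the rest of the argument is a diagram chase. The contractivity hypothesis on $X$ is needed solely to ensure that $\TAX$ is a genuine Banach algebra, matching Remark~\ref{r:why-contractive}.
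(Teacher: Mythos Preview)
Your proposal is correct and follows essentially the same route as the paper: convert $D$ to the splitting homomorphism $\theta_D$, apply the functor $\F$, use Theorem~\ref{t:FT=TF} to identify $\F(\TAX)$ with $\TR{\FA}{\FAX}$ (and $\F(q)$ with the canonical projection), and then read off $\Dtild$ via Lemma~\ref{l:derivation-via-HM}. The paper's proof is slightly terser on the bookkeeping you flag, and also notes the alternative uniqueness argument via \wstar-density of $\eta_A(A)$, but the strategy is identical.
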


\begin{proof}
Let $q:\TAX\to A$ be the canonical quotient \hm\ and $\imath: A\to \TAX$ the canonical inclusion \hm.
Define $\theta_D:A\to\TAX$ by $\theta_D(a)=(a,D(a))$; this is a norm-continuous algebra \hm, by 
 Lemma~\ref{l:derivation-via-HM}.
Then $\F(\theta_D): \FA\to \F(\TAX)$ is a \wswscts\ algebra \hm\ between dual Banach algebras (Corollary~\ref{c:free-functor}).

By Theorem~\ref{t:FT=TF} we may identify $\F(\TAX)$ with $\TR{\FA}{\FAX}$, whereupon $\F(q)$ is just the canonical quotient \hm\ $\TR{\FA}{\FAX}\to \FA$.
Since $q\theta_D=\sid$, we have $\F(q)\F(\theta_D)=\sid$ by functoriality, and so by Lemma~\ref{l:derivation-via-HM}, $\F(\theta_D)=\theta_{\til{D}}$ for some derivation $\til{D}: \FA\to \FAX$. As $\F(\theta_D)\eta_A = \eta_{(\TAX)} \theta_D$,
it follows that $\til{D}\eta_A=\eta_X D$.

Since $\theta_{\til{D}}(\tila) = (\tila, \til{D}(\tila))$ for all $\tila\in\FA$, the \wstar-\wstar continuity of $\theta_{\til{D}}=\F(\theta_D)$ implies that $\til{D}$ is \wswscts. Uniqueness follows either by using uniqueness of the extended \hm\ $\F(\theta_D)$, or more directly by noting that $\eta_A(A)$ is \wstar-dense in $\FA$.
\end{proof}

\begin{rem}\label{r:non-contractive}
We stated the theorem only for contractive $A$-bimodules, because our proof goes through triangular Banach algebras (see Remark~\ref{r:why-contractive}). This restriction could easily be dropped if we were prepared to think of Banach algebras as Banach spaces equipped with continuous associative multiplication, and relaxed the condition that the norm be submultiplicative. We would then find that the proofs of Theorems \ref{t:FT=TF} and~\ref{t:extend-der} go through without changes, and so both theorems \emph{are valid} even for bimodules that are not contractive. (One could also employ a slightly ad hoc renorming argument, see the comments before Lemma~\ref{l:renorm-ndmod}.) However, contractive bimodules are good enough for our purposes in the present paper.
\end{rem}

\begin{rem}
Given Theorem~\ref{t:extend-der}, if $\eta_A$ and $\eta_X$ are injective with norm-closed range and $\til{D}$ is inner, one might hope to find a net of inner derivations $A\to X$ which approximate~$D$. When $\AWAPA(X^*)=X^*$ then this can be done using Goldstine's lemma and Mazur's theorem, just as in Gourdeau's article \cite{Gou}. However, it is not clear if this will work in general.
\end{rem}

\end{section}

\begin{section}{Some functorial properties of $\F_A$, and a canonical map from $\FA\times\FA$ to $\F_A(\AA)$}
\label{s:functorial}
Our next main goal is to use Theorem~\ref{t:extend-der} to give an alternative description of when the dual Banach algebra $\FA$ is Connes-amenable. This will be done in Section~\ref{s:CA-of-FA}, but we need to prepare for this by setting up some general machinery. Moreover, the approach in Section~\ref{s:CA-of-FA} uses certain elements in the bimodule $\F_A(\AA)$ which behave in some sense like virtual diagonals for~$A$. Handling such elements requires us to take a closer look at how $\FA\ptp\FA$ and $\F_A(\AA)$ are related.

Note that any $\FA$-bimodule may be regarded as an $A$-bimodule via the \hm\ $\eta_A:A\to\FA$, and we shall do this automatically throughout this section and the following ones.

\begin{thm}[Universal property of $\FAX$]\label{t:unit-for-module-adjunction}
Let $X$ be a contractive $A$-bimodule and $\fN$ a contractive, normal dual $\FA$-bimodule. Let $g: X\to \fN$ be an $A$-bimodule map; then there exists a unique
\wswscts\ $\FA$-bimodule map $\gwap:\FAX\to \fN$ such that $\gwap\eta_X=g$.
\end{thm}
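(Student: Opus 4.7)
The plan is to reduce the module-level universal property to the algebra-level universal property (Theorem~\ref{t:free-DBA}), using the triangular Banach algebra machinery, in exactly the same spirit as the proof of Theorem~\ref{t:extend-der}. The slogan in Remark~\ref{r:cat-slogan} suggests that $A$-bimodule maps $X\to\fN$ correspond to algebra homomorphisms $\TAX\to \TR{\FA}{\fN}$ that fix the embedded copy of $\FA$, so one should lift such a homomorphism using the freeness of $\F(\TAX)$ and then read off the module map from the second coordinate.

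Concretely, first I would form the Banach algebra $\TAX$ and the dual Banach algebra $\TR{\FA}{\fN}$ (which is a dual Banach algebra by Proposition~\ref{p:normalmod-via-DBA}, since $\fN$ is a contractive normal dual $\FA$-bimodule). Define
\[ \phi : \TAX \longrightarrow \TR{\FA}{\fN}, \qquad \phi(a,x) = (\eta_A(a),\, g(x)). \]
A short calculation using that $g$ is an $A$-bimodule map, combined with the fact that $\fN$ is an $\FA$-bimodule via $\eta_A$, shows that $\phi$ is a norm-continuous algebra homomorphism. Applying Theorem~\ref{t:free-DBA} yields a unique \wswscts\ algebra homomorphism $h : \F(\TAX)\to \TR{\FA}{\fN}$ with $h\,\eta_{\TAX} = \phi$. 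Next, I would invoke Theorem~\ref{t:FT=TF} to identify $\F(\TAX)$ with $\TR{\FA}{\FAX}$, so that $h:\TR{\FA}{\FAX}\to\TR{\FA}{\fN}$.

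The key structural step is to show that $h$ has the block form $(\tila,\tilx)\mapsto(\tila,\gwap(\tilx))$ for some \wswscts\ $\FA$-bimodule map $\gwap:\FAX\to\fN$. I would first compare the two \wswscts\ algebra homomorphisms $\FA\to\TR{\FA}{\fN}$ given by $\tila\mapsto h(\tila,0)$ and $\tila\mapsto(\tila,0)$; they agree on the \wstar-dense range of $\eta_A$ (both send $\eta_A(a)$ to $(\eta_A(a),0)$), hence coincide. Similarly, composing with the projection $P_{\FA}$ shows that $P_{\FA}\circ h$ equals the canonical quotient $\TR{\FA}{\FAX}\to\FA$, because they agree on the \wstar-dense image of $\eta_{\TAX}$. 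Consequently $h(0,\tilx)\in\{0\}\oplus\fN$, and defining $\gwap(\tilx)$ as its second coordinate gives a \wswscts\ map. That $\gwap$ is an $\FA$-bimodule map follows by expanding $h\bigl((\tila,0)\Arp(0,\tilx)\bigr)=(\tila,0)\Arp(0,\gwap(\tilx))$ using that $h$ is a homomorphism, and similarly for the right action; and $\gwap\,\eta_X=g$ is immediate from $h\,\eta_{\TAX}=\phi$.

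For uniqueness, I would note that $\eta_X(X)$ is \wstar-dense in $\FAX$: indeed, any $\phi\in\FAX_* = \AWAPA(X^*)$ annihilating $\eta_X(X)$ satisfies $\phi(x)=\langle \eta_X(x),\phi\rangle = 0$ for all $x\in X$, hence $\phi=0$. Two \wswscts\ maps $\FAX\to\fN$ that agree on a \wstar-dense set must coincide. The main (mild) obstacle is bookkeeping: one has to verify carefully that $h$ really does respect the block decomposition. Once this is in hand, the result drops out, and it simultaneously exhibits $\F_A(-)$ as the object part of a left adjoint to the forgetful functor from contractive normal dual $\FA$-bimodules to contractive $A$-bimodules, fulfilling the adjoint-functor philosophy flagged in Remark~\ref{r:cat-slogan}.
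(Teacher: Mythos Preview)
Your proof is correct, and in fact the paper explicitly acknowledges this route in a remark following Corollary~\ref{c:counit-for-module-adjunction}. However, the paper's own proof of Theorem~\ref{t:unit-for-module-adjunction} takes a different, more direct path: rather than passing through the triangular algebra and Theorem~\ref{t:free-DBA}, it works at the predual level. Given $\psi\in\fN_*$, one uses \cite[Proposition~4.2]{Run_DBA2} (normality of $\fN$ forces the $\FA$-orbit maps of $\psi$ to be weakly compact) to deduce that the $A$-orbit maps of $g^*(\psi)$ are weakly compact, hence $g^*(\psi)\in\AWAPA(X^*)=\FAX_*$. Thus $g^*$ restricts to a map $\fN_*\to\FAX_*$, and $\gwap$ is defined as its adjoint; a short \wstar-approximation argument upgrades the $A$-bimodule property to an $\FA$-bimodule property.

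What each approach buys: your triangular-algebra argument is more conceptual and reinforces the adjoint-functor philosophy, deducing the module-level universal property from the algebra-level one (Theorem~\ref{t:free-DBA}) with no new analytic input. The paper's approach is shorter and gives an explicit description of $\gwap$ as an adjoint map, which is useful later (for instance, in constructing $\veps_\fM$ in Corollary~\ref{c:counit-for-module-adjunction}). The paper's stated reasons for preferring the direct route are that spelling out the block-decomposition bookkeeping is somewhat tedious, and that Theorem~\ref{t:free-DBA} itself relies on \cite[Proposition~4.2]{Run_DBA2}, so one may as well use that proposition directly.
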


\begin{proof}
Since $\eta_X(X)$ is \wstar-dense in $\FAX$, there can be at most one \wswscts\ map $\gwap$ with the required properties.

Let $\psi\in N_*$. By \cite[Proposition~4.2]{Run_DBA2} (or a direct argument using normality of $\fN$), the $\FA$-orbit maps of $\psi$ are weakly compact as maps $\FA\to N_*$. Hence, when we regard $\fN$ as an $A$-bimodule, the $A$-orbit maps $R^A_\psi$ and $L^A_\psi$ are weakly compact.
Consider $g^*(\psi)\in X^*$; since $g^*$ is an $A$-bimodule map, the $A$-orbit maps of $g^*(\psi)$ factorize as
$R^A_{g^*(\psi)} = g^*\circ R^A_\psi$ and $L^A_{g^*(\psi)} = g^*\circ L^A_\psi$\/.
Since $R^A_\psi$ and $L^A_\psi$ are weakly compact,
 so are $R^A_{g^*(\psi)}$ and $L^A_{g^*(\psi)}$, i.e.\ $g^*(\psi)\in \AWAPA(X^*)$.

Thus $g^*(\fN_*)\subseteq \AWAPA(X^*)=\FAX_*$. Define $\gwap$ to be the adjoint of $g^*: \fN_* \to \FAX_*$. By construction $\gwap$ is an $A$-bimodule map, and since $\FAX$ and $\fN$ are both normal dual modules, a straightforward \wstar-approximation argument shows $\gwap$ is in fact a $\FA$-bimodule map.
\end{proof}

\begin{cor}[Naturality of $X\to\FAX$]
\label{c:natural}
Let $X$ and $Y$ be contractive $A$-bimodules, and let $f:X\to Y$ be an  $A$-bimodule map.
Then there exists a unique \wswscts\ $\FA$-module map $\F_A(f):\FAX\to\FAY$ such that $\F_A(f)\eta_X=\eta_Yf$.
\end{cor}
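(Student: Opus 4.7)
The corollary is essentially an immediate application of the universal property established in Theorem~\ref{t:unit-for-module-adjunction}. My plan is to take $g \defeq \eta_Y f : X \to \FAY$, check that the hypotheses of Theorem~\ref{t:unit-for-module-adjunction} are satisfied for this $g$, and then define $\F_A(f)$ to be the resulting map $\gwap$.

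The key observations are as follows. Since $Y$ is a contractive $A$-bimodule, $Y^*$ is a contractive $A$-bimodule under the dual action, and $\FAY_* = \AWAPA(Y^*)$ is a closed sub-bimodule, so $\FAY_*$ is contractive as an $A$-bimodule; by Theorem~\ref{t:FT=TF} this predual carries a compatible $\FA$-bimodule structure making $\FAY$ a contractive normal dual $\FA$-bimodule. (If one is worried about a strict norm inequality arising from the triangular algebra construction, Lemma~\ref{l:renorm-ndmod} guarantees that at worst the contractiveness can be recovered after an isomorphic renorming.) Now $g = \eta_Y f$ is a composition of $A$-bimodule maps, hence an $A$-bimodule map $X \to \FAY$, and Theorem~\ref{t:unit-for-module-adjunction} therefore produces a unique \wswscts\ $\FA$-bimodule map $\gwap : \FAX \to \FAY$ satisfying $\gwap \eta_X = g = \eta_Y f$. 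Setting $\F_A(f) \defeq \gwap$ gives the required map.

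For uniqueness within the statement of the corollary, suppose $\Phi, \Phi' : \FAX \to \FAY$ are two \wswscts\ $\FA$-module maps with $\Phi \eta_X = \eta_Y f = \Phi' \eta_X$. Then $\Phi$ and $\Phi'$ agree on $\eta_X(X)$; since $\eta_X(X)$ is \wstar-dense in $\FAX$ and both maps are \wswscts, we conclude $\Phi = \Phi'$. (Alternatively, uniqueness is inherited directly from the uniqueness clause in Theorem~\ref{t:unit-for-module-adjunction}.)

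There is no serious obstacle here; the only mildly subtle point is the remark about contractiveness of $\FAY$ as an $\FA$-bimodule, but this is built into Theorem~\ref{t:FT=TF} (or handled by Lemma~\ref{l:renorm-ndmod} if needed). The corollary is really a naturality statement that falls out of the universal property, in exactly the same categorical fashion that Corollary~\ref{c:free-functor} was deduced from Theorem~\ref{t:free-DBA}.
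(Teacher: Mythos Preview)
Your proof is correct and follows exactly the paper's approach: apply Theorem~\ref{t:unit-for-module-adjunction} with $\fN = \FAY$ and $g = \eta_Y f$. The paper's proof is a single line to this effect; your additional remarks on contractiveness and uniqueness are accurate but more detailed than strictly needed.
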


\begin{proof}
Apply Theorem~\ref{t:unit-for-module-adjunction} with $\fN=\FAY$ and $g=\eta_Yf$.
\end{proof}

\begin{cor}[Co-unit property for $\F_A$]\label{c:counit-for-module-adjunction}
Let $\fM=(\fM_*)^*$ be a contractive, normal dual $\FA$-bimodule; regard it as an $A$-bimodule in the natural way, and consider the normal dual $\FA$-bimodule $\F_A(\fM)$. Then there is a \wswscts\ $\FA$-bimodule map $\veps_\fM: \F_A(\fM)\to \fM$ satisfying $\veps_\fM\eta_\fM=\sid$.
\end{cor}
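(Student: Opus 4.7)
The plan is to observe that this corollary follows immediately from the universal property established in Theorem~\ref{t:unit-for-module-adjunction}, applied to a particularly simple choice of data. Specifically, I would take $X = \fM$ (regarded as an $A$-bimodule via $\eta_A$), take the target normal dual $\FA$-bimodule to be $\fN = \fM$ itself, and take the $A$-bimodule map to be $g = \sid_\fM$.

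Before invoking Theorem~\ref{t:unit-for-module-adjunction}, I need to verify the contractivity hypothesis: the theorem requires $X$ to be a contractive $A$-bimodule. Since $\fM$ is contractive as an $\FA$-bimodule and the $A$-action on $\fM$ is given by $a\cdot m = \eta_A(a)\cdot m$, we have $\|a\cdot m\| \leq \|\eta_A(a)\|\|m\| \leq \|a\|\|m\|$ because $\eta_A$ is a composition of an isometric inclusion with the adjoint of a norm-one inclusion, hence $\|\eta_A\| \leq 1$. So $\fM$ is indeed contractive as an $A$-bimodule. The identity map $\sid_\fM$ is trivially an $A$-bimodule map.

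Applying Theorem~\ref{t:unit-for-module-adjunction} now yields a unique \wswscts\ $\FA$-bimodule map $(\sid_\fM)_{\WAP} : \F_A(\fM) \to \fM$ satisfying $(\sid_\fM)_{\WAP} \circ \eta_\fM = \sid_\fM$. Defining $\veps_\fM \defeq (\sid_\fM)_{\WAP}$ gives exactly the required co-unit.

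There is no real obstacle: the work was already done in proving Theorem~\ref{t:unit-for-module-adjunction}. The only point that needs a moment's thought is that the roles of ``source'' and ``target'' in the universal property are being collapsed onto a single object $\fM$, which is legitimate precisely because a contractive normal dual $\FA$-bimodule can always be viewed as a contractive $A$-bimodule with no loss. From a categorical standpoint, this corollary is the expected counit of the adjunction between the forgetful functor (from contractive normal dual $\FA$-bimodules to contractive $A$-bimodules) and the functor $\F_A(\blank)$, complementing the unit $\eta_X : X \to \FAX$ constructed earlier.
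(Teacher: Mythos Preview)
Your proof is correct and follows exactly the same approach as the paper: apply Theorem~\ref{t:unit-for-module-adjunction} with $X=\fM$, $\fN=\fM$, and $g=\sid_\fM$. Your additional verification that $\fM$ is contractive as an $A$-bimodule (via $\|\eta_A\|\leq 1$) is a welcome bit of care that the paper leaves implicit.
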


\begin{proof}
Apply Theorem~\ref{t:unit-for-module-adjunction} with $X=\fM$, $\fN=\fM$ and $g=\text{identity map on~$\fM$}$.
\end{proof}

\begin{rem}
We can describe the \hm\ $\veps_\fM:\F_A(\fM)\to \fM$ from Corollary~\ref{c:counit-for-module-adjunction} more explicitly, as follows. The main calculation used in the proof of Theorem~\ref{t:unit-for-module-adjunction} shows that
$\fM_* \subseteq \AWAPA(\fM^*)$, and the adjoint of the inclusion map $\fM_*\hookrightarrow \AWAPA(\fM^*)$ is then the desired quotient \hm~$\veps_\fM$.
\end{rem}

\begin{rem}[Adjoint functors, slight return]
Recall that Theorem~\ref{t:free-DBA} and the ensuing corollaries can be encapsulated in the statement
\begin{quote}
``$\F$~is left adjoint to the forgetful functor from dual Banach algebras to Banach algebras''.
\end{quote}
Likewise, Theorem~\ref{t:unit-for-module-adjunction} and the ensuing corollaries can be encapsulated in the statement
\begin{quote}
``$\F_A$~is left adjoint to the forgetful functor from contractive normal dual $\FA$-bimodules to contractive $A$-bimodules.''
\end{quote}
\end{rem}


\begin{rem}
One can deduce Theorem~\ref{t:unit-for-module-adjunction} from earlier results in this paper, without explicit use of \cite[Proposition 4.2]{Run_DBA2}. The idea is as follows: consider the map $f:\TAX\to \TR{\FA}{ \fN}$, $(a,x) \mapsto (\ba, g(x))$; this is a norm-continuous algebra \hm, and $\TR{\FA}{\fN}$ is a dual Banach algebra (Proposition~\ref{p:normalmod-via-DBA}), hence by Runde's universality result (Theorem~\ref{t:free-DBA}) there is a unique \wswscts\ algebra \hm\ $h:\F(\TAX)\to \TR{\FA}{\fN}$ such that $f=h\eta_{\TAX}$. Identifying $\F(\TAX)$ with $\TR{\FA}{\FAX}$ and restricting $h$ to the embedded copy of $\FAX$, we obtain the desired map $\gwap:\FAX\to\fN$ which ``extends'' $g$. 

This approach would be in keeping with our theme of deducing results about modules from results about algebras, and would reinforce the philosophy that all constructions are dictated by naturality and the properties of the functor $\F$ on the category of Banach algebras. However, there are two shortcomings. Firstly, spelling out all the algebraic details is rather tedious, and leads to a proof that seems overly indirect. Secondly, this approach relies on Theorem~\ref{t:free-DBA}, which is itself proved using~\cite[Pro\-pos\-ition~4.2]{Run_DBA2}; so we might as well use that proposition directly.
\end{rem}

We now turn to aspects of the bimodule $\F_A(\AA)$.
Recall that the Arens products on $A^{**}$ can be constructed \footnotemark\ by first defining two bilinear maps $A^{**}\times A^{**}\to (\AA)^{**}$ that extend the canonical bilinear map $A\times A \to \AA$; composing each of these maps with $\Delta^{**}: (\AA)^{**}\to A^{**}$ then yields the left and right Arens products.
\footnotetext{This appears to be an old observation, perhaps even known to Arens himself. We do not know the first explicit reference, but a discussion of this point of view can be found in \cite[\S3]{Daws_Diss10}.}
Following a suggestion of the referee, we can approach the Arens-type product on $\FA=\AWAPA(A^*)$ in the same way, factorizing it through a suitable bilinear map $\Theta:\FA\times\FA\to \F_A(\AA)$.

We start in greater generality.
Let $M$ and $N$ be left and right $A$-modules, respectively, and let $\Theta_0: M\times N \to M\ptp N$ be the canonical bilinear map.
(Left) Arens extension of $\Theta_0$ proceeds in three stages. First we define $\Theta_1: (M\ptp N)^* \times M \to N^*$ as follows: if $\beta\in (M\ptp N)^*$ and $m\in M^*$, let
\begin{equation}\label{eq:define_Theta_1}
\Theta_1(\beta, m) : n \mapsto \beta(m \tp n) \qquad(n\in N).
\end{equation}
Next, we define  $\Theta_2: N^{**}\times (M\ptp N)^* \to M^*$ as follows: for $\tiln\in N^{**}$ and $\beta\in (M\ptp N)^*$,
\begin{equation}\label{eq:define_Theta_2}
\Theta_2(\tiln, \beta): m \to \pair{\tiln}{\Theta_1(\beta,m)} \qquad(m\in M).
\end{equation}
Finally, we define $\Theta_3: M^{**}\times N^{**} \to (M\ptp N)^{**}$, the \dt{(left) Arens extension of $\Theta_0$}, as follows: for $\tilm\in M^{**}$ and $\tiln\in N^{**}$,
\begin{equation}\label{eq:define_Theta_3}
\Theta_3(\tilm,\tiln) : \beta \mapsto \pair{\tilm}{\Theta_2(\tiln,\beta)}
\quad (\beta\in (M\ptp N)^*).
\end{equation}
Note that $\Theta_3(\blank,\tiln)$ is \wswscts\ for each $\tiln\in N^{**}$. 

Regarding $M\ptp N$ as an $A$-bimodule in the standard way, we equip $(M\ptp N)^*$ with the dual bimodule structure.
The next lemma is a routine if somewhat tedious calculation.

\begin{lem}\label{l:extension-is-A-bimodule-map}
Let $a\in A$. Then:
\begin{YCnum}
\item\label{li:PEOPLE ON}
$\Theta_1(a\cdot\beta,m)=a\cdot\Theta_1(\beta,m)$
and
$\Theta_1(\beta\cdot a,m)=\Theta_1(\beta, a\cdot m)$
for all $\beta\in (M\ptp N)^*$, $m\in M$;
\item\label{li:THIS TRAIN}
$\Theta_2(\tiln,a\cdot\beta)=\Theta_2(\tiln\cdot a,\beta)$
and
$\Theta_2(\tiln,\beta\cdot a) = \Theta_2(\tiln,\beta)\cdot a$
for all $\tiln\in N^{**}$, $\beta\in (M\ptp N)^*$;
\item\label{li:WON'T SHUT UP}
$\Theta_3(a\cdot\tilm,\tiln) = a\cdot\Theta_3(\tilm,\tiln)$
and
$\Theta_3(\tilm,\tiln\cdot a) = \Theta_3(\tilm,\tiln)\cdot a$
for all $\tilm\in M^{**}$, $\tiln\in N^{**}$.
\end{YCnum}
\end{lem}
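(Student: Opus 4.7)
The plan is straightforward: prove each identity by unwinding the definitions \eqref{eq:define_Theta_1}--\eqref{eq:define_Theta_3} and repeatedly invoking the definition of the dual module action. Since $M$ is a left $A$-module and $N$ a right $A$-module, $M\ptp N$ is an $A$-bimodule with $a\cdot(m\tp n) = (a\cdot m)\tp n$ and $(m\tp n)\cdot a = m\tp (n\cdot a)$; the dual $(M\ptp N)^*$ then carries the usual bimodule structure defined by $\pair{a\cdot\beta}{m\tp n}=\pair{\beta}{m\tp(n\cdot a)}$ and $\pair{\beta\cdot a}{m\tp n}=\pair{\beta}{(a\cdot m)\tp n}$. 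Similarly $N^*$ is a left $A$-module, $N^{**}$ is a right $A$-module, $M^*$ is a right $A$-module, and $M^{**}$ is a left $A$-module, with the module operations on the biduals defined by adjunction in the usual way. The first step is just to pin these conventions down clearly so that no confusion arises.

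Once that is done, part~\ref{li:PEOPLE ON} follows by evaluating both sides on an arbitrary $n\in N$ and using the definitions; for instance, $\Theta_1(a\cdot\beta,m)(n) = (a\cdot\beta)(m\tp n) = \beta(m\tp(n\cdot a)) = \Theta_1(\beta,m)(n\cdot a) = (a\cdot \Theta_1(\beta,m))(n)$, and the other identity is an analogous one-line computation. Part~\ref{li:THIS TRAIN} then follows by evaluating both sides at $m\in M$ and pushing the $a$ across using part~\ref{li:PEOPLE ON} together with the definition of the dual action of $A$ on $N^{**}$ (respectively on $M^*$). Finally, part~\ref{li:WON'T SHUT UP} follows by evaluating at $\beta\in (M\ptp N)^*$ and combining part~\ref{li:THIS TRAIN} with the definition of the dual action of $A$ on $M^{**}$ and on $(M\ptp N)^{**}$.

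There is no genuine obstacle here; the only thing that can go wrong is bookkeeping, since each identity is a chain of four or five duality rewrites in which one has to be careful about which side of the module action $a$ sits on and which space one is currently working in. The cleanest way to present the proof is probably to handle (i), (ii), (iii) in that order, display each of the six equalities with its evaluation argument alongside, and point out at each step which piece of previously established data is being invoked.
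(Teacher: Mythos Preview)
Your proposal is correct and follows essentially the same approach as the paper: prove \ref{li:PEOPLE ON} directly from the definitions of $\Theta_1$ and the dual module action, then obtain \ref{li:THIS TRAIN} by evaluating at $m\in M$ and invoking \ref{li:PEOPLE ON}, and finally obtain \ref{li:WON'T SHUT UP} by evaluating at $\beta\in(M\ptp N)^*$ and invoking \ref{li:THIS TRAIN}. The paper likewise treats \ref{li:PEOPLE ON} as routine, writes out the two chains of equalities for \ref{li:THIS TRAIN} explicitly, and leaves \ref{li:WON'T SHUT UP} to the reader, so your write-up (with the explicit sample computation for \ref{li:PEOPLE ON}) is in fact slightly more detailed than the original.
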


\begin{proof}
The proof of part \ref{li:PEOPLE ON} is a straightforward argument using the definition of $\Theta_1$ and the definition of the module action on $(M\ptp N)^*$. 
Then, using part \ref{li:PEOPLE ON} and the definition of $\Theta_2$ (Equation~\eqref{eq:define_Theta_2}), we find that for $\tiln\in N^{**}$, $\beta\in (M\ptp N)^*$ and $m\in M$ we have
\[ \begin{aligned}
 \pair{\Theta_2(\tiln, a\cdot\beta)}{m}
 & = \pair{\tiln}{\Theta_1(a\cdot\beta,m)} \\
 & = \pair{\tiln}{a\cdot\Theta_1(\beta,m)} 
 = \pair{\tiln\cdot a}{\Theta_1(\beta,m)}
 = \pair{\Theta_2(\tiln\cdot a, \beta)}{m}
\end{aligned} \]
and
\[ \begin{aligned}
 \pair{\Theta_2(\tiln, \beta\cdot a)}{m}
& = \pair{\tiln}{\Theta_1(\beta\cdot a, m)} \\
& = \pair{\tiln}{\Theta_1(\beta, a\cdot m)} 
 = \pair{\Theta_2(\tiln, \beta)}{a\cdot m}
 = \pair{\Theta_2(\tiln, \beta)\cdot a}{m}
\end{aligned} \]
which proves \ref{li:THIS TRAIN}. In a similar way, using part \ref{li:THIS TRAIN} and Equation~\eqref{eq:define_Theta_3}, we can show that
for $\tilm\in M^{**}$, $\tiln\in N^{**}$ and $\beta\in (M\ptp N)^*$
we have
\[ \pair{\Theta_3(a\cdot\tilm,\tiln)}{\beta}
 = \pair{a\cdot\Theta_3(\tilm,\tiln)}{\beta}\quad,\quad
\pair{\Theta_3(\tilm,\tiln\cdot a)}{\beta}
  = \pair{\Theta_3(\tilm,\tiln)\cdot a}{\beta} \;; \]
the details are left to the reader.
\end{proof}


Given a left $A$-module $M$, let 
$\LWAP_A(M) =\{ \mu\in M \st R^A_\mu \text{ is weakly compact}\}$,
and for a right $A$-module $N$, define $\RWAP_A(N)$ in the analogous way.

\begin{lem}\label{l:sort-of-introverted}\
\begin{YCnum}
\item\label{li:using-LWAP}
If $\beta\in \LWAP_A((M\ptp N)^*)$ then $\Theta_1(\beta,m) \in \LWAP_A(N^*)$ for all $m\in M$.
\item\label{li:using-RWAP}
If $\beta\in \RWAP_A((M\ptp N)^*)$ then $\Theta_2(\tiln,\beta)\in \RWAP_A(M^*)$ for all $\tiln\in N^{**}$.
\item\label{li:introversion}
If $\tilm\in M^{**}$ and $\tiln\in \LWAP_A(N^*)^\perp$ then $\Theta_3(\tilm,\tiln)\in \LWAP_A((M\ptp N)^*)^\perp$.
If $\tilm\in \RWAP_A(M^*)^\perp$ and $\tiln\in N^{**}$ then $\Theta_3(\tilm,\tiln)\in \RWAP_A((M\ptp N)^*)^\perp$.
\end{YCnum}
\end{lem}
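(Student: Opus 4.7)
The plan is to prove parts \ref{li:using-LWAP} and \ref{li:using-RWAP} directly from Lemma~\ref{l:extension-is-A-bimodule-map}, and then deduce part \ref{li:introversion} from them by a short duality argument based on the very definition of $\Theta_3$.

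For part \ref{li:using-LWAP}, I fix $m \in M$ and observe that $\beta \mapsto \Theta_1(\beta, m)$ is a bounded linear map $(M\ptp N)^* \to N^*$ of norm at most $\|m\|$. By Lemma~\ref{l:extension-is-A-bimodule-map}\ref{li:PEOPLE ON}, this partial evaluation intertwines the left $A$-action, so that the orbit map $a \mapsto a\cdot \Theta_1(\beta,m)$ factors as $R^A_\beta: A \to (M\ptp N)^*$ followed by $\Theta_1(\,\cdot\,, m)$. When $\beta \in \LWAP_A((M\ptp N)^*)$, the map $R^A_\beta$ is weakly compact, so the composite inherits weak compactness since weakly compact operators form an operator ideal. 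Part \ref{li:using-RWAP} is handled in mirror-image fashion: $\beta \mapsto \Theta_2(\tiln,\beta)$ is bounded linear $(M\ptp N)^* \to M^*$ of norm at most $\|\tiln\|$, and Lemma~\ref{l:extension-is-A-bimodule-map}\ref{li:THIS TRAIN} gives $\Theta_2(\tiln, \beta\cdot a) = \Theta_2(\tiln,\beta)\cdot a$, so $L^A_{\Theta_2(\tiln,\beta)}$ factors through the weakly compact $L^A_\beta$.

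For part \ref{li:introversion} I would simply unwind the definitions. Assume $\tiln \in \LWAP_A(N^*)^\perp$ and fix $\beta \in \LWAP_A((M\ptp N)^*)$. By \ref{li:using-LWAP}, $\Theta_1(\beta, m) \in \LWAP_A(N^*)$ for every $m \in M$, so $\pair{\tiln}{\Theta_1(\beta, m)} = 0$ for every such $m$; in view of \eqref{eq:define_Theta_2} this says $\Theta_2(\tiln, \beta) = 0$ in $M^*$, and then \eqref{eq:define_Theta_3} yields $\pair{\Theta_3(\tilm,\tiln)}{\beta} = \pair{\tilm}{\Theta_2(\tiln,\beta)} = 0$ for every $\tilm \in M^{**}$, which is the first half. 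The second half is identical after swapping left and right: for $\beta \in \RWAP_A((M\ptp N)^*)$, part \ref{li:using-RWAP} gives $\Theta_2(\tiln, \beta) \in \RWAP_A(M^*)$, and then the annihilation hypothesis on $\tilm$ forces $\pair{\tilm}{\Theta_2(\tiln,\beta)} = 0$.

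I do not anticipate any genuine obstacle; the entire argument is bookkeeping. The only real subtlety is to keep straight which side of the module action is in play at each level of the Arens-type construction, and to recognise that $\Theta_1(\,\cdot\,, m)$ and $\Theta_2(\tiln, \,\cdot\,)$ are exactly the partial evaluations through which weak compactness can be transported from the ``outer'' bimodule $(M\ptp N)^*$ to the ``inner'' bimodules $N^*$ and $M^*$.
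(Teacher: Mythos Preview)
Your proof is correct and follows essentially the same approach as the paper: factor the orbit maps through $\Theta_1(\,\cdot\,,m)$ and $\Theta_2(\tiln,\,\cdot\,)$ using Lemma~\ref{l:extension-is-A-bimodule-map}, then use the ideal property of weakly compact operators; for part~\ref{li:introversion} unwind the definitions of $\Theta_2$ and $\Theta_3$ exactly as you do. The paper merely sketches part~\ref{li:introversion} where you spell it out, but the argument is the same.
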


\begin{proof}
Let $\beta \in (M\ptp N)^*$, $m\in M$ and $\tiln\in N^{**}$.
By Lemma~\ref{l:extension-is-A-bimodule-map}\ref{li:PEOPLE ON},
$R^A_{\Theta_1(\beta,m)} = \Theta_1( R^A_\beta(\blank), m)$ as maps $A\to N^*$\/,
and by
Lemma~\ref{l:extension-is-A-bimodule-map}\ref{li:THIS TRAIN},
$L^A_{\Theta_2(\tiln,\beta)}= \Theta_2(\tiln, L^A_\beta(\blank))$ as maps $A \to M^*$\/.

So if $\beta\in\LWAP_A((M\ptp N)^*)$ then $R^A_{\Theta_1(\beta,m)}$ factors through a weakly compact map, hence is weakly compact, which proves part~\ref{li:using-LWAP}.
On the other hand, if $\beta\in \RWAP_A((M\ptp N)^*)$ then $L^A_{\Theta_2(\tiln,\beta)}$ factors through a weakly compact map, hence is weakly compact, which proves part~\ref{li:using-RWAP}.

Finally: part \ref{li:introversion} follows easily from the results proved in parts~\ref{li:using-LWAP} and~\ref{li:introversion}, once we recall how $\Theta_3$ is defined
(see Equation~\eqref{eq:define_Theta_3}).
\end{proof}

Now we specialize to the case $M=N=A$.
It is well-known that when considering weak almost periodicity of functions on groups, it suffices to look at either left translates or right translates without having to check both. The following observation is an abstract version of this result; since we did not find an explicit statement in the literature we consulted, we include a proof for the reader's convenience.

\begin{lem}\label{l:LWAP=RWAP}
Let $\psi\in A^*$. Then 
$R^A_\psi = (L^A_\psi)^*\circ\kp$ and
$L^A_\psi = (R^A_\psi)^*\circ\kp$.
Consequently, $\LWAP_A(A^*)=\RWAP_A(A^*)=\AWAPA(A^*)$.
\end{lem}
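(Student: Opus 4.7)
The plan is to verify the two displayed identities by a direct unpacking of definitions, and then to read off the equality of the three sets as a routine consequence of the fact that an operator between Banach spaces is weakly compact if and only if its adjoint is.

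For the first identity, I would evaluate both sides at an arbitrary $b \in A$ and pair with an arbitrary $a\in A$. On the left, $R^A_\psi(b) = b\cdot\psi$, so $\pair{R^A_\psi(b)}{a} = \pair{b\cdot\psi}{a} = \psi(ab)$. On the right, using the definition of the adjoint and of $\kappa:A\to A^{**}$,
\[
\pair{(L^A_\psi)^*\kappa(b)}{a} = \pair{\kappa(b)}{L^A_\psi(a)} = \pair{L^A_\psi(a)}{b} = \pair{\psi\cdot a}{b} = \psi(ab).
\]
Since this holds for all $a,b\in A$, the identity $R^A_\psi = (L^A_\psi)^*\kappa$ follows. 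The identity $L^A_\psi = (R^A_\psi)^*\kappa$ is proved by the symmetric computation, swapping the roles of left and right.

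For the consequence, I would appeal to Gantmacher's theorem: a bounded linear operator between Banach spaces is weakly compact precisely when its adjoint is. Suppose $\psi\in\LWAP_A(A^*)$, so $L^A_\psi$ is weakly compact. Then $(L^A_\psi)^*$ is weakly compact, and hence the composition $R^A_\psi = (L^A_\psi)^*\kappa$ is weakly compact; thus $\psi \in \RWAP_A(A^*)$. The reverse inclusion follows by the symmetric argument using $L^A_\psi = (R^A_\psi)^*\kappa$. Therefore $\LWAP_A(A^*) = \RWAP_A(A^*)$, and since $\AWAPA(A^*)$ is by definition the intersection $\LWAP_A(A^*)\cap\RWAP_A(A^*)$, all three spaces coincide.

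There is no real obstacle in this argument; it is a pure definition-chase together with one invocation of Gantmacher's theorem. The only thing to be careful about is keeping track of the convention for the dual bimodule action on $A^*$ so that the two $\psi(ab)$'s line up correctly in the computation above.
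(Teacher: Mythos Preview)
Your proof is correct and follows essentially the same approach as the paper: a direct pairing computation to verify the identities, followed by Gantmacher's theorem. The only cosmetic difference is that the paper evaluates $R^A_\psi$ at $a$ and pairs with $b$, while you evaluate at $b$ and pair with $a$; both reduce to the same $\psi(ab)$, so nothing of substance differs.
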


\begin{proof}
Let $\psi\in A^*$, $a\in A$, $b\in B$. Then
\[
\pair{R^A_\psi(a)}{b} = \pair{a\cdot \psi}{b}
 = \pair{\psi\cdot b}{a} 
 = \pair{L^A_\psi(b)}{a} 
 = \pair{\kp(a)}{L^A_\psi(b)}
  = \pair{(L^A_\psi)^*\kp(a)}{b} \ .
\]
Thus $R^A_\psi = (L^A_\psi)^*\circ\kp$, and the other identity is proved similarly.
The final part of the lemma now follows from Gantmacher's theorem that an operator is weakly compact if and only if its adjoint is.
\end{proof}

Consider $\Theta_3:A^{**}\times A^{**}\to (\AA)^{**}$.
 By Lemma \ref{l:sort-of-introverted}\ref{li:introversion} and Lemma \ref{l:LWAP=RWAP}, basic linear algebra yields a bounded bilinear map
$\Theta: \FA \times \FA \to \F_A(\AA)$
which makes the following diagram commute:
\begin{equation}\label{eq:making Theta}
\begin{diagram}[tight,height=2.5em,width=4.5em]
A^{**}\times A^{**}  & \rTo^{\Theta_3} & (\AA)^{**} \\
\dTo^{{q_A\times q_A}}  & & \dTo_{q_{\AA}} \\
\FA \times \FA& \rTo_{\Theta} & \F_A(\AA)
\end{diagram}
\end{equation}
Here, $q_A: A^{**}\to \FA$ and $q_{\AA} : (\AA)^{**} \to \F_A(\AA)$ denote the natural quotient maps.
Since $\eta_A=q_A\kp_A$ and $\eta_{\AA}=q_{\AA}\kp_{\AA}$, we have
$\Theta(\eta_A\times \eta_A) = \eta_{\AA}\Theta_0$, or in our abbreviated notation,
\[ \Theta(\bv{a},\bv{b}) =\bv{a\tp b} \qquad\text{ for all $a,b\in A$.} \]

\begin{rem}\label{r:DWAP Theta is product}
By chasing through the definitions, one can show that $\Delta^{**}\Theta_3: A^{**}\times A^{**} \to A^{**}$ is precisely the usual first Arens product $(\tila,\tilb)\to \tila\Arp\tilb$.
Also, there is a commutative diagram
\begin{equation}\label{eq:making DWAP}
 \begin{diagram}[tight,height=2.5em,width=4.5em]
(\AA)^{**} & \rTo^{\Delta^{**}}  & A^{**} \\
\dTo^{q_{\AA}} & & \dTo_{q_A} \\
\F_A(\AA) & \rTo_{\DWAP} & \FA
\end{diagram}
\end{equation}
(c.f.\ the proof of Theorem~\ref{t:unit-for-module-adjunction}).
 Combining the commutative diagrams \eqref{eq:making Theta} and \eqref{eq:making DWAP}, we see that $\DWAP\Theta:\FA\times\FA\to\FA$ is just the multiplication map for the algebra $\FA$.
(If one revisits the original calculations of Lau and of Runde, this is in effect how one \emph{defines} the multiplication map on $\FA$ to make it into a dual Banach algebra.)
\end{rem}

\begin{lem}[$A$-bimodule property of $\Theta$]\label{Theta is an A-bimodule map}
For any $\tilm,\tiln\in\FA$ and $x\in A$ we have
 $\Theta(\tilm,\tiln\cdot x) = \Theta(\tilm,\tiln)\cdot x$ and 
 $\Theta(x\cdot\tilm,\tiln) = x\cdot\Theta(\tilm,\tiln)$.
 \end{lem}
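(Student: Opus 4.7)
The plan is to reduce the statement to the analogous bimodule identities for $\Theta_3$, which are recorded in Lemma~\ref{l:extension-is-A-bimodule-map}\ref{li:WON'T SHUT UP}, by using the commutative square \eqref{eq:making Theta} together with the fact that the quotient maps $q_A : A^{**} \to \FA$ and $q_{\AA} : (\AA)^{**} \to \F_A(\AA)$ are $A$-bimodule maps.

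First I would note that $q_A$ and $q_{\AA}$ are indeed $A$-bimodule maps: each is the adjoint of the inclusion of a sub-$A$-bimodule of $A^*$ (respectively $(\AA)^*$), and the adjoint of an $A$-bimodule inclusion between $A$-bimodules is again an $A$-bimodule map. In particular, $q_A(\tila \cdot x) = q_A(\tila) \cdot x$ and $q_A(x \cdot \tila) = x \cdot q_A(\tila)$ for $\tila \in A^{**}$, $x \in A$, and similarly for $q_{\AA}$.

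Next, given $\tilm, \tiln \in \FA$, use surjectivity of $q_A$ to choose $\tila, \tilb \in A^{**}$ with $q_A(\tila) = \tilm$ and $q_A(\tilb) = \tiln$. Then a short diagram chase gives
\[
\Theta(\tilm, \tiln \cdot x) = \Theta(q_A(\tila), q_A(\tilb \cdot x)) = q_{\AA}\Theta_3(\tila, \tilb \cdot x) = q_{\AA}(\Theta_3(\tila, \tilb) \cdot x) = \Theta(\tilm, \tiln) \cdot x ,
\]
where the first equality uses that $q_A$ is a right $A$-module map, the second is the commutative diagram \eqref{eq:making Theta}, the third is Lemma~\ref{l:extension-is-A-bimodule-map}\ref{li:WON'T SHUT UP}, and the last again uses \eqref{eq:making Theta} together with the fact that $q_{\AA}$ is a right $A$-module map. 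The left-hand identity is obtained by the symmetric chain of equalities, invoking the ``left'' halves of the same three ingredients.

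There is no real obstacle here: the content is already packaged in Lemma~\ref{l:extension-is-A-bimodule-map}\ref{li:WON'T SHUT UP}, and all that remains is to transport those identities across the surjective $A$-bimodule quotients in diagram~\eqref{eq:making Theta}. The only point worth checking carefully is that the predual $\F_A(\AA)_* = \AWAPA((\AA)^*)$ is indeed a sub-$A$-bimodule of $(\AA)^*$, so that $q_{\AA}$ is an $A$-bimodule map; but this is built into the very definition of $\AWAPA(\,\cdot\,)$.
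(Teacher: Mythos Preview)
Your proof is correct and follows essentially the same route as the paper: lift $\tilm,\tiln$ to $A^{**}$, invoke Lemma~\ref{l:extension-is-A-bimodule-map}\ref{li:WON'T SHUT UP} for $\Theta_3$, and push the identity down through the commutative square~\eqref{eq:making Theta} using that $q_A$ and $q_{\AA}$ are $A$-bimodule maps. The paper's version is terser but structurally identical.
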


\begin{proof}
Lift $\tilm,\tiln$ to representatives $\tilm', \tiln'\in A^{**}$.
By Lemma~\ref{l:extension-is-A-bimodule-map}\ref{li:WON'T SHUT UP}, we have
$\Theta_3(\tilm',\tiln'\cdot x) = \Theta_3(\tilm',\tiln')\cdot x$.
Now apply $q_{\AA}$ to both sides. Noting that $q_A$ and $q_{\AA}$ are $A$-bimodule maps, and using Diagram~\ref{eq:making Theta}, we get
\[ \Theta(\tilm,\tiln\cdot x) = q_{\AA} \Theta_3(\tilm',\tiln'\cdot x) = q_{\AA}\left[ \Theta_3(\tilm',\tiln')\cdot x\right] = \Theta(\tilm,\tiln)\cdot x \]
as required. The other identity is proved similarly and we omit the details.
\end{proof}

It is natural to ask if $\Theta$ is separately \wswscts\ as a bilinear map between dual Banach spaces. The answer turns out to be positive if $\FA$ has an identity element, but showing this requires some work. However, for the intended applications in Section~\ref{s:CA-of-FA}, we only need the following weaker version.

\begin{prop}\label{p:top centre idea}
Let $b,c\in A$. Then the linear maps
 $\tilx\mapsto \Theta(\tilx,\bv{c})$
and
 $\tilx\mapsto \Theta(\bv{b},\tilx)$
are both  \wswscts\ from $\FA\to \F_A(\AA)$.
\end{prop}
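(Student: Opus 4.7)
The plan is to establish w*-w* continuity by showing that the Banach-space adjoint of each of these two maps sends the predual $\F_A(\AA)_* = \AWAPA((\AA)^*)$ into the predual $\FA_* = \AWAPA(A^*)$.

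For the first map $\tilx \mapsto \Theta(\tilx,\bv{c})$, I would fix $\beta \in \F_A(\AA)_*$ and $\tilx \in \FA$. Choosing any lift $\tilx' \in A^{**}$ with $q_A(\tilx') = \tilx$, and setting $c'' = \kp_A(c) \in A^{**}$, the commutative diagram \eqref{eq:making Theta} gives $\Theta(\tilx,\bv{c}) = q_{\AA}(\Theta_3(\tilx',c''))$. Unwinding the definition \eqref{eq:define_Theta_3} of $\Theta_3$ and using that $q_{\AA}$ is the adjoint of the inclusion $\F_A(\AA)_* \hookrightarrow (\AA)^*$, one computes
\[ \pair{\beta}{\Theta(\tilx,\bv{c})} = \pair{\tilx'}{\Theta_2(c'',\beta)}. \]
Since $\beta \in \AWAPA((\AA)^*) \subseteq \RWAP_A((\AA)^*)$, Lemma \ref{l:sort-of-introverted}\ref{li:using-RWAP} places $\Theta_2(c'',\beta)$ in $\RWAP_A(A^*) = \AWAPA(A^*) = \FA_*$, where the final equality is Lemma \ref{l:LWAP=RWAP}. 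Membership in $\FA_*$ ensures the pairing depends only on $\tilx = q_A(\tilx')$ rather than on the chosen lift, and identifies the adjoint as the bounded linear map $\beta \mapsto \Theta_2(c'',\beta)$ from $\F_A(\AA)_*$ into $\FA_*$, giving the desired w*-w* continuity.

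The argument for $\tilx \mapsto \Theta(\bv{b},\tilx)$ proceeds symmetrically. With $b'' = \kp_A(b)$, the analogous computation yields
\[ \pair{\beta}{\Theta(\bv{b},\tilx)} = \pair{\tilx'}{\Theta_1(\beta,b)}; \]
here $\Theta_1(\beta,b) \in \LWAP_A(A^*) = \FA_*$ by Lemma \ref{l:sort-of-introverted}\ref{li:using-LWAP} applied to $\beta \in \LWAP_A((\AA)^*)$, again combined with Lemma \ref{l:LWAP=RWAP}.

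No serious obstacle arises. The only subtlety is that the intermediate functionals $\Theta_2(c'',\beta)$ and $\Theta_1(\beta,b)$ live a priori only in $A^*$, and the argument hinges on verifying -- via Lemma \ref{l:sort-of-introverted} -- that they in fact descend into $\FA_*$. This is precisely the introversion-type property that allows the continuity statements on $A^{**}$ (where $\Theta_3(\blank,\tiln)$ is automatically \wstar-\wstar\ continuous by \eqref{eq:define_Theta_3}) to be transferred to the required continuity statements on the quotient $\FA$.
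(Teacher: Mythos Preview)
Your proof is correct and is essentially the same approach as the paper's, differing only in presentation. The paper argues by showing that $\Theta_3(\,\cdot\,,\tilc')$ and $\Theta_3(\kappa(b),\,\cdot\,)$ are \wswscts\ on $A^{**}$ (the former immediately from \eqref{eq:define_Theta_3}, the latter via the same identity $\pair{\Theta_3(\kappa(b),\tily)}{\beta}=\pair{\tily}{\Theta_1(\beta,b)}$ that you use), and then invokes the fact that $q_A$ and $q_{\AA}$ are ``quotient maps for the \wstar-topologies'' to push the continuity down to $\FA$. You instead compute the pre-adjoints directly and appeal to Lemma~\ref{l:sort-of-introverted} to see they land in $\FA_*$; this unpacks precisely what the paper's quotient-map step encodes, since Lemma~\ref{l:sort-of-introverted}\ref{li:introversion} is what guarantees $\Theta$ is well defined in the first place. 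Neither route is materially shorter or more general than the other.
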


\begin{proof}
 As observed earlier, $\Theta_3$ is \wswscts\ in the first variable.
Therefore, considering Diagram~\ref{eq:making Theta} and noting that $q_A: A^{**}\to \FA$ is a quotient map for the \wstar-topologies, we deduce that $\Theta$ is \wswscts\ in the first variable.
To handle the second variable, fix $a\in A$. Then
$\pair{\Theta_3(\kappa(a),\tily)}{\beta}
 = \pair{\kappa(a)}{\Theta_2(\tily,\beta)} = \pair{\tily}{\Theta_1(\beta,a)}$ for all $\tily\in A^{**}$ and $\beta\in (\AA)^*$. This shows that $\Theta_3(\kappa(a),\,\cdot\,) : A^{**} \to (\AA)^{**}$ is \wswscts. By the definition of $\Theta$, there is a commutative diagram
\[ \begin{diagram}[tight,height=2.5em,width=5.5em]
A^{**} & \rTo^{\Theta_3(\kappa(a),\,\cdot\,)} & (\AA)^{**} \\
\dTo^{q_A}  & & \dTo_{q_{\AA}} \\
\FA & \rTo_{\Theta(\bv{a}, \,\cdot\,)} & \F_A(\AA)
\end{diagram} \]
and since the vertical arrows are quotient maps for the \wstar-topologies,
 $\Theta(\bv{a},\,\cdot\,)$ is \wswscts.
\end{proof}


For sake of completeness, in the remainder of this section we show how one can push Proposition~\ref{p:top centre idea} further, at least when $\FA$ has an identity element.

\begin{thm}[$\Theta$ is an $\FA$-bimodule map]
\label{Theta is an FA-bimodule map}
Let $\tila,\tilb,\tilc\in \FA$.
Then
$\tila\cdot\Theta(\tilb,\tilc)=\Theta(\tila\Arp\tilb,\tilc)$ and
and $\Theta(\tila,\tilb\Arp\tilc ) = \Theta(\tila,\tilb)\cdot\tilc$.
\end{thm}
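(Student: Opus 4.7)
The approach is to first strengthen Proposition~\ref{p:top centre idea} by showing that $\Theta(\cdot,\tilc):\FA\to\F_A(\AA)$ is \wswscts\ for \emph{every} $\tilc\in\FA$ (not only for $\tilc\in\eta_A(A)$). Once this is in hand, both identities follow from Lemma~\ref{Theta is an A-bimodule map}---which already gives them whenever one of the variables lies in the \wstar-dense subset $\eta_A(A)$---by a density argument in the remaining variable.

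To establish the continuity strengthening, the plan is to exhibit a pre-adjoint. Pick any lift $\tilc'\in A^{**}$ with $q_A(\tilc')=\tilc$, and for each $\phi\in\F_A(\AA)_*=\AWAPA((\AA)^*)$ set $\omega_\phi\defeq\Theta_2(\tilc',\phi)\in A^*$. Since $\AWAPA((\AA)^*)\subseteq\RWAP_A((\AA)^*)$, Lemma~\ref{l:sort-of-introverted}\ref{li:using-RWAP} places $\omega_\phi$ in $\RWAP_A(A^*)$, and then Lemma~\ref{l:LWAP=RWAP} identifies this with $\AWAPA(A^*)=\FA_*$. Thus $T_{\tilc}:\phi\mapsto\omega_\phi$ is a well-defined bounded linear map $\F_A(\AA)_*\to\FA_*$. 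Using the defining formula $\Theta_3(\tila',\tilc')(\phi)=\tila'\bigl(\Theta_2(\tilc',\phi)\bigr)$ together with the commuting diagram~\eqref{eq:making Theta}, a short unwinding gives
\[
\pair{T_{\tilc}^*(\tila)}{\phi} = \pair{\tila'}{\omega_\phi} = \pair{\Theta_3(\tila',\tilc')}{\phi} = \pair{\Theta(\tila,\tilc)}{\phi}
\]
for any lift $\tila'\in A^{**}$ of $\tila\in\FA$ and any $\phi\in\F_A(\AA)_*$, so $\Theta(\cdot,\tilc)=T_{\tilc}^*$ is \wswscts.

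With this in hand, the first identity $\tila\cdot\Theta(\tilb,\tilc)=\Theta(\tila\Arp\tilb,\tilc)$ is immediate: for fixed $\tilb,\tilc\in\FA$ both sides are \wswscts\ in $\tila$---the left by normality of $\F_A(\AA)$, and the right by separate \wstar-continuity of multiplication in the dual Banach algebra $\FA$ composed with the strengthened continuity of $\Theta(\cdot,\tilc)$---and they agree on $\eta_A(A)$ by Lemma~\ref{Theta is an A-bimodule map} (noting that the left $A$-action on $\FA$ and $\F_A(\AA)$ arising from $\eta_A$ coincides with left multiplication by $\bv{a}$), so density finishes. For the second identity $\Theta(\tila,\tilb\Arp\tilc)=\Theta(\tila,\tilb)\cdot\tilc$, I would proceed in two stages. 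First, with $\tila=\bv{a}$ for $a\in A$ fixed, the $\tilc=\bv{c}$ case supplied by Lemma~\ref{Theta is an A-bimodule map} extends to all $\tilc\in\FA$: both sides of $\Theta(\bv{a},\tilb\Arp\tilc)=\Theta(\bv{a},\tilb)\cdot\tilc$ are \wswscts\ in $\tilc$ using Proposition~\ref{p:top centre idea} to handle $\Theta(\bv{a},\cdot)$, \wstar-continuity of $\FA$-multiplication, and normality. Second, with the identity now valid for $\tila\in\eta_A(A)$ and arbitrary $\tilb,\tilc\in\FA$, extend in $\tila$: both sides are \wswscts\ in $\tila$ by the strengthened continuity of $\Theta(\cdot,\tilb\Arp\tilc)$ and of $\Theta(\cdot,\tilb)$ combined with normality, so density again finishes.

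The main obstacle is the initial continuity strengthening: since $\Theta_3$ is \wswscts\ only in its first slot, one cannot simply approximate $\tilc$ to transfer continuity in the naive way. The decisive observation is that $\Theta_2(\tilc',\phi)$ is exactly the pre-adjoint candidate, and that Lemma~\ref{l:sort-of-introverted}\ref{li:using-RWAP} is precisely what places it in $\FA_*$ with no further work.
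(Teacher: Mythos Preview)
Your proof is correct and follows essentially the same route as the paper: establish that $\Theta(\cdot,\tilc)$ is \wswscts\ for every $\tilc\in\FA$, prove the second identity first for $\tila\in\eta_A(A)$ (this is exactly the paper's Lemma~\ref{l:halfway there}), and then extend both identities by \wstar-density in~$\tila$. The only difference is tactical: you obtain first-variable continuity by constructing an explicit pre-adjoint via Lemmas~\ref{l:sort-of-introverted}\ref{li:using-RWAP} and~\ref{l:LWAP=RWAP}, whereas the paper gets it more cheaply (already inside the proof of Proposition~\ref{p:top centre idea}) from the \wstar-\wstar\ continuity of $\Theta_3$ in its first variable together with the fact that $q_A$ and $q_{\AA}$ are \wstar-quotient maps.
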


\begin{cor}\label{c:THUNDERCATS HO}
Suppose $\FA$ has an identity element $\tile$. Then $\Theta:\FA\times\FA\to \F_A(\AA)$ is separately \wswscts.
\end{cor}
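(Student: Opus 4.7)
The plan is to leverage the existence of the identity element $\tile\in\FA$ together with the bimodule identities of Theorem~\ref{Theta is an FA-bimodule map}, in order to reduce separate \wswscts-continuity of $\Theta$ to the fact, established in Theorem~\ref{t:FT=TF}, that $\F_A(\AA)$ is a normal dual $\FA$-bimodule.

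Concretely, for fixed $\tilc\in\FA$ I would take the identity $\Theta(\tila,\tilb\Arp\tilc)=\Theta(\tila,\tilb)\cdot\tilc$ of Theorem~\ref{Theta is an FA-bimodule map}, with the middle argument chosen to be $\tile$, to obtain
\[ \Theta(\tilx,\tilc)=\Theta(\tilx\Arp\tile,\tilc)=\tilx\cdot\Theta(\tile,\tilc) \qquad(\tilx\in\FA). \]
Writing $\tilv\defeq\Theta(\tile,\tilc)\in\F_A(\AA)$, this identifies the map $\tilx\mapsto\Theta(\tilx,\tilc)$ with the orbit map $R^{\FA}_\tilv\st\FA\to\F_A(\AA)$, $\tilx\mapsto\tilx\cdot\tilv$. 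Since $\F_A(\AA)$ is a normal dual $\FA$-bimodule, this orbit map is \wswscts\ by the very definition of normality.

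A mirror-image calculation handles the second variable: for fixed $\tilb\in\FA$, the other bimodule identity from Theorem~\ref{Theta is an FA-bimodule map} yields $\Theta(\tilb,\tilx)=\Theta(\tilb,\tile\Arp\tilx)=\Theta(\tilb,\tile)\cdot\tilx$, so that $\tilx\mapsto\Theta(\tilb,\tilx)$ coincides with the orbit map $L^{\FA}_\tilu$ for $\tilu\defeq\Theta(\tilb,\tile)$, and this is again \wswscts\ by normality.

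There is no real obstacle here: the substantive work has already been packaged into Theorem~\ref{Theta is an FA-bimodule map} (which provides the required factorization through a single $\FA$-argument) and Theorem~\ref{t:FT=TF} (which supplies normality of $\F_A(\AA)$). The only care needed is to match each bimodule action with the correct orbit map; the role of the identity element is precisely to license the replacement of a variable $\FA$-argument with the specific element $\tile$ without any loss of generality.
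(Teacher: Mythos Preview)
Your proof is correct and follows essentially the same route as the paper: the paper writes the slightly slicker single formula $\Theta(\tila,\tilb)=\tila\cdot\Theta(\tile,\tile)\cdot\tilb$ (using both identities from Theorem~\ref{Theta is an FA-bimodule map} at once) and then invokes normality, while you handle the two variables separately, but the content is identical. One small slip: in your first displayed line you cite the identity $\Theta(\tila,\tilb\Arp\tilc)=\Theta(\tila,\tilb)\cdot\tilc$, but the step $\Theta(\tilx\Arp\tile,\tilc)=\tilx\cdot\Theta(\tile,\tilc)$ actually uses the \emph{other} identity $\tila\cdot\Theta(\tilb,\tilc)=\Theta(\tila\Arp\tilb,\tilc)$ from the same theorem --- this does not affect correctness.
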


\begin{proof}[Proof of the corollary]
By Theorem~\ref{Theta is an FA-bimodule map}, for every $\tila,\tilb\in\FA$ we have
$\Theta(\tila,\tilb) = \tila\cdot\Theta(\tile,\tile)\cdot\tilb$.
The result now follows from normality of $\F_A(\AA)$.
\end{proof}

If Corollary~\ref{c:THUNDERCATS HO} held without assuming $\FA$ has an identity element, then Theorem \ref{Theta is an FA-bimodule map} would follow quickly from Proposition \ref{Theta is an A-bimodule map} by \wstar-\wstar\ continuity.
As pointed out by the referee, some assumption on $A$ is necessary: for if $A$ is a Banach space equipped with the zero product then $\FA=A^{**}$, $\F_A(\AA)=(\AA)^{**}$ and $\Theta=\Theta_3$ is usually not \wswscts\ in the 2nd variable. Therefore, to prove Theorem \ref{Theta is an FA-bimodule map} we take an indirect route.

\begin{lem}\label{l:halfway there}
Let $a\in A$ and $\tilb\in \FA$. Then $\Theta(\bv{a},\tilb\Arp\tilc ) = \Theta(\bv{a},\tilb)\cdot\tilc$ for all $\tilc\in \FA$.
\end{lem}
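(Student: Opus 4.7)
The plan is to show that both sides of the identity are \wswscts\ functions of $\tilc\in\FA$, and then reduce to checking agreement on the \wstar-dense subset $\eta_A(A)$, where Lemma \ref{Theta is an A-bimodule map} already gives what we want.

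First I would establish \wstar-\wstar continuity of the two maps $\tilc\mapsto \Theta(\bv{a},\tilb\Arp\tilc)$ and $\tilc\mapsto \Theta(\bv{a},\tilb)\cdot\tilc$. The right-hand map is \wswscts\ because $\F_A(\AA)$ is a normal dual $\FA$-bimodule (by Theorem~\ref{t:FT=TF}), so all right orbit maps of a fixed element are \wswscts. For the left-hand map, note that the product in $\FA$ is separately \wstar-continuous (cf.\ Remark~\ref{r:DBA-w*cts-product}), so $\tilc\mapsto \tilb\Arp\tilc$ is \wswscts\ from $\FA$ to $\FA$; composing with the \wswscts\ map $\tilx\mapsto\Theta(\bv{a},\tilx)$ from Proposition~\ref{p:top centre idea} gives the desired continuity.

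Next, I would verify the identity on the \wstar-dense subset $\eta_A(A)\subseteq \FA$. For $c\in A$, the element $\tilb\Arp\bv{c}$ coincides with the $A$-bimodule action $\tilb\cdot c$ induced via $\eta_A$, since the product $\Arp$ on $\FA$ restricted to the $\FA\times\eta_A(A)$-slot matches this action by construction. Hence Lemma~\ref{Theta is an A-bimodule map}, applied with $\tilm=\bv{a}$, $\tiln=\tilb$ and $x=c$, yields
\[
 \Theta(\bv{a},\tilb\Arp\bv{c}) = \Theta(\bv{a},\tilb\cdot c) = \Theta(\bv{a},\tilb)\cdot c,
\]
which is the required identity for $\tilc=\bv{c}$.

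Finally, the two \wswscts\ maps $\tilc\mapsto \Theta(\bv{a},\tilb\Arp\tilc)$ and $\tilc\mapsto \Theta(\bv{a},\tilb)\cdot\tilc$ agree on the \wstar-dense set $\eta_A(A)$, hence agree on all of $\FA$. No real obstacle presents itself: the only point worth care is checking that both sides really are \wswscts\ in $\tilc$ (so that \wstar-density closes the argument), and for the left-hand side this requires invoking Proposition~\ref{p:top centre idea} — not just the $A$-bimodule identity of Lemma~\ref{Theta is an A-bimodule map} — because separate \wstar-continuity of $\Theta$ in its second slot is precisely the content of that proposition.
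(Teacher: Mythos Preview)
Your proof is correct and follows essentially the same approach as the paper's: define the two maps $\tilc\mapsto\Theta(\bv{a},\tilb\Arp\tilc)$ and $\tilc\mapsto\Theta(\bv{a},\tilb)\cdot\tilc$, show both are \wswscts\ (the first via separate \wstar-continuity of $\Arp$ composed with Proposition~\ref{p:top centre idea}, the second by normality of $\F_A(\AA)$), check agreement on $\eta_A(A)$ via Lemma~\ref{Theta is an A-bimodule map}, and conclude by \wstar-density. The only cosmetic remark is that your closing sentence slightly overstates Proposition~\ref{p:top centre idea}: it gives \wstar-continuity of $\Theta$ in the second variable only when the first entry lies in $\eta_A(A)$, not full separate continuity --- but since here the first entry is $\bv{a}$, this is exactly what you need.
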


\begin{proof}
Given $a\in A$ and $\tilb\in\FA$ we define two linear maps $f_1,f_2:\FA\to\F_A(\AA)$ by
$f_1(\tilc) \defeq \Theta(\bv{a},\tilb\Arp\tilc)$, 
$f_2(\tilc) \defeq \Theta(\bv{a},\tilb)\cdot\tilc$\/.
Since $\FA$ is a dual Banach algebra, $L^{\FA}_{\tilb}: \tilc\mapsto \tilb\Arp\tilc$ is \wswscts; therefore, by Proposition~\ref{p:top centre idea}, $f_1$ is \wswscts. Since $\F_A(\AA)$ is a normal dual $\FA$-bimodule, $f_2$ is also \wswscts.
By Lemma~\ref{Theta is an A-bimodule map}, $f_1$ and $f_2$ agree on $\eta_A(A)$, which is \wstar-dense in $\FA$, and so they agree everywhere on~$\FA$.
\end{proof}

\begin{proof}[Proof of Theorem~\ref{Theta is an FA-bimodule map}]
We start by recalling that $\Theta$ is \wswscts\ in the first variable (see the proof of Proposition~\ref{p:top centre idea}).
Now, fix $\tilb,\tilc\in\FA$, and define maps $g_1, g_2: \FA \to \F_A(\AA)$ by
$g_1(\tila)\defeq \tila\cdot\Theta(\tilb,\tilc)$ and $g_2(\tila)=\Theta(\tila\Arp\tilb,\tilc)$, for $\tila\in\FA$.
Since $\F_A(\AA)$ is a \emph{normal} $\FA$-bimodule, $g_1$ is \wswscts. $g_2$ is also \wswscts, since $\Theta$ is \wswscts\ in the first variable and multiplication in a dual Banach algebra is separately \wswscts.
By Lemma \ref{Theta is an A-bimodule map}, $g_1$ and $g_2$ agree on $\eta_A(A)$, and so they agree on all of $\FA$ by \wstar-density.

The second identity requires a similar idea, but more than just a simple left-right switch.
Consider the maps $h_1,h_2: \FA \to \F_A(\AA)$ defined by
$h_1(\tila)\defeq \Theta(\tila,\tilb\Arp\tilc)$
and $h_2(\tila)\defeq\Theta(\tila,\tilb)\cdot\tilc$, for $\tila\in\FA$.
$h_1$ is \wswscts, since $\Theta$ is \wswscts\ in the first variable;
and since $\F_A(\AA)$ is a \emph{dual} $\FA$-bimodule, $h_2$ is also \wswscts. By Lemma \ref{l:halfway there}, $h_1$ and $h_2$ agree on $\eta_A(A)$, so once again by \wstar-density they coincide on~$\FA$.
\end{proof}

\end{section}


\begin{section}{Connes-amenability of $\FA$}
\label{s:CA-of-FA}
Recall that a dual Banach algebra $\fB$ is said to be \dt{Connes-amenable} if each \wswscts\ derivation from $\fB$ to a normal dual $\fB$-bimodule is inner.
In this section, we apply the algebraic machinery developed in previous sections to give an alternative description of when $\FA$ is Connes-amenable.

\begin{rem}\label{r:CA-con}
By our previous remarks on renorming normal dual bimodules, to decide if a dual Banach algebra $\fB$ is Connes-amenable, it suffices to only consider derivations into \emph{contractive} normal dual $\fB$-bimodules.
\end{rem}

We require a small observation that is not new but is worth stating explicitly.

\begin{thm}[Connes-amenability of $\FA$]
\label{t:CA-via-usual}
Let $A$ be a Banach algebra. Then the following statements are equivalent:
\begin{YCnum}
\item\label{li:CA}
 $\FA$ is Connes-amenable;
\item\label{li:der1}
 for every contractive $A$-bimodule $M$ and every derivation $D:A\to M$, the derivation $\eta_M D: A\to \F_A(M)$ is inner;
\item\label{li:der2}
 Every derivation from $A$ into a contractive, normal dual $\FA$-bimodule is inner.
\end{YCnum}
\end{thm}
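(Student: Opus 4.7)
The plan is to prove the chain of implications \ref{li:CA}$\implies$\ref{li:der1}$\implies$\ref{li:der2}$\implies$\ref{li:CA}, each of which should be a short application of the machinery already developed.

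For \ref{li:CA}$\implies$\ref{li:der1}, I would start with a derivation $D:A\to M$ into a contractive $A$-bimodule $M$. By Theorem~\ref{t:extend-der} there is a \wswscts\ derivation $\til{D}:\FA\to\F_A(M)$ satisfying $\til{D}\eta_A=\eta_M D$, and by Theorem~\ref{t:FT=TF} the target $\F_A(M)$ is a normal dual $\FA$-bimodule. Connes-amenability of $\FA$ then forces $\til{D}$ to be inner, say $\til{D}(\tila)=\tila\cdot\xi-\xi\cdot\tila$ for some $\xi\in\F_A(M)$. Restricting along $\eta_A$ and remembering that $\F_A(M)$ is viewed as an $A$-bimodule via $\eta_A$, we conclude that $\eta_M D$ is the inner derivation implemented by $\xi$.

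For \ref{li:der1}$\implies$\ref{li:der2}, let $\fN$ be a contractive normal dual $\FA$-bimodule and $D:A\to\fN$ a derivation. Applying \ref{li:der1} to $D$ (with $M=\fN$ regarded as an $A$-bimodule), we get $\xi\in\F_A(\fN)$ such that $\eta_\fN D(a)=a\cdot\xi-\xi\cdot a$. Now apply the \wswscts\ $\FA$-bimodule map $\veps_\fN:\F_A(\fN)\to\fN$ from Corollary~\ref{c:counit-for-module-adjunction}, which satisfies $\veps_\fN\eta_\fN=\sid$: this yields $D(a)=a\cdot\veps_\fN(\xi)-\veps_\fN(\xi)\cdot a$, so $D$ is inner.

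For \ref{li:der2}$\implies$\ref{li:CA}, let $D:\FA\to\fN$ be a \wswscts\ derivation into a normal dual $\FA$-bimodule; by Remark~\ref{r:CA-con} we may assume $\fN$ is contractive. Then $D\eta_A:A\to\fN$ is a norm-continuous derivation into a contractive, normal dual $\FA$-bimodule, so by \ref{li:der2} there is $\xi\in\fN$ with $D\eta_A(a)=\eta_A(a)\cdot\xi-\xi\cdot\eta_A(a)$ for all $a\in A$. The map $\ad_\xi:\tila\mapsto\tila\cdot\xi-\xi\cdot\tila$ on $\FA$ is \wswscts\ by normality of $\fN$, and it agrees with $D$ on the \wstar-dense subalgebra $\eta_A(A)$; since both $D$ and $\ad_\xi$ are \wswscts, a standard \wstar-density argument gives $D=\ad_\xi$ on all of $\FA$.

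I expect no serious obstacle here; the theorem is essentially a bookkeeping consequence of the adjunction-style results of Sections~\ref{s:free-ndmod} and~\ref{s:functorial}. The one point that requires the slightest care is the \wstar-density argument in \ref{li:der2}$\implies$\ref{li:CA}, where one must explicitly invoke normality of $\fN$ to conclude that the inner derivation implemented by $\xi$ is \wswscts\ on $\FA$ and not just on the image of $\eta_A$.
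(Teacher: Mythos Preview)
Your proposal is correct and follows essentially the same route as the paper's own proof: the same chain \ref{li:CA}$\implies$\ref{li:der1}$\implies$\ref{li:der2}$\implies$\ref{li:CA}, invoking Theorem~\ref{t:extend-der} for the first implication, Corollary~\ref{c:counit-for-module-adjunction} for the second, and the \wstar-density argument (with Remark~\ref{r:CA-con}) for the third. The only cosmetic difference is that the paper reduces to contractive $\fN$ at the outset of \ref{li:der2}$\implies$\ref{li:CA} rather than midway through.
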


\begin{proof}[Proof that \ref{li:CA}$\implies$\ref{li:der1}]
Let $M$ be a contractive $A$-bimodule and $D:A\to M$ a derivation. Applying Theorem~\ref{t:extend-der}, there exists a \wswscts\ derivation $\til{D}:\FA\to\F_A(M)$ which makes the following square commute:
\[ \begin{diagram}[tight,height=2.5em,width=4em]
 \FA & \rTo^{\til{D}} & \F_A(M) \\
 \uTo^{\eta_A} & & \uTo_{\eta_{M}} \\
 A & \rTo_{D} & M
\end{diagram} \]
As $\F_A(M)$ is normal, Connes-amenability of $\FA$ implies that $\til{D}$ is inner; hence $\til{D}\eta_A$ is inner. Since $\til{D}\eta_A= \eta_MD$, \ref{li:der1} holds.

\paragraph{\it Proof that \ref{li:der1}$\implies$\ref{li:der2}}
Let $\fM$ be a contractive, normal dual $\FA$-bimodule and $d:A\to\fM$ be a derivation. By hypothesis, the derivation $D=\eta_{\fM} d: A \to \F_A(\fM)$ is inner, say $D= {\rm ad}_{\tily}$ for some $\tily\in \F_A(\fM)$.
By Corollary~\ref{c:counit-for-module-adjunction} there is a \wswscts\ $A$-bimodule map $\veps_\fM: \F_A(\fM) \to \fM$ such that $\veps_{\fM} \eta_{\fM} = \sid$.
Hence, if we put $\tilw=\veps_{\fM}(\tily)$,
we have
$d(a) = \veps_{\fM} D(a) = \veps_{\fM} (a\cdot \tily - \tily \cdot a) = a \cdot \tilw - \tilw \cdot a = {\rm ad}_{\tilw} (a)$ for all $a\in A$.
Thus $d$ is inner.


\paragraph{\it Proof that \ref{li:der2}$\implies$\ref{li:CA}}
Let $\fN$ be a contractive, normal dual $\FA$-bimodule and let $D:\FA\to \fN$ be a \wswscts\ derivation. Then $D\eta_A:A\to\fN$ is a derivation, so by hypothesis $D\eta_A = {\rm ad}_{\tiln}:A\to\fN$ for some $\tiln\in\fN$. Since $\fN$ is a normal dual module, ${\rm ad}_{\tiln}$ extends to a \wswscts\ inner derivation ${\rm ad}_{\tiln}:\FA\to\fN$. As $D$ and ${\rm ad}_{\tiln}$ are \wswscts\ and agree on the \wstar-dense subset $\eta_A(A)$, they agree on all of $\FA$, and so $D={\rm ad}_{\tiln}$ is inner. In view of Remark~\ref{r:CA-con}, $\FA$ is Connes-amenable.
\end{proof}

The theorem has the following consequence, which we will need later.

\begin{cor}\label{c:CA-via-der}
Let $A$ be a Banach algebra. The following are equivalent:
\begin{itemize}
\item[{\rm(a)}] $\FA$ is Connes-amenable;
\item[{\rm(b)}] $\FA$ has an identity element, and every derivation from $A$ into a unit-linked, contractive, normal dual $\FA$-bimodule is inner.
\end{itemize}
\end{cor}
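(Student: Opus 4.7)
The forward direction (a)$\Rightarrow$(b) is essentially formal. A Connes-amenable dual Banach algebra is automatically unital, a standard fact (see, e.g.,~\cite{Run_DBA1}), so $\FA$ has an identity; and the derivation condition in (b) is simply the restriction of Theorem~\ref{t:CA-via-usual}\ref{li:der2} to the subclass of unit-linked contractive normal dual $\FA$-bimodules.

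For the reverse direction (b)$\Rightarrow$(a), the plan is to verify condition~\ref{li:der2} of Theorem~\ref{t:CA-via-usual}: every derivation $D:A\to\fM$, where $\fM$ is a contractive normal dual $\FA$-bimodule, is inner. First, I would lift $D$ to a \wswscts\ derivation $D':\FA\to\fM$ satisfying $D'\eta_A=D$, by applying Theorem~\ref{t:extend-der} to obtain $\til D:\FA\to\F_A(\fM)$ and composing with the co-unit map $\veps_\fM:\F_A(\fM)\to\fM$ supplied by Corollary~\ref{c:counit-for-module-adjunction}.

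Second, I would use the identity $\tile\in\FA$ to split $\fM$. By normality, the maps $m\mapsto\tile\cdot m$ and $m\mapsto m\cdot\tile$ are commuting, \wswscts\ idempotent $\FA$-bimodule maps, yielding a direct sum decomposition $\fM=\fM_{11}\oplus\fM_{10}\oplus\fM_{01}\oplus\fM_{00}$ into four \wstar-closed $\FA$-sub-bimodules: $\fM_{11}=\tile\cdot\fM\cdot\tile$ (unit-linked), $\fM_{00}=(1-\tile)\cdot\fM\cdot(1-\tile)$, $\fM_{10}=\tile\cdot\fM\cdot(1-\tile)$, and $\fM_{01}=(1-\tile)\cdot\fM\cdot\tile$. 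The derivation $D'$ splits accordingly as $D'=D'_{11}+D'_{10}+D'_{01}+D'_{00}$. On $\fM_{00}$, both actions of $\FA$ are zero (since for $m\in\fM_{00}$, $\tila\cdot m=(\tila\Arp\tile)\cdot m=\tila\cdot(\tile\cdot m)=0$), and the derivation identity then forces $D'_{00}(\tila)=D'_{00}(\tile\Arp\tila)=\tile\cdot D'_{00}(\tila)+D'_{00}(\tile)\cdot\tila=0$ for all $\tila\in\FA$. On $\fM_{10}$ the right action of $\FA$ vanishes, and substituting $\tilb=\tile$ in the derivation identity yields $D'_{10}(\tila)=\tila\cdot D'_{10}(\tile)=\text{ad}_{D'_{10}(\tile)}(\tila)$; symmetrically, $D'_{01}=\text{ad}_{-D'_{01}(\tile)}$. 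Thus $D'_{00}$, $D'_{10}$ and $D'_{01}$ are automatically inner.

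The remaining piece $D'_{11}:\FA\to\fM_{11}$ takes values in a unit-linked normal dual $\FA$-bimodule; after renorming $\fM_{11}$ contractively via Lemma~\ref{l:renorm-ndmod}, hypothesis (b) applied to the derivation $D'_{11}\eta_A:A\to\fM_{11}$ shows it is inner, and since both $D'_{11}$ and any inner derivation $\FA\to\fM_{11}$ are \wswscts\ while $\eta_A(A)$ is \wstar-dense in $\FA$, we conclude that $D'_{11}$ itself is inner. Summing, $D'$ is inner, hence so is $D=D'\eta_A$. The main subtlety I anticipate is the four-piece decomposition: lifting $D$ to the \emph{unital} dual Banach algebra $\FA$ is crucial, because it allows the three \emph{degenerate} pieces to be handled by the trivial manipulation of setting $\tila=\tile$, a move that has no direct analogue at the level of $A$ alone.
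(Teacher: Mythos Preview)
Your proof is correct and uses the same core idea as the paper: the four-piece Peirce decomposition of the target bimodule via the idempotents $m\mapsto\tile\cdot m$ and $m\mapsto m\cdot\tile$, after which the three degenerate summands give inner derivations automatically and hypothesis~(b) handles the unit-linked corner.

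The only difference is a harmless detour. You aim to verify condition~\ref{li:der2} of Theorem~\ref{t:CA-via-usual}, so you start with $D:A\to\fM$, lift it to $D':\FA\to\fM$ via Theorem~\ref{t:extend-der} and the co-unit, decompose, and then restrict $D'_{11}$ back to $A$ to invoke~(b). The paper instead verifies Connes-amenability directly: it begins with a \wswscts\ derivation $D:\FA\to\fN$, decomposes immediately, and (implicitly) restricts the unit-linked piece to $A$ via $\eta_A$ to apply~(b), recovering innerness on all of $\FA$ by \wstar-density. Your lifting step is thus unnecessary --- you go up, decompose, and come back down, whereas the paper is already ``up'' --- but nothing is wrong with it. Also, your renorming of $\fM_{11}$ via Lemma~\ref{l:renorm-ndmod} is not needed: $\fM_{11}$ inherits contractivity directly from~$\fM$.
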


\begin{proof}
Suppose (a) holds. By \cite[Proposition 4.1]{Run_DBA1} $\FA$ has an identity element, $\tile$ say. The rest of~(b) follows by \ref{li:CA}$\implies$\ref{li:der2} of Theorem~\ref{t:CA-via-usual}.

Conversely, suppose (b) holds. Let $\tile$ be the identity element of $\FA$, and let $\fN$ be a contractive, normal dual $\FA$-bimodule. Let $I$ denote the formal identity operator $\fN\to\fN$; then we have an isomorphism of normal dual $\FA$-bimodules
\[ \fN = (I-\tile)\fN(I-\tile) \oplus \tile\fN(I-\tile) \oplus (I-\tile)\fN\tile \oplus \tile\fN\tile.\]
Now let $D:\FA\to\fN$ be a \wswscts\ derivation. By a standard argument for derivations on unital Banach algebras (see, for instance, the remarks in \cite[\S1.c]{BEJ_CIBA}), we can write $D=D_0+D_1$ where $D_1$ takes values in the \emph{unit-linked}, normal dual bimodule $\tile\fN\tile$, and $D_0$ is inner. By the assumption (b), $D_1$ is inner, so $D$ is inner, and (a) holds.
\end{proof}

Just as amenability for Banach algebras may be characterized in terms of the existence of virtual diagonals, Connes-amenability for dual Banach algebras may be characterized in terms of the existence of certain ``diagonal-type'' elements. To be more precise: the paper \cite{Run_DBA2} introduces the notion of a \dt{$\sigWC$-diagonal} for a given dual Banach algebra $\fB$, which is by definition a certain kind of linear functional on a particular subspace of $(\fB\ptp\fB)^*$, and proves [Theorem~4.8, \ibid] that $\fB$ is Connes-amenable if and only if $\fB$ has a $\sigWC$-diagonal.

In the case where $\fB=\FA$ for some Banach algebra $A$, we can obtain a more convenient characterization with the same flavour, phrased in terms of certain elements in $\F_A(\AA)$.
To state the relevant definition, we first need some notation.

\begin{notn}
We write $\Delta$ for the product map $\AA\to A$ and $\DWAP$ for the induced map $\F_A(\Delta):\F_A(\AA)\to\FA$, which is well-defined by Corollary~\ref{c:natural}.
\end{notn}

\begin{dfn}\label{d:WAP-VD}
An element $\tilm \in \F_A(\AA)$ is called a \dt{\WAP-virtual diagonal for $A$} if
$a \cdot \tilm = \tilm \cdot a$ and $\DWAP(\tilm) \cdot a = \bv{a}$ for each $a \in A$\/.
\end{dfn}

\begin{rem}\label{r:WAPVD-extra}
If $\tilm\in\F_A(\AA)$ is a \WAP-virtual diagonal for $A$, then it follows from normality of $\F_A(\AA)$ and \wstar-continuity that $\tila\cdot\tilm =\tilm\cdot\tila$
and $\DWAP(\tilm) \cdot \tila = \tila$
 for all $\tila\in\FA$ (and not just those in $\eta_A(A)$).
\end{rem}

Runde has shown in \cite{Run_TAMS06} that whenever $G$ is an amenable non-compact [SIN] group, $\WAP(G)^*$ is a Connes-amenable dual Banach algebra which fails to have a normal virtual diagonal.
Nevertheless, we will show below (Theorem~\ref{t:WAPVD-CA}) that $\FA$ is Connes-amenable if and only if $A$ has a \WAP-virtual diagonal.
We shall take a somewhat indirect route, first discussing and constructing a ``universal'' derivation for $\FA$ in the case where $\FA$ is unital. This follows the philosophy, going back to Hochschild's original papers, that in order to show cohomology groups with coefficients in some class of modules vanish, it is often enough to show this for one particular module in that class. The framework developed along the way may be of interest in other problems concerning derivations out of dual Banach algebras.

\begin{dfn}
Let $A$ be a Banach algebra such that $\FA$ has an identity element.
We denote by $\DERFA$ the class of all pairs $(D,\fN)$ where $\fN$ is a \emph{unit-linked, contractive}, normal dual $\FA$-bimodule and $D:A\to \fN$ is a (norm-continuous) derivation. A pair $(d,\fX)\in\DERFA$ is said to be \dt{weakly universal for $\DERFA$} if, for each $(D,\fN)\in\DERFA$, there is a \wswscts\  $A$-bimodule map $g:\fX\to\fN$ such that $g d = D$.
\end{dfn}

\begin{rem}
This terminology is motivated by category theory. Universal constructions can often be interpreted as being initial objects in certain categories, and in the present case one can make $\DERFA$ into a category in a natural way. There is a notion of a ``weakly initial object'' in a given category, and in this case it would correspond to a weakly universal element of $\DERFA$ in the sense defined above.
\end{rem}


\begin{thm}\label{t:weakly-universal}
Suppose $\FA$ has an identity element $\tile$. Then there exists $\tils\in \F_A(\AA)$ with the following properties:
\begin{itemize}
\item $\DWAP(\tils)=\tile$;
\item if we put $d_A(a) = \tils\cdot a - a\cdot\tils$ for all $a\in A$, then $d_A:A \to \ker\DWAP$ is weakly universal for $\DERFA$.
\end{itemize}
\end{thm}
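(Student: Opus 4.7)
The plan is to take $\tils := \Theta(\tile,\tile)\in\F_A(\AA)$, where $\Theta$ is the separately \wswscts\ bilinear map constructed in Section~\ref{s:functorial}. By Remark~\ref{r:DWAP Theta is product} one immediately has $\DWAP(\tils) = \tile\Arp\tile = \tile$, and the $\FA$-bimodule property of $\Theta$ (Theorem~\ref{Theta is an FA-bimodule map}) gives
\[ d_A(a) \;=\; \tils\cdot a - a\cdot\tils \;=\; \Theta(\tile,\ba) - \Theta(\ba,\tile). \]
Since $\DWAP$ is itself an $\FA$-bimodule map (Corollary~\ref{c:natural} applied to $\Delta$), $d_A$ takes values in $\ker\DWAP$.

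For the weakly universal property, fix $(D,\fN)\in\DERFA$, and use Theorem~\ref{t:extend-der} together with the co-unit map from Corollary~\ref{c:counit-for-module-adjunction} to produce a \wswscts\ derivation $\hat D:\FA\to\fN$ with $\hat D\eta_A = D$; the unit-linkedness of $\fN$ applied to the Leibniz rule at $\tile^2=\tile$ then forces $\hat D(\tile)=0$. Next define a bounded left $A$-module map $L_0:\AA\to\fN$ on elementary tensors by $L_0(a\tp b) = a\cdot D(b)$. A short Leibniz calculation reveals the right-module defect
\[ L_0((a\tp b)\cdot c) - L_0(a\tp b)\cdot c \;=\; \Delta(a\tp b)\cdot D(c). \]
The strategy is to lift $L_0$ to a \wswscts\ map $G:\F_A(\AA)\to\fN$ and then take $g := G|_{\ker\DWAP}$, since the defect term above will vanish on $\ker\DWAP$.

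The main technical obstacle is showing $L_0^*(\fN_*) \subseteq \AWAPA((\AA)^*) = \F_A(\AA)_*$, whose adjoint (restricted to preduals) yields $G$ with $G\eta_{\AA}=L_0$. Fixing $\psi\in\fN_*$, Lemma~\ref{l:wap-bilin} reduces the question to verifying that the two bilinear forms $(c,u)\mapsto\pair{L_0^*\psi}{c\cdot u}$ and $(u,c)\mapsto\pair{L_0^*\psi}{u\cdot c}$ are weakly compact. The first factors as $c\mapsto c\cdot\psi\in\fN_*$ followed by $L_0^*$; the key input is that $c\mapsto c\cdot\psi$ is weakly compact by normality of $\fN$, exactly as in the proof of Theorem~\ref{t:unit-for-module-adjunction}. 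The second, after expansion via the Leibniz rule, decomposes as a sum of a piece factoring through the analogous weakly compact orbit map $c\mapsto\psi\cdot c$, and a piece factoring through the bounded map $\Delta:\AA\to A$ followed by another weakly compact orbit map; each piece is therefore weakly compact. This is an extended diagram chase rather than a conceptual difficulty, but it is the heart of the proof.

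Once $G$ is in hand, the defect identity for $L_0$ extends by \wstar-density together with separate \wstar-continuity of the $\FA$-module action and of $\hat D$ to $G(\tilm\cdot\tilc) = G(\tilm)\cdot\tilc + \DWAP(\tilm)\cdot\hat D(\tilc)$ for all $\tilm\in\F_A(\AA)$ and $\tilc\in\FA$, with a symmetric identity on the left. Hence on $\ker\DWAP$ both defect terms vanish, so $g := G|_{\ker\DWAP}$ is a \wswscts\ $\FA$-bimodule map. Finally, using \wstar-continuity of $G$ together with the maps $\Theta(\blank,\ba)$ and $\Theta(\ba,\blank)$ from Proposition~\ref{p:top centre idea} and \wstar-density of $\eta_A(A)$ in $\FA$, one identifies $G(\Theta(\tile,\ba)) = \tile\cdot\hat D(\ba) = D(a)$ and $G(\Theta(\ba,\tile)) = \ba\cdot\hat D(\tile) = 0$, yielding $g(d_A(a)) = D(a)$, as required.
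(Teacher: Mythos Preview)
Your proof is correct and follows essentially the same strategy as the paper: choose $\tils = \Theta(\tile,\tile)$, define the same map $L_0(a\otimes b) = a\cdot D(b)$ (denoted $h$ in the paper), establish the key inclusion $L_0^*(\fN_*)\subseteq \AWAPA((\AA)^*)$ via weakly compact bilinear forms (this is the paper's Lemma~\ref{l:surprising-extension}), and verify $G\Theta(\tile,\ba) - G\Theta(\ba,\tile) = D(a)$ using Proposition~\ref{p:top centre idea} together with $\hat D(\tile)=0$. The only cosmetic difference is that you compose with the co-unit $\veps_{\fN}$ at the outset to land directly in $\fN$, whereas the paper works in $\F_A(\fN)$ via $\hwap$ and applies $\veps_{\fN}$ only at the end; note also a harmless left/right slip in your factorization of the first bilinear form (it should be $c\mapsto \psi\cdot c$ rather than $c\mapsto c\cdot\psi$, but both orbit maps are weakly compact by normality, so nothing is lost).
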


The proof of Theorem~\ref{t:weakly-universal} is patterned after known ideas for derivations from unital Banach algebras. However, since $A$ need not have an identity element, we cannot obtain $d_A$ directly from a derivation $A\to\ker\Delta$ in any reasonable way, and must work a little harder.
The next lemma is inspired by the proof of \cite[Lemma 4.9]{Run_DBA2}. (Note that there is a typographical error in the statement of that lemma; the conclusion should be that a certain map takes values in $\sigWC(({\mathfrak A}\ptp{\mathfrak A})^*)$.

\begin{lem}\label{l:surprising-extension}
Let $D:A\to N$ be a (norm-continuous) derivation from $A$ into an $A$-bimodule $N$,
and define $h:\AA\to N$ by $h(b\tp c) = b\cdot D(c)$.

\begin{YCnum}
\item\label{li:basic-identity}
 For all $a,b,c\in A$ we have
$a \cdot h(b\tp c) 
=  h(ab\tp c)$ and
$h(b\tp c)\cdot a = h(b\tp ca) - bc\cdot D(a)$.

\item\label{li:surprisingly-WAP}
 $h^*(\AWAPA(N^*))\subseteq \AWAPA((\AA)^*)$.
\end{YCnum}
\end{lem}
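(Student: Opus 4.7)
For part \ref{li:basic-identity}, the plan is a direct verification using the definition of $h$ and the Leibniz rule. The first identity follows from associativity of the bimodule action: $a\cdot h(b\tp c) = a\cdot(b\cdot D(c)) = (ab)\cdot D(c) = h(ab\tp c)$. The second identity uses the derivation property of $D$: writing $D(ca)=c\cdot D(a)+D(c)\cdot a$ gives
\[ h(b\tp ca) = b\cdot D(ca) = bc\cdot D(a) + b\cdot D(c)\cdot a = bc\cdot D(a) + h(b\tp c)\cdot a. \]
Rearranging yields the stated formula.

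For part \ref{li:surprisingly-WAP}, fix $\psi\in\AWAPA(N^*)$ and set $\phi=h^*(\psi)$. The plan is to compute both orbit maps of $\phi$ directly on elementary tensors in $\AA$, then verify weak compactness of each. Using part \ref{li:basic-identity} and the duality $\pair{\psi\cdot x}{n}=\pair{\psi}{x\cdot n}$, one obtains for all $a,b,c\in A$
\[ \pair{\phi\cdot a}{b\tp c} = \pair{\psi}{h(ab\tp c)} = \pair{\psi}{a\cdot h(b\tp c)} = \pair{h^*(a\cdot\psi)}{b\tp c}, \]
so $a\mapsto \phi\cdot a$ factors as $h^*\circ L^A_\psi$, which is weakly compact since $L^A_\psi$ is. For the other orbit map, the same bookkeeping together with part \ref{li:basic-identity} gives
\[ \pair{a\cdot\phi}{b\tp c} = \pair{\psi\cdot bc}{D(a)} + \pair{h^*(\psi\cdot a)}{b\tp c} . \]
The second summand factors as $h^*\circ R^A_\psi$ and is weakly compact by hypothesis on $\psi$, so the task reduces to showing that the map $K:A\to(\AA)^*$ defined by $\pair{K(a)}{b\tp c}=\pair{\psi\cdot bc}{D(a)}$ is weakly compact.

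To handle $K$, the plan is to exhibit it as a composition that picks up weak compactness from $R^A_\psi$ via Gantmacher's theorem. Set $T=D^*\circ R^A_\psi:A\to A^*$; this is weakly compact, because $R^A_\psi$ is weakly compact and $D^*$ is bounded. Precomposing with the (bounded, bilinear--induced) product map $\Delta:\AA\to A$ yields a weakly compact map $T\circ\Delta:\AA\to A^*$. A direct dualization check shows
\[ K(a) = (T\Delta)^*(\kappa_A(a)) \qquad(a\in A), \]
where $\kappa_A:A\to A^{**}$ is the canonical embedding. By Gantmacher, $(T\Delta)^*$ is weakly compact, so $K$ is weakly compact as a composition with the bounded map $\kappa_A$. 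Summing the two weakly compact pieces gives weak compactness of $a\mapsto a\cdot\phi$, and hence $\phi\in\AWAPA((\AA)^*)$.

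The main obstacle is part \ref{li:surprisingly-WAP}: the "left" orbit of $\phi$ does not split cleanly through $h^*$ alone, because the derivation $D$ has no reason to be weakly compact. The decisive move is to recognize that the obstruction term $K$ depends on $a$ only through $D(a)$ paired against the \emph{weakly compact} right-orbit of $\psi$, which --- rewritten as an adjoint --- lets Gantmacher do the work. Once this observation is in place, the rest is routine unwinding of the dual bimodule actions.
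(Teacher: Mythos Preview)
Your proof is correct and follows essentially the same strategy as the paper's. The paper phrases part~\ref{li:surprisingly-WAP} using weakly compact \emph{bilinear forms} (via Lemma~\ref{l:wap-bilin}): it shows $L_{h^*(\psi)}=L_\psi\circ(\sid\times h)$ and $R_{h^*(\psi)}=R_\psi\circ(h\times\sid)+L_\psi\circ(\Delta\times D)$, each a weakly compact bilinear form by composition. Your argument is the orbit-map translation of the same idea, with Gantmacher replacing the bilinear-form calculus. The ``obstruction'' term you call $K$ is exactly the linear version of the paper's $L_\psi\circ(\Delta\times D)$.

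One caveat: you have some left/right slips. With the duality convention you state, $\pair{\psi}{a\cdot h(b\tp c)}=\pair{\psi\cdot a}{h(b\tp c)}$, so your first display should read $h^*(\psi\cdot a)$, and your second should read $h^*(a\cdot\psi)$; correspondingly, your map $T$ should be $D^*\circ L^A_\psi$ (not $R^A_\psi$) for the verification $K(a)=(T\Delta)^*(\kappa_A(a))$ to go through. These slips are harmless for the conclusion, since both orbit maps of $\psi$ are weakly compact by hypothesis, but the displayed identities need the corrected sides.
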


\begin{proof}
Part \ref{li:basic-identity} is proved by direct calculation using the definition of $h$ and the derivation identity for~$D$.
To prove part \ref{li:surprisingly-WAP} it is convenient to use Lemma~\ref{l:wap-bilin}. Let $\psi \in \AWAPA(N^*)$ and consider the bilinear forms $L_{h^*(\psi)}: A\times \AA \to \Cplx$, $R_{h^*(\psi)}:\AA\times A\to\Cplx$. From part \ref{li:basic-identity} we have
\[ \begin{aligned}
L_{h^*(\psi)}(a,b\tp c)
 & = \pair{h^*(\psi)}{ab\tp c} \\
 & = \pair{\psi}{h(ab\tp c)} \\
 & = \pair{\psi}{a\cdot h(b\tp c)} = L_\psi(a, h(b\tp c)) \qquad(a,b,c\in A).
\end{aligned} \]
By linearity and continuity $L_{h^*(\psi)}= L_\psi\circ(\sid\times h)$, which is a weakly compact bilinear form since $L_\psi$~ is.

Part~\ref{li:basic-identity} also implies that
\[ \begin{aligned}
R_{h^*(\psi)}(b\tp c, a)
 & = \pair{h^*(\psi)}{b\tp ca} \\
 & = \pair{\psi}{h(b\tp ca)} \\
 & = \pair{\psi}{h(b\tp c)\cdot a} + \pair{\psi}{bc\cdot D(a)} \\
 & = R_\psi(h(b\tp c), a) + L_\psi(bc, D(a)) 
	 \qquad(a,b,c\in A).
\end{aligned} \]
By linearity and continuity, $R_{h^*(\psi)} = R_\psi\circ (h\times \sid) + L_\psi\circ (\Delta\times D)$, which is the sum of two weakly compact bilinear forms and so is weakly compact.
Thus $h^*(\psi)\in \AWAPA((\AA)^*)$, and this proves part~\ref{li:surprisingly-WAP}.
\end{proof}

The next lemma makes use of the map $\Theta:\FA\times\FA\to\F_A(\AA)$ that was 
 defined in the previous section (see Diagram~\ref{eq:making Theta}), as suggested by the referee in response to an earlier proof of Theorem~\ref{t:weakly-universal}.

\begin{lem}\label{l:hwap}
Let $h$ be as in Lemma~\ref{l:surprising-extension}, and define $\hwap: \F_A(\AA)\to \F_A(N)$ to be the adjoint of the map $h^*: \AWAPA(N^*)\to \AWAPA((\AA)^*)$.
\begin{YCnum}
\item\label{li:extending-nonmodule-map}
 $\hwap(\bv{b\tp c}) = \bv{h(b\tp c)}$ for all $b,c\in A$;
\item\label{li:restriction-of-hwap}
 the restriction of $\hwap$ to $\ker\DWAP$ is an
$A$-bimodule map from $\ker\DWAP$ to $\F_A(N)$.
\end{YCnum}
Moreover, if we let $\til{D}:\FA\to \F_A(N)$ be the extension of $D$ provided by Theorem~\ref{t:extend-der}, then
\[
\hwap\Theta(\bv{b},\tilc)=\bv{b}\cdot\til{D}(\tilc)
\quad\text{and}\quad
\hwap\Theta(\tilb,\bv{c})=\tilb\cdot\til{D}(\bv{c}) = \tilb\cdot\bv{D(c)}
\]
 for all $b,c\in A$ and all $\tilb$, $\tilc\in \FA$.
\end{lem}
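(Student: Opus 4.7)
The plan is to take each of the three claims in turn, with (i) coming straight from definitions, (ii) following from the pointwise ``Leibniz error term'' identity recorded in Lemma~\ref{l:surprising-extension}\ref{li:basic-identity}, and the ``moreover'' identities obtained by verifying them on the \wstar-dense subset $\eta_A(A)\times\eta_A(A)$ and then extending via the separate \wstar-continuity provided by Proposition~\ref{p:top centre idea}. Throughout, we shall use that $\hwap$ is \wswscts\ (being an adjoint) and that $\eta_{\AA}(\AA)$ is \wstar-dense in $\F_A(\AA)$.

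For part (i), I would simply unwind the definition of $\hwap$: for any $\psi\in\AWAPA(N^*)$ and any $u\in\AA$ we have $\pair{\hwap(\bv{u})}{\psi}=\pair{\bv{u}}{h^*(\psi)}=\pair{u}{h^*(\psi)}=\pair{h(u)}{\psi}=\pair{\bre{h(u)}}{\psi}$, using that the duality between $\F_A(\AA)$ and its predual extends the natural pairing $\AA\times(\AA)^*\to\Cplx$ restricted to $\AWAPA((\AA)^*)$. Thus $\hwap(\bv u)=\bre{h(u)}$ for every $u\in\AA$, which in particular gives the claim on simple tensors.

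For part (ii), I would prove the left and right $A$-module properties separately. From Lemma~\ref{l:surprising-extension}\ref{li:basic-identity} one reads off $h(a\cdot u)=a\cdot h(u)$, so on elements $\bv u$ ($u\in\AA$) we have $\hwap(a\cdot\bv u)=\bre{a\cdot h(u)}=a\cdot\hwap(\bv u)$; by normality of $\F_A(\AA)$ and $\F_A(N)$ and \wstar-continuity of $\hwap$, this extends to $\hwap(a\cdot\tilx)=a\cdot\hwap(\tilx)$ for every $\tilx\in\F_A(\AA)$. For the right-side property, the same lemma gives $h(u\cdot a)=h(u)\cdot a+\Delta(u)\cdot D(a)$, and applying $\eta_N$ and using part (i) together with the fact that $\DWAP(\bv u)=\bre{\Delta(u)}$ and $\til D(\ba)=\bre{D(a)}$ (by Theorem~\ref{t:extend-der}) yields the correction identity
\[
\hwap(\bv u\cdot a)=\hwap(\bv u)\cdot a+\DWAP(\bv u)\cdot\til D(\ba)\qquad(u\in\AA,\ a\in A).
\]
Both sides of this identity are \wswscts\ functions of $\bv u$, since $\DWAP$ is \wstar-continuous and $\F_A(N)$ is a normal dual $\FA$-bimodule, so the identity extends to all $\tilx\in\F_A(\AA)$. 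Restricting to $\ker\DWAP$ kills the correction term, giving $\hwap(\tilx\cdot a)=\hwap(\tilx)\cdot a$ as required.

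For the ``moreover'' clause, I would first verify the two identities on pairs $(\bv b,\bv c)$ with $b,c\in A$, where everything reduces to $\hwap(\bv{b\tp c})=\bre{h(b\tp c)}=\bre{b\cdot D(c)}=\bv b\cdot\bre{D(c)}=\bv b\cdot\til D(\bv c)$ via part (i). Then I would fix $b\in A$ and observe that both sides of the first identity are \wswscts\ in $\tilc\in\FA$: the left-hand side because $\Theta(\bv b,\,\cdot\,)$ is \wswscts\ (Proposition~\ref{p:top centre idea}), $\hwap$ is \wswscts, and $\til D$ is \wswscts; the right-hand side because $\til D$ is \wswscts\ and left multiplication by $\bv b$ on $\F_A(N)$ is \wswscts\ by normality. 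Since both agree on $\eta_A(A)$, which is \wstar-dense in $\FA$, they agree everywhere. The second identity follows by the analogous argument, this time fixing $c\in A$ and using that $\Theta(\,\cdot\,,\bv c)$ is \wswscts\ (again Proposition~\ref{p:top centre idea}) together with normality of right multiplication by $\bre{D(c)}$. The main subtlety in the whole proof is keeping track of the ``error term'' $\DWAP(\bv u)\cdot\til D(\ba)$ in the right-module calculation, but this is controlled cleanly once Lemma~\ref{l:surprising-extension}\ref{li:basic-identity} is in hand.
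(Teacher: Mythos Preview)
Your proposal is correct and follows essentially the same approach as the paper: part~(i) by direct calculation, part~(ii) by deriving the correction identity $\hwap(\tilw\cdot a)=\hwap(\tilw)\cdot a+\DWAP(\tilw)\cdot\bre{D(a)}$ from Lemma~\ref{l:surprising-extension}\ref{li:basic-identity} and extending by \wstar-continuity, and the ``moreover'' clause by verifying on $\eta_A(A)\times\eta_A(A)$ and extending via Proposition~\ref{p:top centre idea}. The only cosmetic difference is that the paper writes the correction term as $\DWAP(\tilw)\cdot\eta_N D(a)$ rather than $\DWAP(\tilw)\cdot\til D(\ba)$, but as you note these coincide by Theorem~\ref{t:extend-der}.
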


\begin{proof}
Part~\ref{li:extending-nonmodule-map} is a direct calculation.
For part~\ref{li:restriction-of-hwap}, let $a\in A$ and $\tilw\in \F_A(\AA)$. Using part~\ref{li:extending-nonmodule-map} of the present lemma, part \ref{li:basic-identity} of Lemma~\ref{l:surprising-extension}, and \wstar-continuity, we obtain
\[
\begin{aligned}
a\cdot\hwap(\tilw) & = \hwap(a\cdot\tilw) \\
\hwap(\tilw)\cdot a & = \hwap(\tilw\cdot a)
- \DWAP(\tilw) \cdot \eta_N D(a).
\end{aligned}
\]
Hence, given $\tilw\in\ker\DWAP$, we have $a\cdot\hwap(\tilw) = \hwap (a\cdot\tilw)$ and $\hwap(\tilw)\cdot a = \hwap(\tilw\cdot a)$ for all $a\in A$.


For the last part of the lemma: by part~\ref{li:extending-nonmodule-map} we have
\begin{equation}\label{eq:BEER}
\bv{b}\cdot \til{D}(\bv{c}) = \bv{b}\cdot \bv{D(c)} = \bv{b\cdot D(c)} = \hwap\Theta(\bv{b},\bv{c}) \quad\text{for all $b,c\in A$.} 
\end{equation}
Fixing $b\in A$, consider the two maps $\FA\to \F_A(\AA)$ defined by $\tilc\mapsto \bv{b}\cdot\til{D}(\tilc)$ and $\tilc\mapsto \hwap\Theta(\bv{b},\tilc)$. The first map is \wswscts, since $\til{D}$ is and $\F_A(\AA)$ is a dual module;
the second map is \wswscts\ by Proposition~\ref{p:top centre idea}; and they agree on $\eta_A(A)$ by \eqref{eq:BEER}. Therefore they agree on all of~$\FA$.
Similarly, if we fix $c\in A$, we consider  the two maps $\FA\to \F_A(\AA)$ defined by $\tilb\mapsto \tilb\cdot\til{D}(\bv{c})$
 and $\tilb\mapsto \hwap\Theta(\tilb,\bv{c})$. The first map is \wswscts, since $\F_A(\AA)$ is a \emph{normal} dual module;
the second map is \wswscts, again by Proposition~\ref{p:top centre idea}; and the two maps agree on $\eta_A(A)$ by \eqref{eq:BEER}. Therefore they agree on all of~$\FA$, and this completes the proof.
\end{proof}

\begin{rem}
We would have liked to construct the map $\hwap$ as some kind of extension of an existing bimodule map. The problem is that although $h\vert_{\ker\Delta}: \ker\Delta \to N$ is an $A$-bimodule map, the extension given ``by abstract nonsense'' would be $\F_A(h): \F_A(\ker\Delta)\to \F_A(N)$, and it is not obvious how to show $\F_A(\ker\Delta)$ coincides with $\ker\DWAP$.
\end{rem}

\begin{proof}[Proof of Theorem~\ref{t:weakly-universal}]
Let $\tils\defeq \Theta(\tile,\tile)$.
Then for each $a\in A$ we have
\[ d_A(a) \defeq \tils\cdot a - a\cdot\tils = \Theta(\tile, \bv{a}) -\Theta(\bv{a},\tile), \]
the second equality following from Lemma~\ref{Theta is an A-bimodule map}. It is easily checked that $d_A:A\to\F_A(\AA)$ is a derivation, which takes values in the unit-linked, normal dual $\FA$-bimodule $\ker\DWAP$ (see Remark~\ref{r:DWAP Theta is product}).

Now let $(D,\fN)\in\DERFA$.
Define $h:Z \to \fN$ by $b\tp c\mapsto b\cdot D(c)$, and let $\hwap:\F_A(\AA)\to\F_A(\fN)$ be the \wswscts\ map $\hwap:\F_A(\AA)\to \F_A(\fN)$ produced by Lemma~\ref{l:surprising-extension}. By Lemma~\ref{l:hwap},  
$\hwap\vert_{\ker\DWAP}$ is an $A$-bimodule map, and it also satisfies
\[
\hwap\Theta(\bv{b},\tilc)=\bv{b}\cdot\til{D}(\tilc)
\quad\text{and}\quad
\hwap\Theta(\tilb,\bv{c})=\tilb\cdot\til{D}(\bv{c}) = \tilb\cdot\bv{D(c)}
\]
 for all $b,c\in A$ and all $\tilb$, $\tilc\in \FA$.
(Here $\Dtild:\FA\to\F_A(\fN)$ is the \wswscts\ ``extension'' of $D$ that is provided by Theorem~\ref{t:extend-der}.)
Since $\F_A(\fN)$ is unit-linked, a standard argument for derivations into unit-linked bimodules shows us that $\Dtild(\tile)=0$.
Therefore
\[
\hwap d_A(a)
  = \hwap\Theta(\tile,\bv{a}) -\hwap\Theta(\bv{a},\tile) 
  = \tile\cdot\til{D}(\bv{a}) - \bv{a}\cdot\til{D}(\tile)
  = \bv{D(a)} \quad\text{for all $\tila\in \FA$}.
\]
So $f\defeq\veps_{\fN} \hwap\vert_{\ker\DWAP}$ is an $A$-bimodule map satisfying $f  d_A=D$, as required.
\end{proof}


\begin{thm}\label{t:WAPVD-CA}
Let $A$ be a Banach algebra. Then $\FA$ is Connes-amenable if and only if $A$ has a $\WAP$-virtual diagonal.
\end{thm}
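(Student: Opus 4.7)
The plan is to prove each direction using Theorem~\ref{t:weakly-universal} combined with the characterizations in Theorem~\ref{t:CA-via-usual} and Corollary~\ref{c:CA-via-der}.

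For the direction $(\Rightarrow)$, suppose $\FA$ is Connes-amenable. Then $\FA$ has an identity element $\tile$ (by the result of Runde cited earlier), so Theorem~\ref{t:weakly-universal} furnishes $\tils\in\F_A(\AA)$ with $\DWAP(\tils)=\tile$ together with the weakly universal derivation $d_A(a)=\tils\cdot a-a\cdot\tils$ into the contractive normal dual $\FA$-bimodule $\ker\DWAP$. By Theorem~\ref{t:CA-via-usual}\ref{li:der2}, $d_A$ must be inner, so there exists $\tilw\in\ker\DWAP$ with $\tils\cdot a-a\cdot\tils=a\cdot\tilw-\tilw\cdot a$ for every $a\in A$. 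Setting $\tilm\defeq\tils+\tilw$, a one-line rearrangement of the latter identity yields $a\cdot\tilm=\tilm\cdot a$ for all $a\in A$, while $\DWAP(\tilm)=\DWAP(\tils)=\tile$ gives $\DWAP(\tilm)\cdot a=\bv{a}$. Hence $\tilm$ is a $\WAP$-virtual diagonal for $A$.

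For the direction $(\Leftarrow)$, given a $\WAP$-virtual diagonal $\tilm$, the first step is to verify that $\DWAP(\tilm)$ is an identity for $\FA$. The defining condition $\DWAP(\tilm)\cdot a=\bv{a}$, together with \wstar-density of $\eta_A(A)$ in $\FA$ and separate \wstar-continuity of the product, yields $\DWAP(\tilm)\Arp\tila=\tila$ for every $\tila\in\FA$. Applying the $\FA$-bimodule map $\DWAP$ (which is such by Corollary~\ref{c:natural}) to the other defining condition $a\cdot\tilm=\tilm\cdot a$ gives $\bv{a}\Arp\DWAP(\tilm)=\DWAP(\tilm)\Arp\bv{a}=\bv{a}$, and the same density argument produces the remaining right-identity condition. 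With $\FA$ now unital, Theorem~\ref{t:weakly-universal} again supplies $\tils$ and $d_A$. Setting $\tilw\defeq\tilm-\tils$ one obtains $\DWAP(\tilw)=0$, and a direct computation using $a\cdot\tilm=\tilm\cdot a$ shows $a\cdot\tilw-\tilw\cdot a=d_A(a)$, so $d_A$ is inner. By weak universality, any derivation $D$ into a unit-linked contractive normal dual $\FA$-bimodule factors as $D=g\circ d_A$ for some \wswscts\ $A$-bimodule map $g$; hence $D(a)=a\cdot g(\tilw)-g(\tilw)\cdot a$ is inner, and Corollary~\ref{c:CA-via-der} delivers Connes-amenability of $\FA$.

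I do not expect any serious technical obstacle, since Theorem~\ref{t:weakly-universal} has already packaged the real work and the rest is algebraic bookkeeping. The step that deserves the most care is the verification that $\DWAP(\tilm)$ is a \emph{two-sided} identity for $\FA$, because Definition~\ref{d:WAP-VD} encodes identity-like behaviour only implicitly: one must combine both defining clauses with the bimodule-map property of $\DWAP$ and separate \wstar-continuity of multiplication in $\FA$ in order to pin it down.
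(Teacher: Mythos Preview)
Your proof is correct and follows essentially the same route as the paper's: both directions hinge on Theorem~\ref{t:weakly-universal}, producing $\tils$ with $\DWAP(\tils)=\tile$ and then showing that $d_A$ is inner (implemented by $\tilm-\tils$) exactly as the paper does, with Corollary~\ref{c:CA-via-der} closing the argument in the $(\Leftarrow)$ direction. Your treatment of why $\DWAP(\tilm)$ is a two-sided identity is more explicit than the paper's, which simply asserts that either hypothesis forces $\FA$ to be unital; otherwise the arguments are the same up to notation.
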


\begin{proof}
Either hypothesis -- Connes-amenability of $\FA$, or the existence of a $\WAP$-virtual diagonal for~$A$ -- implies that $\FA$ has an identity element, $\tile$ say. 
So by the first part of Theorem~\ref{t:weakly-universal}, there exists $\tils\in\FAZ$ such that $\DWAP(\tils)=\tile$.

Suppose $\FA$ is Connes-amenable. As $\ker\DWAP$ is a normal dual $\FA$-bimodule, and the derivation $\FA\to\ker\DWAP$, $\tila \mapsto \tils\cdot \tila - \tila\cdot \tils$ is \wswscts, this derivation must be inner. Hence there exists $\tiln\in\ker\DWAP$ with $a\cdot\tiln - \tiln\cdot a = \tils\cdot a - a\cdot \tils$ for all $a\in A$. Rearranging, we find that $\tils+\tiln$ is a $\WAP$-virtual diagonal for $A$.

Conversely, suppose $A$ has a $\WAP$-virtual diagonal~$\tilm\in\FAZ$.
 Then $\tilm-\tils\in \ker\DWAP$, and
\begin{equation}\label{eq:oldman}
a \cdot (\tilm-\tils) - (\tilm-\tils)\cdot a = - \tils\cdot a + a \cdot\tils =  \quad\text{for all $a\in A$.}
\end{equation}
Let $(D,\fN)\in\DERFA$, and let $d_A:A \to\ker\DWAP$ be the derivation $d_A(a)=\tils\cdot a -a\cdot\tils$ ($a\in A$). By Theorem~\ref{t:weakly-universal} the pair $(d_A,\ker\DWAP)$ is weakly universal for $\DERFA$, hence there exists an $A$-bimodule map $f: \ker\DWAP\to \fN$ such that $f d_A = D$. Setting $\tily = f(\tilm-\tils)\in \fN$ we deduce from \eqref{eq:oldman} that
\[ D(a) = f(a \cdot(\tilm-\tils)-(\tilm-\tils)\cdot a) = a \cdot \tily - \tily \cdot a \quad(a\in A). \]
Since $D$ is \wswscts\ and $\fN$ is normal, $D= {\rm ad}_{\tily}$. By Corollary~\ref{c:CA-via-der}, $\FA$ is Connes-amenable.
\end{proof}

\begin{rem}
As pointed out to us by the referee, work of Daws characterizes the Connes-amenability of $\FA$ by the existence of quasi-expectations for representations of $A$ on reflexive Banach spaces, see \cite[Proposition 6.15]{Daws_DBA2}.
Daws obtains this as a special case of more general results on dual Banach algebras: in one direction, a $\sigWC$-diagonal is used to construct quasi-expectations; and in the other direction (which is in our view harder), a quasi-expectation for a suitable representation is shown to give rise to a $\sigWC$-diagonal.
It seems likely that by adapting his proofs in the obvious way, one could use a \WAP-virtual diagonal to construct quasi-expectations, and obtain a \WAP-virtual diagonal from a well-chosen quasi-expectation. Since this is somewhat orthogonal to the goals of the present paper, and since we do not have anything new to add to the arguments given in \cite[\S6]{Daws_DBA2}, we will not discuss this circle of ideas.
\end{rem}

One might hope that, under suitable conditions on a Banach algebra $A$, we can lift a \WAP-virtual diagonal to obtain a virtual diagonal, thereby giving a proof (for such examples) that Connes-amenability of $\FA$ implies amenability of~$A$. The next section sets up some machinery which can assist~us.
\end{section}

\begin{section}{Diagonal-type functionals on subspaces of $(\AA)^*$}\label{s:diag-subsp}
Throughout this section, $A$ is a Banach algebra and $\Delta:\AA\to A$ is as in previous sections. We keep to our standing conventions that a bimodule over a Banach algebra always means a ``Banach bimodule'', and that a sub-bimodule is always assumed to be closed.
For convenience, we also adopt the common abbreviation ``b.a.i.'' to stand for ``bounded approximate identity''.

\begin{notn}
For several choices of closed subspace $E\subseteq A^*$, we will consider the adjoint of the map $\Delta^*: E\to (\AA)^*$. Rather than the cumbersome notation $\left(\Delta^*\vert_E\right)^*$, we shall write $\Delta_E$ for this adjoint. By abuse of notation, if $V\subseteq (\AA)^*$ is a closed subspace that contains $\Delta^*(E)$, we shall also use $\Delta_E$ to denote the adjoint of the map $\Delta^* : E \to V$; it should be clear from context which subspace $V$ is being considered. This notation, which will be used several times in Section~\ref{s:anabasis},  is in analogy with our use of $\DWAP$ to denote the adjoint of $\Delta^* : \WAP(A^*)\to \AWAPA((\AA)^*)$.
\end{notn}

\begin{dfn}\label{d:V-vd}
Let $V\subseteq (\AA)^*$ be a sub-$A$-bimodule, and let $E=(\Delta^*)^{-1}(V)\subseteq A^*$. We say $V$ is \dt{diagonally suitable} if $E$ separates points of $A$ (that is, for each non-zero $a\in A$, there exists $\psi\in E$ with $\psi(a)\neq 0$).
Given such a $V$, we say that a functional $F\in V^*$ is a \dt{$V$-virtual diagonal} for $A$ if
\begin{equation}
a\cdot F = F\cdot a \quad\text{and}\quad \pair{\Delta_E(F)\cdot a}{\phi} = \pair{\phi}{a}  \qquad(a\in A,\phi\in E).
\end{equation}
\end{dfn}

\begin{rem}\
\begin{itemize}
\item[(a)]
One could clearly make the same definition without requiring $V$ to be diagonally suitable, but then this allows trivial cases where $F=0$.
\item[(b)]
The second condition in the definition of a $V$-virtual diagonal $F\in V^*$ is rather weak.
For even if $E=(\Delta^*)^{-1}(V)$ is a norming subspace of $A^*$, and $u\in A^{**}$ satisfies
$\pair{u\cdot a}{\phi} = \pair{\phi}{a}$ for all $a\in A$ and $\phi\in E$,
this does not guarantee that $\pair{u\cdot a}{\psi}=\pair{\psi}{a}$ for all $a\in A$ and $\psi\in A^*$. 
\item[(c)]
Our notion of a $V$-virtual diagonal is distinct from the notion of a \dt{$\Phi$-virtual diagonal} that is considered in~\cite{CorGal_BA97}, which was introduced to generalize the notion of a \dt{normal virtual diagonal} for a \emph{dual} Banach algebra.
\end{itemize}
\end{rem}

\begin{eg}\
\begin{enumerate}
\item
Suppose $A$ is a non-zero Banach algebra. By the Hahn-Banach theorem, $(\AA)^*$ is itself diagonally suitable, and an $(\AA)^*$-virtual diagonal is just a virtual diagonal in the usual sense.
\item
Suppose $A$ is a non-zero Banach algebra, and let $W$ denote $\AWAPA ((\AA)^*)$. Recall that $W$ is a sub-$A$-bimodule of $(\AA)^*$, and that 
\[  \Delta^*(\FA_*)\subseteq \F_A(\AA)_* \equiv W. \]
Therefore, if $\eta_A:A\to\FA$ is injective,  $W$ is diagonally suitable.
Assume moreover that $\Delta:\AA \to A$ is surjective (this is always the case if $A$ has a b.a.i., for instance). Then by Lemma~\ref{l:patch-WAP} below,
$(\Delta^*)^{-1}(W) = \AWAPA(A^*)\equiv \FA_*$\/, and so for such $A$ a $W$-virtual diagonal for $A$ is the same thing as a \WAP-virtual diagonal in the sense of Definition~\ref{d:WAP-VD}.
\end{enumerate}
\end{eg}

\begin{lem}\label{l:patch-WAP}
Suppose that $\Delta^*:A^*\to (\AA)^*$ is bounded below (equivalently, that $\Delta: \AA \to A$ is surjective). Then
$\AWAPA(A^*) = \{ \psi \in A^* \st \Delta^*(\psi)\in \AWAPA((\AA)^*) \}$.
\end{lem}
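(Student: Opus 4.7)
The plan is to exploit the fact that $\Delta:\AA\to A$ is an $A$-bimodule map, when $\AA$ carries its standard outer bimodule structure $a\cdot(b\tp c)=ab\tp c$ and $(b\tp c)\cdot a=b\tp ca$. Indeed, an easy check on elementary tensors shows $\Delta(a\cdot u)=a\Delta(u)$ and $\Delta(u\cdot a)=\Delta(u)a$ for all $a\in A$, $u\in \AA$. Taking adjoints, $\Delta^*:A^*\to(\AA)^*$ is therefore also an $A$-bimodule map, and direct computation on elementary tensors yields the intertwining identities
\[ L^A_{\Delta^*(\psi)} = \Delta^*\circ L^A_\psi \qquad\text{and}\qquad R^A_{\Delta^*(\psi)} = \Delta^*\circ R^A_\psi \]
as bounded linear maps $A\to(\AA)^*$.

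The forward inclusion $\AWAPA(A^*)\subseteq\{\psi\st \Delta^*(\psi)\in\AWAPA((\AA)^*)\}$ then falls out immediately, and requires neither surjectivity of $\Delta$ nor the hypothesis that $\Delta^*$ be bounded below: since the weakly compact operators form an operator ideal, if $L^A_\psi$ and $R^A_\psi$ are weakly compact, then so are their post-compositions with the bounded operator $\Delta^*$, hence $\Delta^*(\psi)\in\AWAPA((\AA)^*)$.

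For the reverse inclusion I would invoke the hypothesis that $\Delta^*$ is bounded below. This makes $\Delta^*$ an injective map with closed range, hence a linear homeomorphism onto $\Delta^*(A^*)\subseteq(\AA)^*$, with a bounded linear inverse $T:\Delta^*(A^*)\to A^*$. Applying $T$ to the intertwining identities gives
\[ L^A_\psi = T\circ L^A_{\Delta^*(\psi)} \qquad\text{and}\qquad R^A_\psi = T\circ R^A_{\Delta^*(\psi)}. \]
Thus if $\Delta^*(\psi)\in\AWAPA((\AA)^*)$, the orbit maps of $\psi$ factor through weakly compact maps via the bounded operator $T$, and hence are themselves weakly compact, so $\psi\in\AWAPA(A^*)$.

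There is no serious obstacle here: the content is essentially the intertwining identities (a routine computation on elementary tensors together with continuity) plus two standard facts from Banach space theory, namely the ideal property of weak compactness and the equivalence between surjectivity of $\Delta$ and $\Delta^*$ being bounded below (via the open mapping theorem in one direction and a Hahn--Banach argument in the other).
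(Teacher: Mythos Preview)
Your proof is correct and is essentially the same as the paper's. Both exploit that $\Delta^*$ is an $A$-bimodule map to relate the orbit maps (or orbit sets) of $\psi$ and $\Delta^*(\psi)$, and then use that a bounded-below operator reflects relative weak compactness; the paper phrases this last step as a statement about images of sets under $\Delta^*$, while you phrase it via the bounded inverse $T$ on the range and the ideal property of weakly compact operators, but these are two wordings of the same fact.
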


\begin{proof}
Let $\psi\in A^*$. Since $\Delta^*:A^*\to (\AA)^*$ is an $A$-bimodule map,
\begin{equation}\label{eq:BAYLISS}
\begin{aligned}
\Delta^*( \{ a\cdot \psi \st a\in A, \norm{a}\leq 1\})
 & = \{ a\cdot\Delta^*(\psi) \st a\in A, \norm{a}\leq 1\}, \\
\Delta^*( \{ \psi\cdot a \st a\in A, \norm{a}\leq 1\})
 & = \{ \Delta^*(\psi)\cdot a \st a\in A, \norm{a}\leq 1\}.
\end{aligned}
\tag{$*$}
\end{equation}
Given two Banach spaces $X$ and $Y$ and a bounded linear map $f:X\to Y$ which is bounded below, a subset $S\subseteq X$ is relatively weakly compact if and only if $f(S)$ is. (This is a straightforward consequence of the fact that $f^*:Y^*\to X^*$ is surjective.) Lemma~\ref{l:patch-WAP} now follows from the equations \eqref{eq:BAYLISS} and the definition of weakly almost periodic elements.
\end{proof}

Recall that an $A$-bimodule $X$ is said to be \dt{neo-unital} or \dt{pseudo-unital} if each $x\in X$ can be written as $a\cdot y\cdot b$ for some $a,b\in A$ and $y\in Y$.

\begin{lem}[The essential part of a bimodule]
\label{l:ess-part}
Suppose $A$ has a b.a.i., and let $X$ be an $A$-bimodule. The subspace
$X_{\ess} \defeq \lin\{a\cdot x\cdot b \st a,b\in A, \ x\in X\}$
is a neo-unital sub-$A$-bimodule of $X$. Moreover, there exists a projection of $A$-bimodules from $X^*$ onto the sub-bimodule $X_{\ess}^\perp$.
\end{lem}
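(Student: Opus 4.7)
The plan is to handle the two parts of the lemma in sequence: first the structure of $X_\ess$ via Cohen's factorization theorem, then the projection via an explicit bimodule section of the restriction map. Bimodule-invariance of $X_\ess$ is immediate since $a_0\cdot(a\cdot x\cdot b)\cdot b_0=(a_0a)\cdot x\cdot(bb_0)$ is again a triple product. Closure and neo-unitality I would obtain from the bimodule form of Cohen's factorization theorem applied iteratively. First, Cohen on the right $A$-module $X$ gives $\overline{X\cdot A}=X\cdot A=\{x\cdot b\st x\in X,b\in A\}$, a closed sub-$A$-bimodule. Then Cohen on the left $A$-module $X\cdot A$ gives $\overline{A\cdot(X\cdot A)}=A\cdot(X\cdot A)=\{a\cdot x\cdot b\st a,b\in A,\,x\in X\}$. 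This set is therefore a closed subspace equal to $\lin\{a\cdot x\cdot b\}$ and hence to $X_\ess$, so every element of $X_\ess$ factors as a single triple product.

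For the projection, rather than build it directly, I would split the short exact sequence of $A$-bimodules
\[
0\to X_\ess^\perp\to X^*\xrightarrow{R} X_\ess^*\to 0,
\]
where $R$ is restriction to $X_\ess$, by exhibiting a bimodule section $S$; then $P\defeq\sid_{X^*}-S\circ R$ is a bimodule projection of $X^*$ onto $X_\ess^\perp$. Fix a b.a.i.\ $(e_\alpha)$ for $A$ bounded by $K$, and let $E\in A^{**}$ be a \wstar-cluster point, so that $a\Arp E=E\Arp a=a$ for all $a\in A$. For $\phi\in X^*$ set
\[
T(\phi)(x)\defeq\langle E,\;c\mapsto\langle E,\;d\mapsto\phi(c\cdot x\cdot d)\rangle\rangle\qquad(x\in X);
\]
the inner and outer functions both lie in $A^*$ (the outer by Lipschitz dependence on $c$), so $\|T(\phi)\|\le K^2\|\phi\|$. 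Two checks: (i) if $\phi\in X_\ess^\perp$ then every $\phi(c\cdot x\cdot d)=0$ and $T(\phi)=0$; (ii) if $x\in X_\ess$ is written (using the first paragraph) in neo-unital form $x=a_0\cdot y\cdot b_0$, then the inner $E$-pairing collapses via $b_0e_\beta\to b_0$ (norm) to $\phi(c\cdot x)$, and the outer via $e_\alpha a_0\to a_0$ (norm) to $\phi(x)$. From (i) and (ii) one sees that $P\defeq \sid_{X^*}-T$ is an idempotent linear map with range exactly $X_\ess^\perp$: (ii) yields $P(\phi)\vert_{X_\ess}=0$ for all $\phi$, so $P(X^*)\subseteq X_\ess^\perp$; and (i) says $P$ is the identity on $X_\ess^\perp$.

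It remains to verify that $T$ is an $A$-bimodule map, and this I expect to be the main obstacle. For the right action, $T(\phi\cdot a)(x)=\langle E, c\mapsto\langle E, d\mapsto\phi((ac)\cdot x\cdot d)\rangle\rangle$ and $(T(\phi)\cdot a)(x)=T(\phi)(a\cdot x)=\langle E, c\mapsto\langle E, d\mapsto\phi((ca)\cdot x\cdot d)\rangle\rangle$ should both reduce to the common value $\langle E, d\mapsto\phi(a\cdot x\cdot d)\rangle$: in the first case, $ae_\alpha\to a$ in norm forces the $A^*$-valued function $d\mapsto\phi((ae_\alpha)\cdot x\cdot d)$ to converge in norm (Lipschitz bound $\|\phi\|\|x\|\|ae_\alpha-a\|$) to $d\mapsto\phi(a\cdot x\cdot d)$, so the outer $E$-pairing collapses to an ordinary norm limit; in the second, one uses $e_\alpha a\to a$ in norm identically. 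The left action is handled symmetrically. The substantive point, and the source of all the technical care, is that the two $E$-integrals in $T$ are a priori only \wstar-operations and do not automatically commute with multiplication by $a\in A$; it is the \emph{norm} convergence $ae_\alpha\to a$ and $e_\alpha a\to a$ supplied by the b.a.i.\ that converts the outer $E$-pairing into a genuine norm limit past which $a$ can be slid.
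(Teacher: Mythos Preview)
Your proof is correct and is precisely the kind of argument the paper has in mind: the paper does not give its own proof but simply says ``This is a standard argument using Cohen's factorization theorem, and can be found in the proof of \cite[Proposition 1.8]{BEJ_CIBA}.'' Your two-step application of Cohen to obtain $X_{\ess}=A\cdot X\cdot A$ as single triple products, followed by the construction of $T$ via a \wstar-cluster point $E\in A^{**}$ of the b.a.i., is exactly the standard route (and essentially Johnson's). The delicate point you flag---that the bimodule property of $T$ rests on converting the outer $E$-pairing into an honest norm limit via $ae_\alpha\to a$ and $e_\alpha a\to a$---is handled correctly.
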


\begin{proof}
This is a standard argument using Cohen's factorization theorem, and can be found in the proof of \cite[Proposition 1.8]{BEJ_CIBA}.
\end{proof}

\begin{rem}\label{r:sub-of-neo}
Suppose $A$ has a b.a.i.~and $X$ is a neo-unital $A$-bimodule. It follows from a variant of Cohen's factorization theorem (see \cite[Theorem 11.10]{Bon-Dun}) that every $A$-submodule of $X$ is also neo-unital.
In particular, every $A$-submodule of $X$ is naturally a bimodule for the \emph{multiplier algebra} $M(A)$.
(For background on the \dt{multiplier algebra} of a Banach algebra, see e.g.~\cite[\S1.2]{Palmer1}, with the caveat that this source uses the older terminology of ``double centralizer algebra''. The construction of $M(A)$-actions on neo-unital $A$-bimodules can be found in \cite[\S1.d]{BEJ_CIBA}; a recent exposition is given in \cite[Theorem 3.2]{Daws_Diss10}.)
\end{rem}

\begin{notn}
To make some of the formulas which follow more legible, we adopt the convention that if $M$ is an $A$-bimodule then $M^*_{\ess}$ denotes the essential part of $M^*$, i.e.~we omit the parentheses. The dual of $M_{\ess}$, if we ever need it, will be denoted by $(M_{\ess})^*$.
\end{notn}

If $A$ has a b.a.i., it follows from the Hahn-Banach theorem and the definition of $(\,\cdot\,)_{\ess}$ that $A^*_{\ess}$ is a separating subset of~ $A^*$. Moreover, $\Delta^*(A^*_{\ess})\subseteq (\AA)^*_{\ess}$, as $\Delta^*$ is an $A$-bimodule map.  Thus $(\AA)^*_{\ess}$ is diagonally suitable, and we can consider the notion of an $(\AA)^*_{\ess}$-virtual diagonal. But before proceeding, we should identify $(\Delta^*)^{-1}\left[(\AA)^*_{\ess}\right]$ explicitly, in view of the second part of Definition~\ref{d:V-vd}.

\begin{lem}\label{l:patch-ess}
Suppose $A$ has a~b.a.i. Then
$A^*_{\ess} = \{ \psi \in A^* \st \Delta^*(\psi)\in (\AA)^*_{\ess} \}$.
\end{lem}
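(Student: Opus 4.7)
The inclusion $A^*_{\ess} \subseteq \{\psi \in A^* \st \Delta^*(\psi) \in (\AA)^*_{\ess}\}$ is immediate, because $\Delta^*:A^*\to (\AA)^*$ is an $A$-bimodule map and hence carries the essential sub-bimodule of the domain into the essential sub-bimodule of the codomain. So the substance of the lemma is the reverse inclusion.

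The key tool I would use is the following standard consequence of Cohen's factorization theorem for a Banach $A$-bimodule $X$, when $A$ has a b.a.i.~$(e_\alpha)$: an element $x\in X$ lies in $X_{\ess}$ if and only if $\lVert e_\alpha\cdot x\cdot e_\alpha - x\rVert \to 0$. (One direction is immediate from joint continuity of the module action, together with closedness of $X_{\ess}$; the other follows from the fact that $e_\alpha\cdot x\cdot e_\alpha\in X_{\ess}$ for every $\alpha$, so any such norm-limit lies in $X_{\ess}$.) I~would apply this characterization to both $X=A^*$ and $X=(\AA)^*$.

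Now suppose $\Delta^*(\psi)\in (\AA)^*_{\ess}$. By the characterization, $e_\alpha\cdot\Delta^*(\psi)\cdot e_\alpha \to \Delta^*(\psi)$ in norm. Since $\Delta^*$ is an $A$-bimodule map, this can be rewritten as $\Delta^*(e_\alpha\cdot\psi\cdot e_\alpha - \psi)\to 0$ in norm. The next observation is that $\Delta^*$ is bounded below: indeed, because $A$ has a b.a.i., Cohen's factorization applied to $A$ itself shows every $c\in A$ factors as $c=ab$, so $c=\Delta(a\tp b)$, meaning $\Delta:\AA\to A$ is surjective; by the open mapping theorem, the adjoint $\Delta^*$ is bounded below. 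Consequently $\lVert e_\alpha\cdot\psi\cdot e_\alpha - \psi\rVert\to 0$ in $A^*$, and applying the characterization once more yields $\psi\in A^*_{\ess}$.

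The main obstacle I anticipated before settling on this route was the temptation to try to ``lift'' a factorization $\Delta^*(\psi) = a\cdot\xi\cdot b$ of the image back to a factorization $\psi = a_1\cdot\phi\cdot b_1$ of $\psi$ itself; such a direct lifting seems awkward because the factor data of $\xi\in(\AA)^*$ does not cleanly package into an element of $A^*$. Passing to the approximate-identity characterization of the essential part sidesteps this entirely, reducing the problem to the two soft facts that $\Delta^*$ intertwines the two-sided action and that $\Delta^*$ is bounded below.
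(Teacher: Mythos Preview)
Your proof is correct and follows essentially the same approach as the paper's: both argue the nontrivial inclusion by using a b.a.i.\ $(e_\alpha)$, the fact that $\Delta^*$ is an $A$-bimodule map, and the fact that $\Delta^*$ is bounded below (since $\Delta$ is surjective when $A$ has a b.a.i.). The only cosmetic difference is that the paper checks the one-sided convergences $e_\alpha\cdot\psi\to\psi$ and $\psi\cdot e_\alpha\to\psi$ separately, whereas you package them into the two-sided version $e_\alpha\cdot\psi\cdot e_\alpha\to\psi$; both characterize membership in the essential part equally well.
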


\begin{proof}
Since $\Delta^*$ is an $A$-bimodule map, $\Delta^*( A^*_{\ess}) \subseteq (\AA)^*_{\ess}$.
Conversely, let $\psi\in A^*$ be such that $\Delta^*(\psi)\in (\AA)^*_{\ess}$. Let $(e_i)$ be a b.a.i.~for $A$: then
\begin{equation}
\label{eq:PEMBLETON}
\lim_i \norm{ e_i\cdot\Delta^*(\psi) - \Delta^*(\psi)} = \lim_i \norm{\Delta^*(\psi)\cdot e_i - \Delta^*(\psi)} = 0 .
\tag{$\dagger$}
\end{equation}
Now, $e_i\cdot\Delta^*(\psi)=\Delta^*(e_i\cdot\psi)$ and $\Delta^*(\psi)\cdot e_i = \Delta^*(\psi\cdot e_i)$. Also, since $A$ has a b.a.i, $\Delta:\AA\to A$ is surjective, so $\Delta^*:A^*\to (\AA)^*$ is bounded below. Therefore, \eqref{eq:PEMBLETON} implies that 
\[ \lim_i \norm{ e_i\cdot\psi - \psi} = \lim_i \norm{\psi\cdot e_i - \psi} = 0 .\]
It follows easily that $\psi \in A^*_{\ess}$, as required.
\end{proof}

\begin{prop}\label{p:near-VD}
Let $A$ be a Banach algebra with a~b.a.i. If $A$ has an $(\AA)^*_{\ess}$-virtual diagonal, then it has a virtual diagonal, and hence is amenable.
\end{prop}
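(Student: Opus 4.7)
The plan is to construct a genuine virtual diagonal in $(\AA)^{**}$ by averaging the given $(\AA)^*_{\ess}$-virtual diagonal $F$ against a bounded approximate identity. Fix a b.a.i.~$(e_i)_{i\in I}$ for $A$ with $K:=\sup_i\|e_i\|$, and fix an ultrafilter $\mathcal{U}$ on $I$ that refines the Fr\'echet filter (equivalently, extend $F$ by Hahn--Banach to $\tilde F\in(\AA)^{**}$ and pass to a $\wstar$-cluster point of the norm-bounded net $(e_i\cdot\tilde F\cdot e_i)$). Since $e_i\cdot\phi\cdot e_i\in(\AA)^*_{\ess}$ for every $\phi\in(\AA)^*$, the formula
\[
\pair{M}{\phi}\;:=\;\lim_{\mathcal{U}}\pair{F}{e_i\cdot\phi\cdot e_i}\qquad(\phi\in(\AA)^*)
\]
defines an element $M\in(\AA)^{**}$ of norm at most $K^2\|F\|$, independently of any Hahn--Banach extension. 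I will then verify that $M$ is a virtual diagonal for $A$ in Johnson's sense, whence amenability follows by the standard virtual-diagonal characterisation.

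To check $a\cdot M=M\cdot a$ for each $a\in A$, the key observation is that
\[
e_i\cdot(\phi\cdot a)\cdot e_i\;-\;(e_i\cdot\phi\cdot e_i)\cdot a\;=\;e_i\cdot\phi\cdot(ae_i-e_ia),
\]
which has norm at most $K\|\phi\|\,\|ae_i-e_ia\|\to 0$, and symmetrically $e_i\cdot(a\cdot\phi)\cdot e_i-a\cdot(e_i\cdot\phi\cdot e_i)\to 0$ in norm. Setting $\beta_i:=e_i\cdot\phi\cdot e_i\in(\AA)^*_{\ess}$, both $\pair{a\cdot M}{\phi}$ and $\pair{M\cdot a}{\phi}$ are thus the $\mathcal{U}$-limit of $\pair{F}{\beta_i\cdot a}$ and $\pair{F}{a\cdot\beta_i}$ respectively, and these coincide by the first defining property of an $(\AA)^*_{\ess}$-virtual diagonal.

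To check $\Delta^{**}(M)\cdot a=\kappa(a)$, I use that $\Delta^*$ is an $A$-bimodule map to rewrite, for $\psi\in A^*$,
\[
\pair{\Delta^{**}(M)\cdot a}{\psi}\;=\;\pair{M}{a\cdot\Delta^*(\psi)}\;=\;\lim_{\mathcal{U}}\pair{\Delta_E(F)}{(e_ia)\cdot\psi\cdot e_i}.
\]
Both $(e_ia)\cdot\psi\cdot e_i$ and $a\cdot\psi\cdot e_i$ lie in $A\cdot A^*\cdot A\subseteq E=A^*_{\ess}$ by Lemma~\ref{l:patch-ess}, so the factorisation $(e_ia)\cdot\psi\cdot e_i=e_i\cdot(a\cdot\psi\cdot e_i)$ lets me invoke the second defining property of $F$ with $b=e_i$ and $\phi=a\cdot\psi\cdot e_i\in E$, evaluating the inner pairing as $(a\cdot\psi\cdot e_i)(e_i)=\psi(e_i^2a)$, which tends to $\psi(a)$. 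Thus $\Delta^{**}(M)\cdot a=\kappa(a)$, so $M$ is a virtual diagonal and $A$ is amenable.

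The main subtlety is that the $(\AA)^*_{\ess}$-VD data $F$ and $\Delta_E(F)$ only see elements of the essential subspaces $(\AA)^*_{\ess}$ and $A^*_{\ess}$; the double sandwich $e_i(\blank)e_i$ is precisely the device that keeps every intermediate expression inside those subspaces, while the b.a.i.~estimates $\|ae_i-e_ia\|\to 0$ and $\|e_i^2a-a\|\to 0$ are what upgrade this essential-subspace information into the exact bimodule and counit identities demanded of a virtual diagonal on all of $(\AA)^*$ and $A^*$.
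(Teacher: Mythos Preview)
Your argument is correct and takes a genuinely different route from the paper's. The paper invokes Lemma~\ref{l:ess-part} to obtain an $A$-bimodule projection $P$ of $(\AA)^{**}$ onto $\bigl((\AA)^*_{\ess}\bigr)^\perp$, and defines $M$ as the unique preimage of $F$ under the resulting bimodule isomorphism $(I-P)(\AA)^{**}\cong \bigl((\AA)^*_{\ess}\bigr)^*$; the bimodule identity $a\cdot M=M\cdot a$ is then automatic, and the counit identity is recovered via Cohen factorisation $a=xby$, using $x\cdot M=M\cdot x$ to push both module actions onto $\psi$ so that the resulting functional $by\cdot\psi\cdot x$ lies in $(\AA)^*_{\ess}$. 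Your argument instead constructs $M$ explicitly as an ultrafilter limit of $e_i\cdot\widetilde F\cdot e_i$, bypassing the abstract splitting lemma entirely and replacing Cohen factorisation by the elementary commutator estimates $\|ae_i-e_ia\|\to 0$ and $\|e_i^2a-a\|\to 0$. The two are morally the same device (the projection in Lemma~\ref{l:ess-part} is itself built from a \wstar-limit of this kind), but your version is more self-contained and avoids appealing to the structure lemma. A minor remark: you do not actually need Lemma~\ref{l:patch-ess} here, since $\Delta^*(a\cdot\psi\cdot e_i)=a\cdot\Delta^*(\psi)\cdot e_i\in(\AA)^*_{\ess}$ already shows $a\cdot\psi\cdot e_i\in E$ by the definition of~$E$.
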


\begin{proof}
Throughout this proof, we let $V$ denote $(\AA)^*$, just to improve legibility. By the remarks before Lemma~\ref{l:patch-ess}, $V$ is diagonally suitable.
By Lemma~\ref{l:ess-part}, there is an $A$-bimodule projection $P$ from $(\AA)^{**}$ onto the sub-bimodule $V^\perp$. The natural quotient map $(\AA)^{**}\to V^*$ factors through $I-P$, yielding an isomorphism of $A$-bimodules $\imath: (I-P)(\AA)^{**} \to V^*$, which satisfies
$\imath^{-1}(T)\vert_V = T$ for all $T\in V^*$.

Now suppose $F\in V^*$ is a $V$-virtual diagonal for $A$. Let $M=\imath^{-1}(F)\in (\AA)^{**}$. Then
\begin{equation}\label{eq:boyle}
 a\cdot M = \imath^{-1}(a\cdot F) = \imath^{-1}(F\cdot a ) = M\cdot a
 \quad\text{for all $a\in A$.}
\end{equation}
Moreover, let $a\in A$ and $\psi\in A^*$. By Cohen's factorization theorem, $a=xby$ for some $x,b,y\in A$. Then
\begin{equation}\label{eq:bill}
\begin{aligned}
\pair{\Delta^{**}(M)\cdot a}{\psi}
 & = \pair{x\cdot\Delta^{**}(M)\cdot by}{\psi}  & \quad\text{(using \eqref{eq:boyle})} \\
 & = \pair{\Delta^{**}(M)}{by\cdot\psi\cdot x} \\
 & =  \pair{M}{\Delta^*(by\cdot\psi\cdot x)}. 
\end{aligned}
\end{equation}
But $\Delta^*(by\cdot\psi \cdot x) \in V$, and by its definition $M\vert_V = \imath^{-1}(F)\vert_V = F$, so
\begin{equation}\label{eq:ben}
\begin{aligned}
\pair{M}{\Delta^*(by\cdot\psi\cdot x)}
 & = \pair{F}{\Delta^*(by\cdot\psi\cdot x)}  \\
 & = \pair{\Delta^{**}(F)}{by\cdot\psi\cdot x} \\
 & = \pair{\Delta^{**}(F)\cdot b}{y\cdot \psi\cdot x} \\
 & = \pair{y\cdot\psi\cdot x}{b} , \\
\end{aligned}
\end{equation}
where the last equality holds since $\Delta^*(y\cdot\psi\cdot x) \in V$ and $F$ is a $V$-virtual diagonal.
Combining \eqref{eq:bill} and \eqref{eq:ben} gives $\pair{\Delta^{**}(M)\cdot a}{\psi} = \pair{\psi}{a}$.  Thus $M$ is a virtual diagonal for $A$.
\end{proof}

Recall that $\FA$ is Connes-amenable if and only if $A$ has a \WAP-virtual diagonal (Theorem~\ref{t:WAPVD-CA}), while by the previous proposition, $A$ is amenable if and only if it has a b.a.i.\ and an $(\AA)^*_{\ess}$-virtual diagonal.
We are therefore led to ask how the spaces $\AWAPA((\AA)^*)$ and $(\AA)^*_{\ess}$ are related.

\begin{lem}\label{l:biWAP-is-neounital}
Let $A$ be a Banach algebra with a~b.a.i. Let $X$ be a neo-unital $A$-bimodule. Then $\AWAPA(X^*)$ is neo-unital {\upshape(}and in particular is contained in $X^*_{\ess}${\upshape)}.
\end{lem}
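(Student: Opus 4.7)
The aim is to show that every $\psi \in \AWAPA(X^*)$ admits a factorization $\psi = a \cdot \phi \cdot b$ with $a,b \in A$ and $\phi \in \AWAPA(X^*)$. My plan is to first establish the inclusion $\AWAPA(X^*) \subseteq X^*_{\ess}$; once that is done, Remark~\ref{r:sub-of-neo} applies, since $\AWAPA(X^*)$ will then be a closed sub-$A$-bimodule of the neo-unital bimodule $X^*_{\ess}$ (neo-unitality of the latter being supplied by Lemma~\ref{l:ess-part}), forcing $\AWAPA(X^*)$ itself to be neo-unital. The parenthetical statement in the lemma then follows as a by-product.

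To prove the inclusion, fix a b.a.i.~$(e_i)$ for $A$ with bound $K$. Since $X$ is neo-unital, Cohen factorization gives $\lim_i \|e_i \cdot x - x\| = \lim_i \|x \cdot e_i - x\| = 0$ for every $x \in X$, so by duality $e_i \cdot \psi \to \psi$ and $\psi \cdot e_i \to \psi$ in the weak-$*$ topology for every $\psi \in X^*$. Specializing now to $\psi \in \AWAPA(X^*)$, the weak compactness of the orbit map $R^A_\psi : A \to X^*$ places the bounded net $(e_i \cdot \psi)$ inside a weakly compact subset of $X^*$. A standard subnet-extraction argument then shows that any weak cluster point of this net must coincide with its weak-$*$ limit $\psi$, so in fact $e_i \cdot \psi \to \psi$ weakly. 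Mazur's theorem then produces a bounded sequence $c_n \in A$, formed from convex combinations of the $(e_i)$ (hence $\|c_n\| \leq K$), such that $c_n \cdot \psi \to \psi$ in norm. The symmetric argument yields a bounded sequence $d_n \in A$ with $\psi \cdot d_n \to \psi$ in norm.

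The triangle inequality then gives
\[ \|c_n \cdot \psi \cdot d_n - \psi\| \;\leq\; \|c_n\|\, \|\psi \cdot d_n - \psi\| + \|c_n \cdot \psi - \psi\|, \]
which tends to zero thanks to the uniform bound $\|c_n\| \leq K$. Since each $c_n \cdot \psi \cdot d_n$ lies in $A \cdot X^* \cdot A$, which sits inside the norm-closed space $X^*_{\ess}$, the limit $\psi$ does too, giving the desired inclusion. Applying Remark~\ref{r:sub-of-neo} as outlined above then finishes the proof.

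The main obstacle is bridging from the cheap weak-$*$ convergence $e_i \cdot \psi \to \psi$ (coming from neo-unitality of $X$ alone) to genuine norm approximation of $\psi$ by elements of $A \cdot X^* \cdot A$. This is precisely where the weak compactness built into the definition of $\AWAPA(X^*)$ is essential: it allows me to upgrade weak-$*$ to weak convergence, at which point Mazur's theorem furnishes the norm-convergent convex combinations. Once the inclusion into $X^*_{\ess}$ is in place, Remark~\ref{r:sub-of-neo} handles the factorization without further work.
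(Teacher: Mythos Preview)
Your proof is correct and follows essentially the same approach as the paper: both use neo-unitality of $X$ to get weak-$*$ convergence $e_i\cdot\psi\to\psi$, invoke weak compactness of the orbit to upgrade this to weak convergence, and apply Mazur's theorem to obtain norm approximation by convex combinations. The only cosmetic difference is that the paper shows $\AWAPA(X^*)$ equals its own essential part and appeals to Lemma~\ref{l:ess-part}, whereas you prove the inclusion into $X^*_{\ess}$ and invoke Remark~\ref{r:sub-of-neo}; these are equivalent packagings of the same argument.
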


\begin{proof}
It suffices to prove that $\AWAPA(X^*)_{\ess} =\AWAPA(X^*)$, since essential $A$-bi\-modules are neo-unital (by Lemma~\ref{l:ess-part}, or a direct argument with Cohen's factorization theorem). 

Let $\phi\in \AWAPA(X^*)$, and let $(e_i)$ be a b.a.i.~for~$A$, with norm $\leq C$ say. Then $\{e_i\cdot\phi\}$ is relatively weakly compact in $X^*$, so by passing to a subnet if necessary, we may assume that there exist $\psi\in X^*$ such that $e_i\cdot\phi \to \psi$ weakly in $X^*$. On the other hand, since $X$ is neo-unital, $e_i\cdot\phi\to\phi$  \wstar\ in $X^*$. Hence $\phi=\psi$, showing that $\phi$ belongs to the weak closure of $\{e_i\cdot\phi\}$. Taking convex combinations and using Mazur's theorem, it follows that $\phi$ belongs to the norm closure of $\{a\cdot\phi \st a\in A, \norm{a}\leq C\}$.

By a similar argument, considering the net $(\phi\cdot e_i)$, we see that $\phi$ belongs to the norm closure of $\{\phi\cdot b \st b\in A, \norm{b}\leq C\}$.
Therefore $\phi$ belongs to the essential part of $\AWAPA(X^*)$, as required.
\end{proof}

\begin{rem}
The proof of Lemma~\ref{l:biWAP-is-neounital} is probably folklore, as it is a straightforward generalization of the case $A=X=\LG$.
Indeed, it has already been noted elsewhere in the literature: see \cite[Proposition 3.12]{DalLau_Beurling} and \cite[Lemma 3.1]{Daws_Diss10}, or \cite[Proposition 3.3(b)]{Lau_CM87} for a one-sided version. We have kept the proof here since it is fairly short and instructive. 
\end{rem}

We can now try to prove that in certain cases, Connes-amenability of $\FA$ implies amenability of $A$. To illustrate the method, we consider an atypically simple case: namely the semigroup algebra $\ell^1(\Nmin)$, where $\Nmin$ denotes the set of natural numbers equipped with the product $(m,n)\mapsto \min(m,n)$. The following result is \cite[Theorem 7.6]{Daws_DBA2}.

\begin{thm}[Daws]\label{t:Daws_eg}
Let $A=\ell^1(\Nmin)$. Then $\FA$ is not Connes-amenable.
\end{thm}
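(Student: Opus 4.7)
The plan is to assume that $\FA$ is Connes-amenable and derive a contradiction by testing a putative $\WAP$-virtual diagonal against explicit functionals built from the semilattice characters of $\ell^1(\Nmin)$. For each $n\in\Nat$, define $\chi_n\in A^*$ by $\chi_n(\delta_k)=1$ if $k\geq n$ and $\chi_n(\delta_k)=0$ otherwise; multiplicativity on the generators is clear from $\chi_n(\delta_k\delta_j)=\chi_n(\delta_{\min(k,j)})=\chi_n(\delta_k)\chi_n(\delta_j)$. Since $\chi_n$ is a non-zero algebra \hm\ $A\to\Cplx$, we have $a\cdot\chi_n=\chi_n\cdot a=\chi_n(a)\chi_n$ for every $a\in A$, so the orbit maps of $\chi_n$ have one-dimensional range and are in particular weakly compact. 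Hence $\chi_n\in \AWAPA(A^*)=\FA_*$. By the same rank-one argument each tensor $\chi_n\otimes\chi_m$ lies in $\AWAPA((\AA)^*)=\F_A(\AA)_*$, and a direct computation gives
\[
\delta_k\cdot(\chi_n\otimes\chi_m)=\chi_m(\delta_k)\,\chi_n\otimes\chi_m,
\qquad
(\chi_n\otimes\chi_m)\cdot\delta_k=\chi_n(\delta_k)\,\chi_n\otimes\chi_m.
\]

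Assume for contradiction that $\FA$ is Connes-amenable. By Theorem~\ref{t:WAPVD-CA} there exists a $\WAP$-virtual diagonal $\tilm\in\F_A(\AA)$; set $c_{n,m}\defeq\pair{\tilm}{\chi_n\otimes\chi_m}$. Pairing the tracial condition $\delta_k\cdot\tilm=\tilm\cdot\delta_k$ with $\chi_n\otimes\chi_m$ and using the displayed formulas yields $\chi_n(\delta_k)c_{n,m}=\chi_m(\delta_k)c_{n,m}$ for every $k\in\Nat$, which forces $c_{n,m}=0$ whenever $n\neq m$ (take $k$ strictly between $\min(n,m)$ and $\max(n,m)$). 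For the diagonal entries, multiplicativity of $\chi_n$ gives $\Delta^*(\chi_n)=\chi_n\otimes\chi_n$ in $(\AA)^*$, so $c_{n,n}=\pair{\tilm}{\Delta^*(\chi_n)}=\pair{\DWAP(\tilm)}{\chi_n}$. Using the second $\WAP$-virtual diagonal relation $\DWAP(\tilm)\cdot\delta_k=\eta_A(\delta_k)$, pairing with $\chi_n$, and taking any $k\geq n$, one obtains $\chi_n(\delta_k)\pair{\DWAP(\tilm)}{\chi_n}=\chi_n(\delta_k)=1$, whence $c_{n,n}=1$.

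To close the argument, put $e_n\defeq\chi_n-\chi_{n+1}\in\FA_*$; viewed as a function on $\Nat$, $e_n$ is the indicator of the singleton $\{n\}$. Then
\[
\phi_N\defeq\sum_{n=1}^N e_n\otimes e_n
\]
is in $\F_A(\AA)_*$ as a finite sum of such elements, and as a function on $\Nat\times\Nat$ is the indicator of $\{(n,n):1\leq n\leq N\}$; in particular $\norm{\phi_N}_{(\AA)^*}=1$. Expanding $e_n\otimes e_n=\chi_n\otimes\chi_n-\chi_n\otimes\chi_{n+1}-\chi_{n+1}\otimes\chi_n+\chi_{n+1}\otimes\chi_{n+1}$ and inserting the values of $c_{n,m}$ computed above gives
\[
\pair{\tilm}{\phi_N}
 =\sum_{n=1}^N\bigl(c_{n,n}-c_{n,n+1}-c_{n+1,n}+c_{n+1,n+1}\bigr)
 =\sum_{n=1}^N 2 \;=\; 2N,
\]
while boundedness of $\tilm$ as a functional on $\F_A(\AA)_*$ forces $\abs{\pair{\tilm}{\phi_N}}\leq\norm{\tilm}\cdot\norm{\phi_N}_{(\AA)^*}=\norm{\tilm}$. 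This contradicts $2N\to\infty$. The main technical step is really just to verify that the characters $\chi_n$, together with their rank-one tensor products $\chi_n\otimes\chi_m$, belong to the relevant $\WAP$-subspaces; once this is established the contradiction is extracted from $\tilm$ by evaluation on the explicit ``diagonal indicator'' functionals $\phi_N$, and the tracial/counit conditions do all the remaining work.
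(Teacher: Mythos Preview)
Your argument is correct and is genuinely different from the paper's. The paper proceeds indirectly: it shows (Lemma~\ref{l:nice-WAP}) that for $A=\ell^1(\Nmin)$ one has $(\AA)^*_{\ess}\subseteq\AWAPA((\AA)^*)$, so a \WAP-virtual diagonal restricts to an $(\AA)^*_{\ess}$-virtual diagonal, which by Proposition~\ref{p:near-VD} lifts to a genuine virtual diagonal; one then invokes the external fact that $\ell^1(\Nmin)$ has finite-dimensional quotients with unbounded amenability constants, so is not amenable. Your route is more self-contained: you test a putative \WAP-virtual diagonal against explicit character-tensors $\chi_n\otimes\chi_m$, obtain $c_{n,m}=\delta_{n,m}$ from the tracial and counit conditions, and then pair with the bounded diagonal indicators $\phi_N$ to force an unbounded quantity. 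This avoids both the lifting machinery of Section~\ref{s:diag-subsp} and the appeal to amenability constants from \cite{DuncNam}; what the paper's approach buys instead is a template (``show $(\AA)^*_{\ess}$ sits inside the \WAP\ part'') that it later adapts to the much harder case $A=L^1(G)$.
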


The proof in \cite{Daws_DBA2} uses results linking Connes-amenability to a suitable notion of injectivity for representations of dual Banach algebras. We can give an alternative proof, using the following observation.

\begin{lem}\label{l:nice-WAP}
Let $A=\ell^1(\Nmin)$.
\begin{YCnum}
\item
The sequence $(\delta_n)_{n\geq 1}$ is a b.a.i.~for the algebra $A$;
\item
$(\AA)^*_{\ess}  \subseteq \AWAPA((\AA)^*)$.
\end{YCnum}

\end{lem}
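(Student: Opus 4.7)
\textbf{Plan for Lemma~\ref{l:nice-WAP}.}

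Part (i) is a routine verification using the defining identity $\delta_n * \delta_k = \delta_{\min(n,k)}$: this gives $\delta_n * \delta_k = \delta_k$ whenever $n\geq k$, so $\delta_n * f = f$ for every finitely supported $f$ once $n$ exceeds the support of~$f$. Combined with norm-density of finitely supported elements in $\ell^1(\Nmin)$ and the uniform bound $\norm{\delta_n}=1$, a standard $3\veps$-argument yields $\norm{\delta_n * a - a}\to 0$ for every $a\in A$; commutativity of $\Nmin$ then handles the right-sided b.a.i.\ property.

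For part (ii), the structural feature to exploit is that for every $a\in A$ and every $m\geq 1$, both $\delta_m * a$ and $a * \delta_m$ lie in the \emph{finite-dimensional} subspace $V_m \defeq \lin\{\delta_1,\ldots,\delta_m\}$ of $A$, because $\delta_k * \delta_m = \delta_{\min(k,m)}\in V_m$ for every~$k$. The plan is to establish that $G \defeq \delta_n\cdot F\cdot\delta_m$ lies in $\AWAPA((\AA)^*)$ for every $F\in (\AA)^*$ and all $m,n\geq 1$, and then appeal to norm-closedness of $\AWAPA((\AA)^*)$ together with a density argument in~$(\AA)^*_{\ess}$. For the first step, associativity of each one-sided bimodule action on $(\AA)^*$ gives
\[ c\cdot G = (c * \delta_n)\cdot F \cdot\delta_m \quad\text{and}\quad G\cdot c = \delta_n \cdot F\cdot(\delta_m * c) \qquad (c\in A), \]
so the left orbit map of $G$ factors through $V_n$ and the right orbit map through~$V_m$. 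Both are therefore finite-rank, hence a fortiori weakly compact, so $G\in \AWAPA((\AA)^*)$.

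To close the argument, I would use the fact that by Lemma~\ref{l:ess-part} the bimodule $(\AA)^*_{\ess}$ is neo-unital, so the standard observation that a b.a.i.\ acts as a two-sided approximate identity on any neo-unital bimodule (via Cohen factorization combined with part~(i)) gives $\delta_n\cdot \phi\cdot\delta_n \to \phi$ in norm for every $\phi\in (\AA)^*_{\ess}$. Together with the previous paragraph and norm-closedness of $\AWAPA((\AA)^*)$, this yields the desired inclusion $(\AA)^*_{\ess}\subseteq\AWAPA((\AA)^*)$. I do not foresee any real obstacle; the only care point is to keep the left/right bimodule-action conventions on $(\AA)^*$ straight (the left action of $A$ on $(\AA)^*$ being dual to the right action on the second tensor factor of $\AA$), but since the argument only uses associativity of each one-sided action separately, this subtlety does not cause trouble.
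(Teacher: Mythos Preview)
Your proposal is correct. Part~(i) is essentially the same as the paper's (a direct tail estimate). For part~(ii), your route and the paper's are close in spirit but mechanically different, and it is worth recording the contrast.

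The paper takes an arbitrary $\Psi\in(\AA)^*_{\ess}$, applies Cohen factorization once to write $\Psi=a\cdot\Phi\cdot b$, and then observes that the orbit \emph{sequence} $(\delta_n\cdot\Psi)_n=((\delta_n a)\cdot\Phi\cdot b)_n$ converges in norm to~$\Psi$ by part~(i). A convergent sequence is relatively norm-compact, and passing to closed convex hulls shows $\{c\cdot\Psi:\norm{c}\leq 1\}$ is (weakly) compact; the right orbit is handled symmetrically. So the paper gets norm-compactness of the orbit directly, with no approximation step at the end.

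Your argument instead isolates the structural fact that $c*\delta_n\in V_n=\lin\{\delta_1,\dots,\delta_n\}$, so the orbit maps of any $\delta_n\cdot F\cdot\delta_m$ are finite-rank; you then close up by approximating $\phi\in(\AA)^*_{\ess}$ in norm by $\delta_n\cdot\phi\cdot\delta_n$ and invoking norm-closedness of $\AWAPA((\AA)^*)$. This is slightly more modular (the finite-rank observation is reusable and makes the weak compactness completely transparent), at the cost of an extra density-and-closure step that the paper avoids by working with the given $\Psi$ throughout. Both arguments ultimately rest on the same truncation property of~$\delta_n$ in~$\Nmin$.
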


\begin{proof}
Part (i) is straightforward, since
\[ \norm{\delta_n a - a} = \norm{a\delta_n - a} = \norm{ \sum_{k\geq n+1} a_k (\delta_n-\delta_k) } \leq 2 \sum_{k\geq n+1} \abs{a_k} \to 0 \quad\text{as $n\to\infty$}. \]
For part (ii), let $\Psi\in (\AA)^*_{\ess}$. Then (Cohen factorization) there exists $\Phi\in (\AA)^*$ and $a,b\in A$ such that $\Psi= a\cdot\Phi\cdot b$.
By part (i), $\delta_n a \to a$ as $n\to\infty$. Hence, the left $\Nmin$-orbit $\{ \delta_n \cdot \Psi \st n\in\Nmin \}$, when indexed as a sequence in the obvious way, converges in norm to $\Psi$. In particular this orbit is relatively compact, and by taking convex combinations we see that the set $\{ c\cdot \Psi \st c\in A, \norm{c}_1 \leq 1\}$ is also compact, so in particular is weakly compact. Similarly, $\{ \Psi\cdot c \st a\in A, \norm{c}_1 \leq 1\}$ is (weakly) compact, which completes the proof of~(ii).
\end{proof}

\begin{proof}[Proof of Theorem~\ref{t:Daws_eg}]
We argue by contradiction. Suppose $\FA$ is Connes-amenable. Then by Theorem~\ref{t:WAPVD-CA}, Proposition~\ref{p:near-VD} and Lemma~\ref{l:nice-WAP}, $A=\ell^1(\Nmin)$ would be amenable. But $A$ admits finite-dimensional quotients with arbitrarily large amenability constants (see \cite[Theorem 10]{DuncNam}), and this gives the desired contradiction.
\end{proof}

Naturally we would like to carry out similar arguments for other Banach algebras~$A$: in particular, as promised in the introduction, for $A=\LG$ when $G$ is a locally compact group~$G$. 
This case is much harder than $\ell^1(\Nmin)$, because in general
 the inclusion of $\AWAPA((\AA)^*)$ into $(\AA)^*_{\ess}$ is not surjective, as can be seen even for the case $A=\ell^1(\Z)$. Nevertheless, the following is true.

\begin{thm}\label{t:various-diag}
Let $G$ be a locally compact group. The following are equivalent, and characterize amenability of $G$:
\begin{YCnum}
\item\label{li:has-WAPVD}
 $\LG$ has a \WAP-virtual diagonal;
\item\label{li:has-UCVD}
 $\LG$ has an $\LIGG_{\ess}$-virtual diagonal;
\item\label{li:has-VD}
 $\LG$ has a virtual diagonal.
\end{YCnum}
\end{thm}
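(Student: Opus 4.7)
The plan is to close up the following cycle of implications, the first three of which are essentially formal, and the fourth of which is the technical heart:
(iii)$\Rightarrow$(i), \ (iii)$\Rightarrow$(ii), \ (ii)$\Rightarrow$(iii), \ (i)$\Rightarrow$(ii).
Combined with Johnson's theorem \ref{t:BEJ_VR}\ref{li:LG}$\Leftrightarrow$\ref{li:G}, which tells us that (iii) is equivalent to amenability of $G$, the full statement then follows.

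For (iii)$\Rightarrow$(i): given a virtual diagonal $M\in(\LG\ptp\LG)^{**}$, let $q:(\LG\ptp\LG)^{**}\to\F_{\LG}(\LG\ptp\LG)$ be the canonical quotient \hm\ (which is an $\LG$-bimodule map). Apply $q$ to $M$. The centrality condition $a\cdot M=M\cdot a$ passes through $q$; the condition $\Delta^{**}(M)\cdot a=a$ passes through the diagram \eqref{eq:making DWAP} relating $\Delta^{**}$ and $\DWAP$. Hence $q(M)$ is a \WAP-virtual diagonal. For (iii)$\Rightarrow$(ii), dually, restrict $M$ to the subspace $\LIGG_{\ess}\subseteq\LIGG=(\LG\ptp\LG)^*$; by an identical check the restriction is an $\LIGG_{\ess}$-virtual diagonal. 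The implication (ii)$\Rightarrow$(iii) is exactly Proposition \ref{p:near-VD}, which applies since $\LG$ has a bounded approximate identity and $(\LG\ptp\LG)^*\cong\LIGG$.

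The heart of the proof is (i)$\Rightarrow$(ii), and this is the lifting argument promised in Section \ref{s:anabasis}. We are given a \WAP-virtual diagonal $\tilm$, which we view as a functional on $\AWAPA(\LIGG)=\F_{\LG}(\LG\ptp\LG)_*$. By Lemma \ref{l:biWAP-is-neounital} we have $\AWAPA(\LIGG)\subseteq\LIGG_{\ess}$, so the task is to extend $\tilm$ to a functional $F$ on the larger subspace $\LIGG_{\ess}$ in such a way that the centrality condition and the splitting condition for a virtual diagonal are preserved on the larger space. Using Lemma \ref{l:patch-WAP} and Lemma \ref{l:patch-ess} we identify $(\Delta^*)^{-1}\AWAPA(\LIGG)=\WAPG$ and $(\Delta^*)^{-1}(\LIGG_{\ess})=\LIG_{\ess}$, so the splitting condition for $F$ reduces to checking that $\pair{\Delta_{\LIG_{\ess}}(F)\cdot a}{\phi}=\pair{\phi}{a}$ for all $a\in\LG$ and $\phi\in\LIG_{\ess}$. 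The strategy, following Runde \cite{Run_MG1}, is to use a careful decomposition of $\LIGG_{\ess}$ that takes into account the action of $\LG$ on both sides (and the structure of $G$ itself as a locally compact group), together with the structural description of the essential and WAP parts of $\LIGG$ developed in Section \ref{s:ess-and-WAP-of-LIGG}. The extension will not be produced by a bare Hahn--Banach argument --- that would destroy centrality --- but by exhibiting a canonical $\LG$-bimodule projection (or, equivalently, a module-retraction) from $\LIGG_{\ess}$ onto a subspace intermediate between $\AWAPA(\LIGG)$ and $\LIGG_{\ess}$, across which $\tilm$ can be pulled back.

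The main obstacle, and where the subtlety lies, is constructing this extension without invoking an invariant mean for $G$ (since the whole point is to give a proof that does not route through amenability of $G$ on the group-theoretic side). This is why Sections \ref{s:diag-subsp} and \ref{s:ess-and-WAP-of-LIGG} are needed: they isolate the precise structural facts about the subspaces of $\LIGG$ that make the Runde-style lifting go through, replacing the role of an invariant mean with bimodule-retraction arguments and the introversion-type properties already used in setting up $\FA$. Once $F$ is in hand, Proposition \ref{p:near-VD} finishes the implication (i)$\Rightarrow$(iii), and the equivalence with amenability of $G$ is Johnson's theorem.
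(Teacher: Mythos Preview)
Your handling of the easy implications (iii)$\Rightarrow$(i), (iii)$\Rightarrow$(ii), and (ii)$\Rightarrow$(iii) is correct and matches the paper. You also correctly identify (i)$\Rightarrow$(ii) as the technical heart, to be carried out by a Runde-style lifting.

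However, your sketch of (i)$\Rightarrow$(ii) has the mechanism backwards in a way that matters. You describe producing an $\LG$-bimodule retraction from $\LIGG_{\ess}$ onto some intermediate space $V$ with $\cW\subseteq V\subseteq\LIGG_{\ess}$, and then pulling $\tilm$ back through it. But $\tilm$ lives only on $\cW$, so you cannot evaluate it on a strictly larger $V$; and if $V=\cW$ you would need an $\LG$-bimodule retraction $\LIGG_{\ess}\to\cW$, which the paper does not (and presumably cannot, without already knowing amenability) construct. What the paper actually does is the reverse: it isolates a \emph{smaller} space $\cI\subseteq\cW$ --- the closed ideal in $\LIGG_{\ess}$ generated by $\Delta^*(C_0(G))$, shown to lie in $\cW$ by Lemma~\ref{l:LUCSC_0} --- restricts $\tilm$ to $\cI$, and then constructs a map $S:\cI^*\to(\LIGG_{\ess})^*$ (Proposition~\ref{p:G-retract}) using the identity element of the commutative $\Cst$-algebra $\cI^{**}$. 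Crucially, $S$ is only shown to be a $G_d$-bimodule map, \emph{not} an $\LG$-bimodule map; the paper explicitly flags this gap and bridges it with Lemma~\ref{l:G-diagonal}, which transfers between $G_d$-centrality and $\LG$-centrality in duals of neo-unital modules. Your sketch glosses over precisely the point where the argument is delicate.

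Two smaller corrections: the results of Section~\ref{s:ess-and-WAP-of-LIGG} are explicitly \emph{not} required for the proof (see Remark~\ref{r:what-are-they}); and the splitting condition for the lifted diagonal is handled by the specific commuting square~\eqref{eq:compatible}, which routes through $C_0(G)$ and $M(G)$ --- it is not a generic consequence of Lemmas~\ref{l:patch-WAP} and~\ref{l:patch-ess} alone.
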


Note that the implication \ref{li:has-WAPVD}$\implies$\ref{li:has-VD}, combined with the easy direction of Theorem~\ref{t:WAPVD-CA}, gives another proof that Connes-amenability of $\F(\LG)=\WAP(G)^*$ implies amenability of~$\LG$, cf.~Theorem~\ref{t:BEJ_VR} and the remarks after~it.

Some parts of Theorem~\ref{t:various-diag} are easily proved from what we already know:
 \ref{li:has-UCVD}$\implies$\ref{li:has-VD} follows by taking $A=\LG$ in Proposition~\ref{p:near-VD};
 while
 \ref{li:has-VD}$\implies$\ref{li:has-WAPVD} follows by taking a virtual diagonal for $\LG$ and restricting it to $\LWAPL$.
To complete the proof of Theorem~\ref{t:various-diag}, we need to show that \ref{li:has-WAPVD} implies~\ref{li:has-UCVD}. This is the hard part, and will be addressed in the next section.

\begin{rem}\label{r:what-are-they}
The reader may wish to have a more concrete description of $\LIGG_{\ess}$ and $\LWAPL$ as concrete subspaces of $\LIGG$. This is not too difficult, and one obtains descriptions similar to the known identifications
\[  \LIG_{\ess} = \UC(G) \quad,\quad \LGWAPLG(\LIG) = \WAP(G). \]
Here, $\UC(G)$ denotes the space of uniformly continuous bounded functions on $G$, sometimes denoted in the literature by $\UCB(G)$.
 Since we do not need these descriptions to complete the proof of Theorem~\ref{t:various-diag}, we have deferred them to Section~\ref{s:ess-and-WAP-of-LIGG}.
\end{rem}

\end{section}

\begin{section}{Obtaining a $\LIGG_{\ess}$-diagonal from a \WAP-virtual diagonal}
\label{s:anabasis}
Given a \WAP-virtual diagonal for $\LG$, we wish to produce a $\LIGG_{\ess}$-virtual diagonal. Our approach uses some results of Runde from the article \cite{Run_MG1}, and adapts some of his arguments. Thus, some of what we do is a recasting of his work into a new mold. However, since it is not always easy to find what we need, stated explicitly in the form we need, we shall repeat some of the necessary details, albeit with some technical modifications.

\begin{notn}
For convenience of notation, for the rest of this section we shall denote $\LIGG_{\ess}$ by~$\LUCR$, and $\LWAPL$ by~$\cW$.
Also, to reduce potential confusion, we shall denote the pointwise products in $\LIG$ and in $\LIGG$ by $\bullet$, and likewise we shall use $\bullet$ to denote certain adjoint module actions that are induced from pointwise product.
(The reader should beware that the symbol $\bullet$ is also used in \cite{Run_MG1}, but has a different meaning.)
We reserve the symbol $\cdot$ for the action of $\LG$ on various submodules of $\LIG$ and $\LIGG$. Although it might be more natural to use convolution notation, our choice reflects some hope of applying the arguments here to other Banach algebras.

In view of our standing convention that ``bimodule'' really means ``Banach bimodule'', we use the terminology ``$G_d$-bimodule'' to mean a Banach space that is a bimodule for $G$ in the purely algebraic sense.
This is distinct from the standard notion in the literature of a ``Banach $G$-bimodule'', where one requires the orbit maps to be continuous as functions on $G$.
\end{notn}

\subsection{Submodules for the actions of $G_d$, $M(G)$ and $\LG$.}
We need to consider not only the $\LG$-bimodule actions on $\LIGG$ and on its subspaces $\cU$ and $\cW$, but also the induced $G_d$-bimodule actions.
To fix notation, let us briefly review how this works.

Since $\LG$ has a b.a.i., there are left and right actions of the measure algebra $M(G)$ on the space $\LG$, which extend the usual left and right multiplication action of $\LG$ on itself.
 This makes $\LGG=\LG\ptp\LG$ an $M(G)$-bimodule, in a way that extends the natural action of $\LG$ on $\LG\ptp \LG$, and hence by duality $\LIGG=\LGG^*$ becomes an $M(G)$-bimodule.
Note that with this bimodule structure, the left action of $M(G)$ on $\LIGG$ acts on the \emph{second} variable of $G\times G$, and the right action acts on the \emph{first variable} in $G\times G$.

Via the inclusion of $\ell^1(G_d)$ into $M(G)$ as the subalgebra of discrete measures, $\LIGG$  is then a $G_d$-bimodule in a way that is compatible with the $\LG$-bimodule structure. Explicitly, one can check using the usual formulas for convolution of measures and using the definition of adjoint actions on the dual of a bimodule, that for each $x\in G$ we have
\begin{equation}\label{eq:G-G-action}
\begin{aligned}
 (\delta_x \cdot h )(s,t) & \overset{\rm l.a.e.}{=} h(s,tx) \quad\qquad (h\in \LIGG; s,t,\in G), \\
 (h \cdot \delta_x )(s,t) & \overset{\rm l.a.e.}{=} h(xs,t) \quad\qquad (h\in \LIGG; s,t,\in G), \\
\end{aligned}
\end{equation}
where ``l.a.e.'' stands for ``locally almost everywhere''.

We will consider various submodules of $\LIGG$. This requires the following caveat: if $X$ is a general $M(G)$-bimodule, it is both an $L^1(G)$-bimodule and a $G_d$-bimodule, but not every sub-$G_d$-module of $X$ will be a sub-$\LG$-bimodule. (For instance, take the canonical copy of $\ell^1(G_d)$ inside $X=M(G)$.)
Fortunately, things are fine if we are working inside a \emph{neo-unital} $\LG$-bimodule~$X$.
By a standard general procedure (see, e.g. Sections 1.d and 2 of \cite{BEJ_CIBA}), $X$ becomes a $M(G)$-bimodule; moreover, every sub-$G_d$-bimodule $V$ is automatically a sub-$M(G)$-bimodule, and hence a sub-$\LG$-bimodule. This follows because $V$ is neo-unital and $\ell^1(G_d)$ is dense in $M(G)$ with respect to the strict topology.

\subsection{The key space.}
Recall that $\Delta$ denotes the multiplication map $\LGG\to \LG$ (i.e.~convolution), so that $\Delta^*:\LIG\to\LIGG$ is given by $\Delta^*(f)(s,t) =f(st)$ for every $s,t\in G$.

\begin{dfn}
Let $\cI$ denote the closed ideal in $\LUCR$ generated by the subalgebra $\Delta^*(C_0(G))$, that is,
\[
\cI \defeq \overline{ \lin \{ \Delta^*(f)\bullet h \st f\in C_0(G) , h\in \LUCR \} }.
\]
It is clear from its definition, and from the formulas \eqref{eq:G-G-action}, that $\cI$ is a sub-$G_d$-bimodule of $\LUCR$. Therefore, by the previous remarks, it is a (neo-unital) sub-$\LG$-bimodule of $\LUCR$ and a sub-$M(G)$-bimodule of $\LUCR$.
\end{dfn}

The next lemma, which is crucial, collates several parts of results in \cite{Run_MG1}.
\begin{lem}[Runde]\label{l:LUCSC_0}
$\cI\subseteq\cW$.
\end{lem}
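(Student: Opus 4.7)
Since $\cW$ is a norm-closed subspace of $\LIGG$, it will suffice to show that $\Delta^*(f)\bullet h\in\cW$ for each $f\in C_0(G)$ and $h\in\LUCR$; equivalently, that both $\LG$-orbit maps $\LG\to\LIGG$ starting at $\Delta^*(f)\bullet h$ are weakly compact. As a warm-up, one first checks that $\Delta^*(C_0(G))\subseteq\cW$: the map $\Delta^*:\LIG\to\LIGG$ is an $\LG$-bimodule homomorphism, and the classical inclusion $C_0(G)\subseteq\WAP(G)$ ensures that the $\LG$-orbits of any $f\in C_0(G)$ are relatively weakly compact in $\LIG$, hence in $\LIGG$ under the bounded map $\Delta^*$. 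The point of the lemma is therefore to upgrade this observation to arbitrary pointwise products $\Delta^*(f)\bullet h$ with $h\in\LUCR$.

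The plan is to exploit the distributivity
\[
\delta_x\cdot(\Delta^*(f)\bullet h)=\Delta^*(\delta_x\cdot f)\bullet(\delta_x\cdot h)\qquad(x\in G),
\]
which one reads off directly from \eqref{eq:G-G-action}. Since $\LUCR$ is neo-unital, $x\mapsto\delta_x\cdot h$ is norm-continuous from $G$ to $\LIGG$, and $x\mapsto\delta_x\cdot f$ is norm-continuous from $G$ to $C_0(G)$. Writing the $\LG$-action on the neo-unital module $\LUCR$ as a Bochner integral then yields
\[
a\cdot\bigl(\Delta^*(f)\bullet h\bigr)=\int_G a(x)\,\Delta^*(\delta_x\cdot f)\bullet(\delta_x\cdot h)\,dx \qquad(a\in\LG).
\]
Hence the left orbit map $a\mapsto a\cdot(\Delta^*(f)\bullet h)$ factors through the closed absolutely convex hull of the set $S_{f,h}:=\{\Delta^*(\delta_x\cdot f)\bullet(\delta_x\cdot h):x\in G\}\subseteq\LIGG$. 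By the \KS\ theorem, this orbit map will be weakly compact as soon as $S_{f,h}$ is relatively weakly compact in $\LIGG$. The right orbit map is handled symmetrically.

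The main obstacle is therefore to establish relative weak compactness of $S_{f,h}$. The $G$-translates of $f$ form a relatively weakly compact subset of $C_0(G)$ (since $C_0(G)\subseteq\WAP(G)$), but the $G$-translates of $h$ need only be norm-bounded, and a pointwise product of a relatively weakly compact family with a merely bounded family is not in general relatively weakly compact. One must exploit the $C_0$-decay of $f$ quantitatively, for example by rewriting the product $\Delta^*(\delta_x\cdot f)\bullet(\delta_x\cdot h)$ as an integral kernel whose dependence on $(s,t)$ inherits the $C_0$-decay of $f$ uniformly in $x$, and then factoring the resulting operator on $\LG$ through a weakly compact convolution-type operator. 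This is precisely the technical content of Runde's estimates in \cite{Run_MG1}, which I would invoke (or adapt) to complete the argument.
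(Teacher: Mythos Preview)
Your outline is sound and tracks the paper's argument closely: both approaches reduce to showing that the $G$-orbits of $\Delta^*(f)\bullet h$ are relatively weakly compact, then upgrade this to weak compactness of the $\LG$-orbit maps. Your Bochner-integral passage from $G$-orbits to $\LG$-orbits is a legitimate alternative to the paper's route (which instead takes convex hulls and then uses strict density of $\ell^1(G_d)$ in $M(G)$ together with neo-unitality of~$\cI$ to reach $M(G)$-orbits, hence $\LG$-orbits).

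The real issue is the step you yourself flag as the main obstacle. You correctly observe that a pointwise product of a relatively weakly compact family with a merely bounded one need not be relatively weakly compact, and you then gesture at ``Runde's estimates'' and a vague factoring through a weakly compact convolution-type operator. This is exactly where the content lives, and leaving it at that level is not a proof. The paper does not attempt to reprove this either, but it pins down precisely which results from \cite{Run_MG1} do the work: one introduces the auxiliary space $\LUC\SC_0(\GGop)$ of \cite[Definition~4.2]{Run_MG1}; by \cite[Lemma~5.3]{Run_MG1} this is an ideal in $\LUCR$, and by \cite[Theorem~4.6(iii)]{Run_MG1} it contains $\Delta^*(C_0(G))$, so $\cI\subseteq\LUC\SC_0(\GGop)$. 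The key input is then \cite[Lemma~4.4]{Run_MG1}, which gives relative weak compactness of the $G$-orbits for any element of $\LUC\SC_0(\GGop)$ --- precisely your set $S_{f,h}$. So rather than sketching an ad hoc kernel argument, you should cite these specific results; that is all the paper does, and it is enough since the lemma is attributed to Runde.
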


\begin{proof}
Let $\LUC\SC_0(\GGop)$ be the closed subspace of $\LIGG$ that is defined in \cite[Definition 4.2]{Run_MG1}. By [Lemma~5.3, \ibid] it is an ideal in $\LUCR$, and by [Theorem~4.6(iii), \ibid] it contains $\Delta^*(C_0(G))$; therefore it contains $\cI$.

Now fix $h\in \cI$. Since $h\in \LUC\SC_0(\GGop)$, \cite[Lemma 4.4]{Run_MG1} implies that the $G$-orbits
 $\{\delta_x\cdot h\st x\in G\}$ and $\{h\cdot \delta_x \st x\in G\}$ are relatively weakly compact.
Since absolutely convex hulls of weakly compact sets are weakly compact, it follows that the sets $C^L_h\defeq \{ a\cdot h \st a \in \ell^1(G_d), \norm{a}\leq 1\}$ and $R^L_h\defeq \{ h\cdot a \st a\in \ell^1(G_d), \norm{a}\leq 1\}$ are relatively weakly compact.

By standard approximation results for measures, given any $\mu\in M(G)$ there is a net $(a_i)\subset \ell^1(G_d)$ with $\norm{a_i}\leq \norm{\mu}$ for all $i$ and $a_i\to \mu$ in the strict topology of $M(G)$.
See e.g.~\cite[Lemma 1.1.3]{Gre}.
 As $\cI$ is neo-unital as an $\LG$-bimodule, it follows that $\norm{a_i\cdot h - \mu\cdot h}\to 0$. Hence $\{\mu\cdot h \st \mu\in M(G), \norm{\mu}\leq 1\}$ is contained in the norm closure of $C^L_h$, and so is relatively weakly compact. Repeating the argument with left and right reversed, we see that $\{h\cdot\mu \st \mu\in M(G), \norm{\mu}\leq 1\}$ is also relatively weakly compact. As $\LG\subseteq M(G)$ this shows $h\in \cW$.
\end{proof}

\begin{rem}
In the proof of Lemma~\ref{l:LUCSC_0}, we passed from relatively weakly compact $G$-orbits to weak compactness of orbit maps $M(G)\to \cI$. The same argument works more generally: see the proof of Theorem~\ref{t:WAP-of-LIGG} in the next section. We chose to prove the special case first, since it is slightly easier: see Remark~\ref{r:not-so-easy}.
\end{rem}

\subsection{The key technical results.}
The next two results (Lemma~\ref{l:G-fixed} and Proposition~\ref{p:G-retract}) are based very closely on ideas from the proof of~\cite[Theorem 5.4]{Run_MG1}. However, it seems clearer to isolate and state them in the form we require, rather than to explain in piecemeal fashion how one modifies the proof of that theorem.

Since $\cI$ is a commutative $\Cst$-algebra, its bidual $\cI^{**}$ is unital, with identity element $P$, say. We equip $\cI^{**}$ with the natural $M(G)$-bimodule structure induced from that of~$\cI$.

\begin{lem}[Runde]\label{l:G-fixed}
Let $x\in G$. Then $\delta_x\cdot P=P=P\cdot \delta_x$. Consequently, if $(v_i)$ is a bounded net in $\cI$ that converges \wstar\ in $\cI$ to $P$, then so are $(\delta_x\cdot v_i)$ and $(v_i\cdot \delta_x)$.
\end{lem}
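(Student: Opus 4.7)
The plan is to exploit the fact that $\cI$ is a commutative $C^*$-algebra---being a closed ideal in the commutative $C^*$-algebra $\cU = \LIGG_{\ess}$ under the pointwise product $\bullet$---and to show that the left action of $\delta_x$ on $\cI$ is a $*$-automorphism. Once that is established, passing to the bidual yields a $*$-automorphism of the unital commutative von Neumann algebra $\cI^{**}$, which must fix the identity~$P$.

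To carry this out, I would first define $T\colon \cI\to\cI$ by $T(h) = \delta_x\cdot h$. Using \eqref{eq:G-G-action}, $T(h)(s,t) = h(s,tx)$, so $T$ is a linear isometric bijection with inverse given by the left action of $\delta_{x^{-1}}$; moreover, it preserves the pointwise product, since
\[ T(h_1\bullet h_2)(s,t) = h_1(s,tx)h_2(s,tx) = (T(h_1)\bullet T(h_2))(s,t). \]
To see $T$ maps $\cI$ into itself, I would check it on the generating set. For $f\in C_0(G)$, $T(\Delta^*(f))(s,t) = f((st)x) = \Delta^*(R_x f)(s,t)$ where $R_x f\in C_0(G)$ is the right translate, so $T(\Delta^*(f))\in \Delta^*(C_0(G))$; and $T$ sends $\cU$ to $\cU$ because $\cU$ is a sub-$M(G)$-bimodule of $\LIGG$. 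Combined with preservation of $\bullet$, this gives $T(\cI)\subseteq\cI$, and by symmetry $T$ is a $*$-automorphism of~$\cI$.

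Next, I would observe that the second adjoint $T^{**}\colon \cI^{**}\to\cI^{**}$ coincides with the left action of $\delta_x$ on $\cI^{**}$ via the standard dual-of-dual calculation, and is a \wstar-continuous $*$-automorphism of the unital commutative $C^*$-algebra $\cI^{**}$ (the fact that $T^{**}$ preserves the Arens product extending $\bullet$ follows from separate \wstar-continuity of that product together with \wstar-density of $\cI$ in $\cI^{**}$). Since any $*$-isomorphism between unital $C^*$-algebras preserves the identity, $\delta_x\cdot P = T^{**}(P) = P$, and the equation $P\cdot\delta_x = P$ follows by the symmetric argument applied to the right action formula in~\eqref{eq:G-G-action}.

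The ``consequently'' clause is then immediate from \wstar-continuity of $T^{**}$: for any bounded net $(v_i)$ in $\cI$ with $v_i\to P$ \wstar\ in $\cI^{**}$, one has $\delta_x\cdot v_i = T^{**}(v_i)\to T^{**}(P)=P$ \wstar, and similarly for $(v_i\cdot\delta_x)$. The only delicate point I anticipate is the verification that $T$ restricts to an endomorphism of the ideal $\cI$ (as opposed to merely of $\cU$), but as shown above this reduces to the elementary observation that right translation by $x$ preserves $C_0(G)$, so no real obstacle arises.
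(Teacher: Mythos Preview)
Your proof is correct and follows essentially the same approach as the paper: both arguments show that translation by $\delta_x$ is an algebra automorphism of $\cI$, pass to the second adjoint to obtain an automorphism of the unital algebra $\cI^{**}$, and conclude that this automorphism fixes the identity~$P$. The paper is slightly more terse (it does not re-verify that $\cI$ is $G_d$-invariant, having noted this when $\cI$ was introduced, and it uses only the bare algebra structure rather than the $*$-structure), but your added detail is harmless and the core idea is identical.
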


For convenience we give the proof.

\begin{proof}
Given $x\in G$, define $L_x: \cI\to \cI$ and $R_x:\cI\to\cI$ by $L_x(h)=\delta_x\cdot h$ and $R_x(h)=h\cdot\delta_x$\/, for $h\in \cI$. Both $L_x$ and $R_x$ are algebra automorphisms of $\cI$, with inverses $L_{x^{-1}}$ and $R_{x^{-1}}$ respectively.
The second adjoint of an algebra automorphism is always an automorphism of the second dual (with respect to either Arens product). Thus $L_x^{**}$ and $R_x^{**}$ are automorphisms of the unital algebra $\cI^{**}$, and so must fix the identity element of $\cI^{**}$, which is~$P$. This proves the first part.

Now fix $x\in G$, and suppose $(v_i)\subset\cI$ with $v_i\to P$ $\wstar$. Then for any $\phi\in \cI^*$ we have
\[ \begin{aligned} \pair{\phi}{\delta_x\cdot v_i}_{\cI^*-\cI}
 = \pair{\phi\cdot\delta_x}{v_i}_{\cI^*-\cI}
 & \to \pair{P}{\phi\cdot\delta_x}_{\cI^{**}-\cI^*} \\
 & = \pair{\delta_x\cdot P}{\phi}_{\cI^{**}-\cI^*} = \pair{P}{\phi}_{\cI^{**}-\cI^*}\;,
\end{aligned} \]
and so $\delta_x\cdot v_i\to P$ \wstar. A similar argument shows that $(v_i\cdot\delta_x)\to P$ \wstar.
\end{proof}

Consider the natural right action of $\LUCR$ on $\cI^*$ (the adjoint of the left action $\LUCR\times\cI\to\cI$ that is given by multiplication of functions).
This gives rise to a bounded, bilinear map $\cI^{**}\times \cI^* \to \LUCR^*$, denoted by $(F,\psi)\mapsto F_\psi$ for $F\in \cI^{**}$ and $\psi\in \cI^*$, and defined by
$\pair{F_\psi}{h} = \pair{F}{\psi\bullet h}$ for all $\psi\in\cI^*$ and $h\in\cI$.
(We are performing the first two stages of the canonical Arens extension of the module action of $\LUCR$ on~$\cI$.)


Recall that  $\Delta^*(\UC(G)) = \Delta(\LIG_{\ess})\subseteq \LIGG_{\ess} \equiv \cU$, giving us the adjoint map $\Delta_{\UC}:\cU^* \to \UC(G)^*$. Let $\imath: M(G)\to \UC(G)^*$ be the natural inclusion map, defined by
\begin{equation}
\pair{\imath(\mu)}{f}_{\UC(G)^*-\UC(G)} \defeq \int_G f\,d\mu \qquad (\mu\in M(G), f\in \UC(G)).
\end{equation}

\begin{prop}\label{p:G-retract}
There is a $G_d$-bimodule map $S:\cI^*\to \LUCR^*$ which makes the following diagram commute.
\begin{equation}\label{eq:compatible}
 \begin{diagram}[tight,height=2.5em,width=4.5em]
C_0(G)^* & \lTo^{\Delta_{C_0}} & \cI^* \\
\dTo^{\imath} & & \dTo_S \\
\UC(G)^* & \lTo_{\Delta_{\UC}} & {\LUCR}^*
\end{diagram}
\end{equation}
\end{prop}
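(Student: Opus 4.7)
My plan is to take $S(\psi) := P_\psi$, where $P \in \cI^{**}$ is the identity element, so that
\[ \pair{S(\psi)}{h}_{{\LUCR}^*\text{-}\LUCR} = \pair{P}{\psi \bullet h}_{\cI^{**}\text{-}\cI^*} \qquad (\psi \in \cI^*,\ h \in \LUCR). \]
This is clearly bounded and linear; what remains is to verify the diagram commutes and $S$ is a $G_d$-bimodule map.

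For the commutativity, I would exploit that $\Delta^*$ restricts to an isometric $*$-homomorphism $C_0(G) \to \cI$ and that $\cI$ is the closed ideal in $\LUCR$ generated by $\Delta^*(C_0(G))$. Consequently, for any b.a.i.\ $(f_\alpha) \subset C_0(G)$ of norm $\leq 1$, the net $v_\alpha := \Delta^*(f_\alpha)$ is a b.a.i.\ for the commutative $C^*$-algebra $\cI$, so $v_\alpha \to P$ \wstar\ in $\cI^{**}$. For $f \in \UC(G)$, $\psi \in \cI^*$, and $\mu := \Delta_{C_0}(\psi) \in M(G)$, I would then compute
\[ \pair{\Delta_{\UC}(S(\psi))}{f} = \pair{P}{\psi \bullet \Delta^*(f)} = \lim_\alpha \pair{\psi}{\Delta^*(f) \bullet v_\alpha} = \lim_\alpha \pair{\mu}{f f_\alpha}, \]
using $\Delta^*(f) \bullet \Delta^*(f_\alpha) = \Delta^*(f f_\alpha)$ and the fact that $f f_\alpha \in C_0(G)$. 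Dominated convergence ($|ff_\alpha| \le \|f\|_\infty$, $f_\alpha \to 1$ locally uniformly, $|\mu|$ a bounded Radon measure) yields $\int_G f\,d\mu = \pair{\imath(\mu)}{f}$, as required.

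For the $G_d$-bimodule property, the key algebraic identity, immediate from \eqref{eq:G-G-action}, is
\[ (h \bullet v) \cdot \delta_x = (h \cdot \delta_x) \bullet (v \cdot \delta_x) \qquad (h \in \LUCR,\ v \in \cI,\ x \in G). \]
Fixing $\psi \in \cI^*$, $h \in \LUCR$, $x \in G$, and letting $T: \cI \to \cI$ denote pointwise multiplication by $h \cdot \delta_x$, a routine unpacking of the adjoint actions (using the displayed identity to exchange $v \cdot \delta_x$ for $v$) produces
\[ (\delta_x \cdot \psi) \bullet h = \delta_x \cdot T^*\psi \quad \text{and} \quad \psi \bullet (h \cdot \delta_x) = T^*\psi \quad \text{in } \cI^*. \]
Then, invoking $P \cdot \delta_x = P$ from Lemma~\ref{l:G-fixed},
\[ \pair{S(\delta_x \cdot \psi)}{h} = \pair{P}{\delta_x \cdot T^*\psi} = \pair{P \cdot \delta_x}{T^*\psi} = \pair{P}{T^*\psi} = \pair{\delta_x \cdot S(\psi)}{h}, \]
and a mirror-image argument using $\delta_x \cdot P = P$ delivers $S(\psi \cdot \delta_x) = S(\psi) \cdot \delta_x$. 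The main obstacle will be bookkeeping: tracking the several coexisting adjoint module actions and confirming that the commutation identity above is being applied in a permissible context. Once that is isolated, the $G$-fixedness of $P$ supplied by Lemma~\ref{l:G-fixed} does all the real work.
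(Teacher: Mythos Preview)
Your proposal is correct and follows essentially the same approach as the paper: the same definition $S(\psi)=P_\psi$, the same algebraic identity $(h\bullet v)\cdot\delta_x=(h\cdot\delta_x)\bullet(v\cdot\delta_x)$, the same appeal to Lemma~\ref{l:G-fixed}, and the same use of a $C_0(G)$ b.a.i.\ pushed through $\Delta^*$ together with a dominated-convergence argument for the diagram. The only cosmetic difference is that for the $G_d$-bimodule property you invoke $P\cdot\delta_x=P$ directly via the adjoint map $T^*$, whereas the paper phrases the same step with nets $(v_i)$ and uses the ``$v_i\cdot\delta_x\to P$'' part of Lemma~\ref{l:G-fixed}; these are the same computation in different clothing.
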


\begin{proof}
Let $P$ be the identity element of $\cI^{**}$, and define $S(\phi)=P_\phi\in \LUCR^*$ for each $\phi\in \cI^*$. Thus, for any $h\in \LUCR$, and \emph{any} bounded net $(v_i)\subset \cI$ which converges to $P$ in the $\wstar$-topology of $\cI^{**}$, we have
\[
 \pair{S(\phi)}{h}_{\LUCR^*-\LUCR} = \lim_i \pair{\phi}{h\bullet v_i}_{\cI^*-\cI}\;.
\]

Fix $x\in G$. Then
\[ \begin{aligned}
\pair{S(\delta_x\cdot \phi)}{h}_{\LUCR^*-\LUCR}
 & = \lim_i \pair{\delta_x\cdot \phi}{h\bullet v_i}_{\cI^*-\cI} \\
 & = \lim_i \pair{\phi}{(h\bullet v_i)\cdot\delta_x}_{\cI^*-\cI} \\
 & = \lim_i \pair{\phi}{(h\cdot\delta_x)\bullet (v_i\cdot\delta_x)}_{\cI^*-\cI} 
 & = \lim_i \pair{\phi\bullet(h\cdot\delta_x) }{ v_i\cdot\delta_x}_{\cI^*-\cI} \;.
\end{aligned}\]
But by Lemma~\ref{l:G-fixed}, the net $(v_i\cdot\delta_x)$ also converges to $P$ in the \wstar-topology of $\cI^{**}$. Hence
\[
\lim_i \pair{\phi\bullet(h\cdot\delta_x)}{ v_i\cdot\delta_x }_{\cI^*-\cI} 
= \pair{S(\phi)}{h\cdot\delta_x}_{\LUCR^*-\LUCR} 
= \pair{\delta_x\cdot S(\phi)}{h}_{\LUCR^*-\LUCR} \,.
\]
Combining the previous equations, we see that $S$ is a left $G_d$-module map. A similar argument, with left and right switched, shows that $S$ is a right $G_d$-module map.


We need to show that the diagram in \eqref{eq:compatible} commutes. Let $\phi\in \cI^*$ and let $\mu=\Delta_{C_0}(\phi)\in C_0(G)^* = M(G)$. Then
\[ \pair{\imath\Delta_{C_0}(\phi)}{f}_{\UC(G)^*-\UC(G)} = \int_G f \,d\mu \qquad(f\in \UC(G)). \]
Now let $(u_i)\subset C_0(G)$ be a b.a.i.\ and let $f\in \UC(G)$.
The net $(\Delta(u_i))$ is a b.a.i.\ for $\Delta^*(C_0(G))$, and hence also for $\cI$.
Passing to a subnet if necessary, we may assume that $\Delta^*(u_i)\to P$ \wstar\ in $\cI^{**}$. Therefore,
\[ \begin{aligned}
\pair{S(\phi)}{\Delta^*(f)}_{\LUCR^*-\LUCR}
 & = \lim_i \pair{\phi}{\Delta^*(f)\bullet \Delta^*(u_i)}_{\cI^*-\cI}  \\
 & = \lim_i \pair{\phi}{\Delta^*(f \bullet u_i)}_{\cI^*-\cI} \\
 & = \lim_i \pair{\Delta_{C_0}(\phi)}{f\bullet u_i}_{C_0(G)^*-C_0(G)} \\
 & = \lim_i \int_G f\bullet u_i \,d\mu 
 & = \int_G f\,d\mu\;,
\end{aligned} \]
where the last equality holds because $\mu$ is a \emph{finite} Radon measure on $G$. (Note that in general, $u_i\bullet f$ does not converge to $f$ in norm.) Thus,
\[ \pair{\Delta_{\UC}(S(\phi))}{f}_{\UC(G)^*-\UC(G)} = \pair{\imath(\Delta_{C_0}(\phi))}{f}_{\UC(G)^*-\UC(G)} \]
for all $\phi\in\cI^*$ and $f\in\UC(G)$, as required.
\end{proof}

We would really like the map $S$ constructed in Proposition~\ref{p:G-retract} to be not just a $G_d$-bimodule map, but an $L^1(G)$-bimodule map. It is not clear to us if this is always possible, since even though $\cI^*$ is an $\LG$-bimodule, it is usually not neo-unital.
This may be related to an issue left unaddressed\footnotemark\ in~\cite{Run_MG1}.
\footnotetext{At the top of p.~655 in \cite{Run_MG1}, it is claimed that a certain embedding of $\mathcal{LUC}(\GGop)$ into $\mathcal{LUCSC}_0(\GGop)^{**}$ is a $M(G)$-bimodule map. This appears to need extra justification. However, at that point in \cite{Run_MG1} one only requires that this embedding is an $\ell^1(G_d)$-bimodule map, which is what is shown.}
However, we can sidestep this obstacle using the following lemma.

\begin{lem}\label{l:G-diagonal}
Let $\cV$ be a neo-unital $\LG$-bimodule, so that $\cV$ and $\cV^*$ are $M(G)$-bimodules, and let $\tils\in\cV^*$. Then
the following are equivalent:
\begin{YCnum}
\item\label{li:G-diag}
$\delta_x\cdot \tils = \tils\cdot \delta_x$ for all $x\in G$;
\item\label{li:M(G)-diag}
$\mu\cdot\tils = \tils\cdot\mu$ for all $\mu\in M(G)$.
\item\label{li:L^1(G)-diag}
$a\cdot\tils=\tils\cdot a$ for all $a\in \LG$.
\end{YCnum}
\end{lem}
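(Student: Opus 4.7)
The plan is to establish the equivalence by routing the non-obvious directions through a strict-topology approximation argument, using neo-unitality in an essential way.

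First, the implications (ii)$\Rightarrow$(i) and (ii)$\Rightarrow$(iii) are immediate, since for each $x\in G$ we have $\delta_x\in M(G)$, and $\LG$ embeds canonically into $M(G)$ as the ideal of absolutely continuous measures, so condition (ii) simply specializes to each of them.

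For the converses, the key step is to prove the following auxiliary continuity statement: if $(\mu_\alpha)$ is a bounded net in $M(G)$ with $\mu_\alpha \to \mu$ in the strict topology, then $\mu_\alpha\cdot\tils \to \mu\cdot\tils$ and $\tils\cdot\mu_\alpha\to \tils\cdot\mu$ in the \wstar-topology of $\cV^*$. This is where neo-unitality does the work: by (a variant of) Cohen factorization, every $v\in\cV$ can be written as $v=a\cdot w\cdot b$ with $a,b\in\LG$, $w\in\cV$, and then by the defining formula for the $M(G)$-action on a neo-unital $\LG$-bimodule (as set up at the start of Section~\ref{s:anabasis}), we have $v\cdot\mu_\alpha = a\cdot w\cdot(b*\mu_\alpha)$. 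Strict convergence gives $b*\mu_\alpha\to b*\mu$ in $\LG$-norm, hence $v\cdot\mu_\alpha\to v\cdot\mu$ in $\cV$-norm, and pairing with $\tils$ yields $\pair{\mu_\alpha\cdot\tils}{v} \to \pair{\mu\cdot\tils}{v}$. The right-module side is analogous.

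With this continuity fact in hand, the two remaining implications are routine density arguments. For (i)$\Rightarrow$(ii): given $\mu\in M(G)$, the same application of \cite[Lemma~1.1.3]{Gre} invoked in the proof of Lemma~\ref{l:LUCSC_0} yields a net $(a_i)\subset \ell^1(G_d)$ with $\norm{a_i}\leq\norm{\mu}$ and $a_i\to\mu$ strictly; hypothesis (i) extends by linearity to $a_i\cdot\tils = \tils\cdot a_i$ for every $i$, and passing to the \wstar-limit via the auxiliary statement gives $\mu\cdot\tils=\tils\cdot\mu$. For (iii)$\Rightarrow$(ii): fix any b.a.i.\ $(e_i)$ for $\LG$ and set $\mu_i\defeq \mu*e_i$; since $\LG$ is a two-sided ideal of $M(G)$, $\mu_i\in\LG$; the net is bounded, and $\mu_i\to\mu$ strictly by the standard defining property of a b.a.i.\ (for each $f\in\LG$, $(\mu*e_i)*f = \mu*(e_i*f)\to\mu*f$ and $f*(\mu*e_i) = (f*\mu)*e_i \to f*\mu$ in norm). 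Applying (iii) to each $\mu_i$ and passing to the \wstar-limit completes the proof.

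The only substantive point is the auxiliary \wstar-continuity statement, and its proof depends crucially on the neo-unitality of $\cV$: without the factorization $v=a\cdot w\cdot b$, the action of $M(G)$ on $\cV$ is not even defined, and the strict-convergence input cannot be applied. Everything else is a bookkeeping exercise in strict density.
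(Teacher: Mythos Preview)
Your proof is correct and follows essentially the same approach as the paper: both route the non-trivial implications through the strict-to-\wstar\ continuity of the orbit maps $M(G)\to\cV^*$, which is exactly what neo-unitality provides. The only organizational difference is in (iii)$\Rightarrow$(ii): you approximate $\mu$ strictly by $\mu*e_i\in\LG$ and invoke your auxiliary lemma, whereas the paper inserts the b.a.i.\ on the $\cV$ side and computes $\pair{\mu\cdot\tils}{\phi}=\lim_i\pair{\tils}{(e_i\cdot\mu)\cdot\phi}$ directly; these are equivalent manoeuvres, and your packaging is arguably cleaner since the same auxiliary statement handles both density arguments uniformly.
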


\begin{proof}[Proof that \ref{li:G-diag}$\implies$\ref{li:M(G)-diag}.]
Since $\cV$ is neo-unital for $\LG$, a short calculation shows the orbit maps of $\tils$ are strict-to-\wstar\ continuous as maps $M(G)\to \cV^*$. Moreover, the natural embedding $\ell^1(G_d)\hookrightarrow M(G)$ has strictly dense range. So \ref{li:M(G)-diag} follows from \ref{li:G-diag} by density.

\paragraph{\it Proof that \ref{li:M(G)-diag}$\implies$\ref{li:L^1(G)-diag}.} This is trivial.

\paragraph{\it Proof that \ref{li:L^1(G)-diag}$\implies$\ref{li:G-diag}.}
Suppose \ref{li:L^1(G)-diag} holds, and let $\phi\in \cV$. We have
\begin{equation}\label{eq:ennui}
\tag{$**$}
\pair{\tils}{\phi\cdot a} = \pair{a\cdot\tils}{\phi} = \pair{\tils\cdot a}{\phi} = \pair{\tils}{a\cdot\phi}\quad(a\in \LG).
\end{equation}

Now let $(e_i)$ be a b.a.i.~in $\LG$. Since $\cV$ is a neo-unital $\LG$-bimodule, for any $\tilx\in \cV^*$ we have
$\wstar\lim_i \tilx\cdot e_i = \wstar\lim_ie_i\cdot\tilx=\tilx$.
Hence
\[ \begin{aligned}
\pair{\mu\cdot\tils}{\phi}_{\cV^*-\cV}
  = \lim_i \pair{\mu\cdot\tils}{\phi\cdot e_i}_{\cV^*-\cV} 
 & = \lim_i \pair{\tils}{\phi\cdot e_i\cdot\mu}_{\cV^*-\cV} \\
 & = \lim_i \pair{\tils}{(e_i\cdot\mu)\cdot\phi}_{\cV^*-\cV} & \quad\text{(by \eqref{eq:ennui})} \\
 & = \pair{\tils}{\mu\cdot\phi}_{\cV^*-\cV}
 \quad= \pair{\tils\cdot\mu}{\phi} \;.
\end{aligned} \]
Thus \ref{li:M(G)-diag} holds. This completes the proof of Lemma~\ref{l:G-diagonal}.
\end{proof}

\subsection{The proof of Theorem~\ref{t:various-diag}, \ref{li:has-WAPVD}$\implies$\ref{li:has-UCVD}.}
Let $\tilm\in \F_{\LG}(\LGG)= \cW^*$ be a \WAP-virtual diagonal for $\LG$. Then for any $a\in\LG$ and $\phi\in\WAP(\LIG)$,
$a\cdot\tilm =\tilm\cdot a$ and
\[
\pair{\DWAP(\tilm)\cdot a}{\phi}_{\WAP(\LIG)^*-\WAP(\LIG)}
 = \pair{\phi}{a}_{\WAP(\LIG)-\LG} \;.
\]

By Lemma~\ref{l:LUCSC_0}, $\Delta^*(C_0)\subseteq \cI \subseteq \cW$, and so $\tilm$ can be restricted to a functional on~$\cI$. Let $S:\cI^*\to \LUCR^*$ be the map provided by Proposition~\ref{p:G-retract}, and put $\tiln = S( \tilm\vert_{\cI} )\in \LUCR^*$. We will show $\tiln$ is a $\LUCR$-virtual diagonal.

Let $x\in G$. By Lemma~\ref{l:G-diagonal}, $\delta_x\cdot (\tilm\vert_{\cI}) = (\tilm\vert_{\cI})\cdot\delta_x$. By Proposition~\ref{p:G-retract}, $S$ is a $G_d$-bimodule map, and so
\[ \delta_x\cdot \tiln = S\left( \delta_x\cdot (\tilm\vert_{\cI})\right) = S\left( (\tilm\vert_{\cI})\cdot\delta_x\right)  = \tiln\cdot\delta_x. \]
Using Lemma~\ref{l:G-diagonal} in the other direction, we have $a\cdot\tiln=\tiln\cdot a$ for all $a\in\LG$.

Let $\mu = \Delta_{C_0}(\tilm\vert_{\cI})\in M(G)$.
Since $\tilm$ is a \WAP-virtual diagonal, $\mu\cdot a =a$ as elements of $\LG$, for all $a\in \LG$, hence $\mu=\delta_e$. 
Therefore, by Proposition~\ref{p:G-retract},
\[ \Delta_{\UC}(\tiln) = \imath(\Delta_{C_0}(\tilm\vert_{\cI})) =\imath(\delta_e). \]
For each $a\in \LG$, $\imath(\delta_e)\cdot a = a$ as elements of $\UC(G)^*$: one can either check this by a direct calculation, or else observe that $\imath(\delta_e)\cdot a$ coincides with the Arens-type product of $\imath(\delta_e)$ and $a$ in the algebra $\UC(G)^*$, which is known to be unital with identity $\imath(\delta_e)$. Hence
$\pair{\Delta_{\UC}(\tiln)\cdot a}{\psi}_{\UC(G)^*-\UC(G)} = \pair{\psi}{a}_{\UC(G)-\LG}$
for all $a\in \LG$ and all $\psi\in \UC(G)$,
which, in view of Lemma~\ref{l:patch-ess}, completes the proof that $\tiln$ is a $\LUCR$-virtual diagonal.
\hfill$\Box$
\end{section}

\begin{section}{The essential and 2-sided-$\WAP$ parts of $L^\infty(\GG)$}
\label{s:ess-and-WAP-of-LIGG}
In this section we obtain natural ``bivariate'' counterparts  (Theorems~\ref{t:ess-of-LIGG} and \ref{t:WAP-of-LIGG} below) of the existing, standard characterizations of $\LIG_{\ess}$ and $\LGWAPLG(\LIG)$, see Remark~\ref{r:what-are-they}.
As in the previous section, we denote the product in the algebra $\LG$ by juxtaposition and the induced action on $\LIG$ and $\LIGG$ by a dot, reserving $\bullet$ for the pointwise product of functions.
In order to fix notation and remove potential ambiguity, we include the following definition.

\begin{dfn}
Given a locally compact group $H$, we define $\LUC(H)$ to be the space of all $f\in \CB(H)$ for which the map $f\mapsto f\cdot\delta_x$, defined by $(f\cdot\delta_x)(t)\defeq f(xt)$, is continuous as a function $H\mapsto (\CB(H),\norm{\cdot}_\infty)$. {\bf Warning:}~in some older sources, this space is denoted by $\operatorname{RUC}(H)$ or $\UC_r(H)$, but the present notation appears to be more commonly used in recent work.
\end{dfn}

It is a classical result that $\LUC(H)$ coincides with $L^\infty(H)\cdot L^1(H)$. Armed with this, we can now state our characterization of $\LIGG_{\ess}$. The reader may find it helpful to recall that when we consider $\LG\cdot \LIGG\cdot \LG$, the left action operates on the second variable of $\GG$, and the right action on the first variable, so that this space naturally contains $(\LIG\cdot\LG)\otimes(\LG\cdot\LIG)$.

\begin{thm}\label{t:ess-of-LIGG}
 $\LIGG_{\ess}= \LUC(\GGop)$.
\end{thm}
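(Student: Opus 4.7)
The plan is to prove both inclusions separately; each reduces to a classical one-variable argument once the $\GGop$-right action on $\LIGG$ is identified correctly. The crucial preliminary observation is that, for any $f\in\LIGG$ and $(x_1,x_2)\in\GGop$, the combined $\LG$-bimodule action $\delta_{x_2}\cdot f\cdot\delta_{x_1}$ (read off from \eqref{eq:G-G-action}) sends $(s,t)$ to $f(x_1 s,t x_2)$, which is exactly $f\cdot\delta_{(x_1,x_2)}$ in the $\GGop$-right action on $\CB(\GG)$, using the product $(x_1,x_2)(s,t)=(x_1 s,t x_2)$ in $\GGop$. Consequently, continuity of the $\GGop$-orbit of $f$ in $\CB(\GG)$ is equivalent to norm-continuity in $\LIGG$ of the map $\GG\to\LIGG$, $(x_1,x_2)\mapsto\delta_{x_2}\cdot f\cdot\delta_{x_1}$.

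For the inclusion $\LIGG_{\ess}\subseteq\LUC(\GGop)$, take $\phi\in\LIGG_{\ess}$. Since $\LIGG_{\ess}$ is neo-unital as an $\LG$-bimodule (Lemma~\ref{l:ess-part}), Cohen factorization applied on each side yields $a,b\in\LG$ and $h\in\LIGG$ with $\phi=a\cdot h\cdot b$. Associativity of the $M(G)$-bimodule action on $\LIGG$ gives
\[ \delta_{x_2}\cdot\phi\cdot\delta_{x_1} \;=\; (\delta_{x_2}\ast a)\cdot h\cdot(b\ast\delta_{x_1}), \]
and strong continuity of translation in $\LG$ makes the right-hand side norm-continuous in $(x_1,x_2)$. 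In particular $\delta_{x_2}\cdot\phi\cdot\delta_{x_1}\to\phi$ in sup-norm as $(x_1,x_2)\to(e,e)$; convolving $\phi$ with a b.a.i.\ of $\LG$ on both sides produces functions $e_\alpha\cdot\phi\cdot e_\alpha$ which are bounded and continuous (by pairing with tensor elements of $\LGG$ that vary norm-continuously in the arguments) and which approximate $\phi$ uniformly. Hence $\phi$ has a bounded continuous representative on $\GG$, and together with orbit continuity this gives $\phi\in\LUC(\GGop)$.

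For the inclusion $\LUC(\GGop)\subseteq\LIGG_{\ess}$, let $f\in\LUC(\GGop)$ and pick a b.a.i.\ $(e_\alpha)\subseteq\LG$ of nonnegative functions of integral $1$ whose supports shrink to $\{e\}$. Fubini together with the bimodule action formulas of \eqref{eq:G-G-action} gives
\[ (e_\alpha\cdot f\cdot e_\alpha)(s,t) \;=\; \iint e_\alpha(y)\,e_\alpha(x)\,f(ys,tx)\,dx\,dy, \]
and since $e_\alpha\otimes e_\alpha$ has total integral $1$ we may rewrite $(e_\alpha\cdot f\cdot e_\alpha-f)(s,t)$ as the corresponding integral of $f(ys,tx)-f(s,t)$. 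Given $\epsilon>0$, the $\LUC(\GGop)$ property provides a neighborhood $U$ of $(e,e)$ on which $\|f\cdot\delta_{(y,x)}-f\|_\infty<\epsilon$; once $\operatorname{supp}(e_\alpha)\times\operatorname{supp}(e_\alpha)\subseteq U$, the pointwise bound $|f(ys,tx)-f(s,t)|<\epsilon$ holds throughout the support of $e_\alpha(y)e_\alpha(x)$, yielding $\|e_\alpha\cdot f\cdot e_\alpha-f\|_\infty<\epsilon$. Hence $f$ lies in the norm-closure of $\LG\cdot\LIGG\cdot\LG$, so $f\in\LIGG_{\ess}$.

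The main obstacle is notational bookkeeping: pinning down the $\LG$-bimodule versus $\GGop$-right action conventions, and carrying out the change-of-variable substitutions (with their modular-function factors) needed to verify the key identity $\delta_{x_2}\cdot\phi\cdot\delta_{x_1}=(\delta_{x_2}\ast a)\cdot h\cdot(b\ast\delta_{x_1})$. Once this dictionary is fixed, both directions reduce to the standard facts that translation is strongly continuous in $\LG$ and that convolution with a suitable b.a.i.\ approximates uniformly continuous functions in sup norm.
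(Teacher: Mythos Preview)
Your argument is correct, but it takes a genuinely different route from the paper's proof. The paper does not verify the two inclusions separately; instead it reduces the whole statement to the classical one-variable identity $\LUC(H)=L^\infty(H)\cdot L^1(H)$ applied to $H=\GGop$. The key step in the paper is a single algebraic computation: using the isomorphism $\theta:\LG^{\rm op}\to L^1(\Gop)$, $f\mapsto f\bullet\Delta$, one checks that $F\odot(b\otimes\theta(a))=a\cdot F\cdot b$ for $F\in\LIGG$, which identifies the two-sided $\LG$-action on $\LIGG$ with the right $L^1(\GGop)$-action on $L^\infty(\GGop)$. Once this dictionary is in place, $\LIGG_{\ess}=L^\infty(\GGop)\odot L^1(\GGop)=\LUC(\GGop)$ is immediate.

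Your approach, by contrast, reproves the classical result in situ: Cohen factorization plus strong continuity of translation for one inclusion, and a shrinking-b.a.i.\ mollification for the other. This is perfectly valid and arguably more self-contained, since it avoids the modular-function bookkeeping of the isomorphism~$\theta$. Two minor points of presentation: the factorization $\phi=a\cdot h\cdot b$ requires two successive applications of Cohen (left, then right), not a single two-sided one; and the identity $\delta_{x_2}\cdot\phi\cdot\delta_{x_1}=(\delta_{x_2}\ast a)\cdot h\cdot(b\ast\delta_{x_1})$ is pure associativity of the $M(G)$-bimodule action, with no modular factors involved, so the ``main obstacle'' you flag is less severe than you suggest. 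The continuity of $a\cdot h\cdot b$ as a function on $\GG$ is the one place where your write-up is a bit compressed --- making explicit that $(a\cdot h\cdot b)(s,t)=\langle h,w_{s,t}\rangle$ for a norm-continuous family $w_{s,t}\in\LGG$ would tighten this.
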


\begin{proof}
For this proof, denote the action of the group algebra $L^1(\GGop)$ on its dual, $L^\infty(\GGop)$, by $\odot$. Then $L^\infty(\GGop)\odot L^1(\GGop)$ coincides as a subspace of $L^\infty(\GGop)$ with $\LUC(\GGop)$, so it suffices to prove that
\begin{equation}\label{eq:pedantic}
 L^\infty(\GGop)\odot L^1(\GGop) = \LIGG_{\ess} \quad\text{as subspaces of $\LIGG$}.
\tag{$\ddagger$}
\end{equation}

The basic idea is straightforward. The opposite algebra $\LG^{\rm op}$ is isomorphic to the group algebra $L^1(\Gop)$, where $\Gop$ is $G$ equipped with the opposite multiplication. Therefore, since $\LG$-bimodules can be regarded as one-sided modules over the enveloping algebra $\LG\ptp L^1(G)^{\rm op}$, they can (by a suitable change of variables) be regarded as one-sided modules over the group algebra $L^1(\GGop)$. However, since we want to identify both sides of \eqref{eq:pedantic} as concrete spaces of functions on $G\times G$, not just as abstractly isomorphic modules, we proceed in some detail.

Let $\lambda$ be the chosen left Haar measure on $G$ and $\Delta$ the modular function, so that $\Delta^{-1} \bullet \lambda$ is a \emph{left} Haar measure on $G^{\rm op}$.  Direct calculation, using properties of the modular function, shows that the map
$\theta: \LG^{\rm op}\to L^1(\Gop)$, $f \mapsto f\bullet\Delta$,
is an isometric isomorphism of Banach algebras.
Now, let $a,b,u,v\in \LG$ and let $F\in \LIGG=L^\infty(\GGop)$. Then $b\tp\theta(a)$ and $u\tp\theta(v)$ lie in $L^1(\GGop)$, and
\begin{eqnarray*}
 & & \pair{ F\odot  (b\tp \theta(a)) }{u\tp\theta(v)}_{L^\infty(\GGop)-L^1(\GGop)} \\
 & = & \pair{F}{bu \tp \theta(a)\theta(v)}_{L^\infty(\GGop)-L^1(\GGop)} \\
 & = & \pair{F}{bu \tp \theta(va)}_{L^\infty(\GGop)-L^1(\GGop)} \\
 & = & \pair{F}{bu\tp va}_{\LIGG-\LGG} \\
 & = & \pair{a\cdot F\cdot b}{u\tp v}_{\LIGG-\LGG}\;.
\end{eqnarray*}
By continuity, we therefore have
$F\odot (b\tp \theta(a) ) = a\cdot F \cdot b$
for all $a,b\in \LG$ and all $F\in \LIGG$,
from which~\eqref{eq:pedantic} follows.
\end{proof}

We now turn to characterizing $\LWAPL$. First, we have a general lemma: it could have gone in Section~\ref{s:diag-subsp}, but it is only needed here. 

\begin{lem}\label{l:multiplier-WAP}
Let $A$ be a Banach algebra with a b.a.i.~and let $M(A)$ be its multiplier algebra. Let $X$ be a neo-unital $A$-bimodule, and regard it as an $M(A)$-bimodule in the natural way (see e.g.~\cite[\S1.d]{BEJ_CIBA} or \cite[Theorem 3.2]{Daws_Diss10} for details). Then
$\AWAPA(X^*)= \basewap{M(A)}(X^*)$.
\end{lem}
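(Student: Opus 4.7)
The plan is to prove the two inclusions separately. The containment $\basewap{M(A)}(X^*)\subseteq\AWAPA(X^*)$ is automatic: since the natural $M(A)$-action on $X^*$ extends the $A$-action, each $A$-orbit map at $\psi$ factors through the corresponding $M(A)$-orbit map via the bounded inclusion $A\hookrightarrow M(A)$, so weak compactness passes to restrictions.

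The substantive direction is the reverse inclusion. Fix $\psi\in\AWAPA(X^*)$; I will show that the right $M(A)$-orbit map $R^{M(A)}_\psi:M(A)\to X^*$, $\mu\mapsto\psi\cdot\mu$, is weakly compact, the left case being symmetric. The first step is to exploit that $X$ is neo-unital and that $A$ is a two-sided ideal in $M(A)$. Let $(e_\alpha)$ be a b.a.i.\ for $A$ with $\norm{e_\alpha}\leq C$. For $\mu\in M(A)$ and any $x=a\cdot y\cdot b\in X$ with $a,b\in A$ and $y\in X$, the defining formula for the $M(A)$-action on $X$ gives $(e_\alpha\mu-\mu)\cdot x=(e_\alpha(\mu a)-\mu a)\cdot y\cdot b$, which tends to $0$ in norm because $\mu a\in A$ and $(e_\alpha)$ is a b.a.i.\ Dualizing yields the key identity $\psi\cdot\mu=\wslim_\alpha\psi\cdot(e_\alpha\mu)$ in $X^*$.

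Now fix $\mu\in M(A)$ with $\norm{\mu}\leq 1$. Since $A$ is an ideal in $M(A)$, each $e_\alpha\mu$ lies in $A$ with norm at most $C$, hence every $\psi\cdot(e_\alpha\mu)$ lies in the set $R^A_\psi(\{a\in A:\norm{a}\leq C\})$, which is relatively weakly compact by hypothesis. Its weak closure $K$ is therefore weakly compact in $X^*$. The decisive topological observation is that the weak topology on $X^*$ is finer than the \wstar-topology, so $K$ is also \wstar-closed and consequently contains the \wstar-limit $\psi\cdot\mu$. This shows $R^{M(A)}_\psi$ sends the unit ball of $M(A)$ into the weakly compact set $K$, which is exactly the weak compactness of $R^{M(A)}_\psi$.

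The main potential pitfall is the final topological step: one must carefully distinguish the weak and \wstar-topologies on $X^*$, and rely on the fact that weakly compact subsets of $X^*$ are automatically \wstar-closed, so that \wstar-limits of nets remaining inside them stay within. Once this is in place, the approximation argument converts hypothetical weak compactness over $A$ into actual weak compactness over $M(A)$, without having to revisit the delicate construction of the $M(A)$-action itself.
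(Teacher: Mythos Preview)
Your proof is correct but follows a genuinely different route from the paper's. For the substantive inclusion $\AWAPA(X^*)\subseteq\basewap{M(A)}(X^*)$, the paper invokes Lemma~\ref{l:biWAP-is-neounital} to write $\phi\in\AWAPA(X^*)$ as $\phi=a\cdot\psi$ with $a\in A$ and $\psi\in\AWAPA(X^*)$; the orbit map $R_\phi^{M(A)}$ then factors as $R^A_\psi\circ R^{M(A)}_a$ (using that $A$ is an ideal in $M(A)$), and weak compactness of $R^A_\psi$ finishes things in one line. Your argument instead approximates $\mu\in M(A)$ by $e_\alpha\mu\in A$, traps the net $\psi\cdot(e_\alpha\mu)$ inside a weakly compact set, and uses the fact that weakly compact subsets of a dual space are \wstar-compact (hence \wstar-closed) to catch the \wstar-limit $\psi\cdot\mu$. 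The paper's approach is shorter and more algebraic but leans on the earlier neo-unitality lemma for $\AWAPA(X^*)$; yours is self-contained, using only neo-unitality of $X$ itself, at the cost of the topological weak/\wstar\ step. One small remark on exposition: the sentence ``the weak topology on $X^*$ is finer than the \wstar-topology, so $K$ is also \wstar-closed'' is correct only because $K$ is weakly \emph{compact} (a finer topology alone does not make weakly closed sets \wstar-closed); it would be cleaner to say that the identity $(X^*,\text{weak})\to(X^*,\wstar)$ is continuous, so the weakly compact $K$ is \wstar-compact and hence \wstar-closed.
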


\begin{proof}
By the way we define the action of $M(A)$ on $X$ and hence on $X^*$, the inclusion\breakinscand\
 $\basewap{M(A)}(X^*)\subseteq \AWAPA(X^*)$ is straightforward. Conversely, let $\phi\in \AWAPA(X^*)$, and let $R_\phi^{M(A)} : M(A)\to X^*$ be the orbit map $F\mapsto F\cdot\phi$. Since $\AWAPA(X^*)$ is $A$-neo-unital by  Lemma~\ref{l:biWAP-is-neounital}, we have $\phi = a\cdot \psi$ for some $a\in A$ and $\psi\in \AWAPA(X^*)$. Therefore $R_\phi^{M(A)}$ can be factorized as $R^A_{\psi}\circ R^{M(A)}_a$,
using the fact that $A$ is a right ideal in $M(A)$. Since the second map is weakly compact, $R_\phi^{M(A)}$ is weakly compact. A similar argument, with left and right reversed,  shows that the orbit map $F\mapsto \phi\cdot F$ is also weakly compact. Thus $\phi\in\basewap{M(A)}(X^*)$ as required.
\end{proof}

Recall that $\ell^1(G_d)$ acts naturally on $\LIGG$ by left and right translations, with these actions arising from the natural $M(G)$-bimodule structure on $\LIGG$.

\begin{dfn}
Define $\GWAPG$ to be the set of all $f\in \CB(\GG)$ such that both the left and right $G$-orbits of $f$ are relatively weakly compact.
\end{dfn}

\begin{notn}
It is useful, as in the previous section, to write
\[ \cW \defeq \LGWAPLG (\LIGG).\]
Similarly, we write
$\cW_M \defeq \basewap{M(G)}(\LIGG)$ and
 $\cW_d \defeq \basewap{\ell^1(G_d)} (\LIGG)$.
\end{notn}

\begin{rem}\label{r:not-so-easy}
Since convex hulls of weakly compact sets are weakly compact, it follows from the definitions that $\GWAPG= \cW_d \cap \CB(\GG)$. What is not immediate from the definition of $\GWAPG$ is that it is contained in $\LIGG_{\ess}$. This makes the proof of the following result a little more tricky than one might hope; in particular, the argument used in the proof of Lemma~\ref{l:LUCSC_0} no longer suffices.
\end{rem}

\begin{thm}\label{t:WAP-of-LIGG}
The spaces $\GWAPG$,  $\cW$, $\cW_M$ and $\cW_d$ all coincide. In particular, they are all subspaces of $\CB(\GG)$.
\end{thm}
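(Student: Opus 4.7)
My plan is to establish the four-way equality $\GWAPG = \cW_d = \cW = \cW_M$. The inclusions $\cW_M \subseteq \cW$ and $\cW_M \subseteq \cW_d$ are immediate: $\LG$ and $\ell^1(G_d)$ embed isometrically as subalgebras of $M(G)$, and the restriction of a weakly compact linear map to a subspace is weakly compact. Similarly, $\GWAPG \subseteq \cW_d$ is the content of Remark~\ref{r:not-so-easy}.

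To identify $\cW$ with $\cW_M$, first observe that $\LGG = \LG\ptp\LG$ is a neo-unital $\LG$-bimodule: since $\LG$ has a b.a.i., Cohen's theorem applied on each tensor factor shows that every simple tensor admits a factorization $u\otimes v = a\cdot(u'\otimes v')\cdot b$ with $a,b\in\LG$. Hence Lemma~\ref{l:multiplier-WAP} applies with $A=\LG$ and $X=\LGG$, and since $M(\LG)=M(G)$ we obtain $\cW = \AWAPA(\LIGG) = \basewap{M(G)}(\LIGG) = \cW_M$.

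The main step is $\cW_d \subseteq \cW_M$. Naively passing from $\ell^1(G_d)$-weak compactness to $M(G)$-weak compactness via density fails, since $\ell^1(G_d)$ is not norm-dense in $M(G)$. Instead, for $f\in\cW_d$ the set $K = \{\delta_x\cdot f : x\in G\}$ is bounded and relatively weakly compact in $\LIGG$, so by Krein's theorem its closed absolutely convex hull $C$ is weakly compact. A Fubini-type computation using \eqref{eq:G-G-action} yields the Pettis-integral identity
\[ \pair{\mu\cdot f}{\phi} = \int_G \pair{\delta_x\cdot f}{\phi}\,d\mu(x) \qquad (\mu\in M(G),\ \phi\in\LGG), \]
whose integrand is continuous (hence measurable) in $x$ because $x\mapsto\phi\cdot\delta_x$ is norm-continuous in $\LGG$. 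Decomposing $\mu$ into its four positive parts (each of total mass at most $\|\mu\|$) exhibits $\mu\cdot f$ as lying in a fixed scalar multiple of $C$, hence in a weakly compact set. The symmetric argument for the right action gives $f\in\cW_M$, completing the chain $\cW_d = \cW = \cW_M$.

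Finally, for $\cW_d\subseteq\GWAPG$: by Lemma~\ref{l:biWAP-is-neounital}, $\cW = \AWAPA(\LIGG)$ is itself neo-unital as an $\LG$-bimodule and so lies in $\LIGG_{\ess}$; Theorem~\ref{t:ess-of-LIGG} identifies this with $\LUC(\GGop)\subseteq\CB(\GG)$. Thus every element of $\cW_d=\cW$ admits a continuous representative and therefore lies in $\GWAPG$. The main technical obstacle is the Pettis-integral identification of $\mu\cdot f$ together with the invocation of Krein's theorem to place the full $M(G)$-orbit inside a weakly compact set; all other inclusions reduce to routine functorial observations about neo-unitality and restrictions of weakly compact maps.
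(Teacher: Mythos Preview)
Your proof is correct and follows essentially the same architecture as the paper's: both use Lemma~\ref{l:multiplier-WAP} for $\cW=\cW_M$, both invoke Lemma~\ref{l:biWAP-is-neounital} together with Theorem~\ref{t:ess-of-LIGG} to place $\cW$ inside $\CB(\GG)$, and both identify $\cW_d\subseteq\cW_M$ as the only nontrivial inclusion.

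The one point of divergence is how you handle that key inclusion. The paper argues as follows: the set $S=\{a\cdot h:a\in\ell^1(G_d),\ \|a\|\le1\}$ is relatively weakly compact (closed convex hulls of weakly compact sets being weakly compact), so its \wstar-closure is weakly compact; then, using strict density of $\ell^1(G_d)$ in $M(G)$ and the fact that strict convergence $f_i\to\mu$ forces $\|w\cdot f_i-w\cdot\mu\|\to0$ for each $w\in\LGG$, one gets $f_i\cdot h\to\mu\cdot h$ \wstar, so $\mu\cdot h\in\overline{S}^{\,\wstar}$. Your Pettis-integral/Krein route reaches the same conclusion by writing $\mu\cdot f$ as a barycenter of the $G$-orbit and invoking the separation argument that places barycenters inside closed convex sets. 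Both arguments are really saying that the $M(G)$-orbit sits inside the \wstar-closure of the absolutely convex hull of the $G$-orbit; the paper's strict-approximation phrasing is marginally more direct, while yours makes the vector-valued integration structure explicit. Neither gains any real generality over the other.
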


\begin{proof}
By Lemma~\ref{l:multiplier-WAP}, $\cW =  \cW_M$. Also, by Lemma~\ref{l:biWAP-is-neounital} and Theorem~\ref{t:ess-of-LIGG}, $\cW \subseteq \LUC(\GGop) \subseteq \CB(\GG)$. Hence
\[ \cW = \cW_M =\cW_M\cap \CB(\GG) \subseteq \cW_d\cap \CB(\GG) = \GWAPG \;,\]
the inclusion being a trivial consequence of the inclusion $\ell^1(G_d)\subset M(G)$, and the final equality holding by Remark~\ref{r:not-so-easy}. The only thing left to prove is that $\cW_d\subseteq \cW_M$.

Let $h \in \cW_d$. As $G\cdot h$ is relatively weakly compact,
 and convex hulls of weakly compact sets are weakly compact, the set $S=\{ a\cdot h \st a\in \ell^1(G_d), \norm{a}\leq 1\}$ is a relatively weakly compact subset of $\LIGG$.
When a subset of a dual space is relatively weakly compact, its weak and \wstar\ closures coincide. Therefore, $\overline{S}^{\wstar}$ is weakly compact.

Now, given $\mu \in M(G)$ with $\norm{\mu} \leq 1$,  there is a net $(f_i)$ in the unit ball of $\ell^1(G_d)$ such that $f_i \to \mu$ in the strict topology of $M(G)$: see, e.g.~\cite[Lemma 1.1.3]{Gre}.
It follows that
$\norm{ w \cdot f_i - w \cdot \mu} \to 0$ for all $w \in \LGG$
 (one can first prove this for $w$ an elementary tensor in $\LG\tp\LG$, and then observe that such tensors have dense linear span in $\LGG$).
Therefore $f_i \cdot h \to \mu \cdot h$ \wstar\ in $\LIGG$, so that
$\{ \mu \cdot h: \mu \in M(G), \ \|\mu \| \leq1\} \subseteq \overline{S}^{\wstar}$.
By the previous paragraph, this implies that the left orbit map $R^{M(G)}_h : M(G)\to \LIGG$ is weakly compact.

A similar argument, with left and right swapped, shows that the right orbit map $L^{M(G)}_h:M(G)\to \LIGG$ is also weakly compact. Thus $h \in  \cW_M$, as required.
\end{proof}

\end{section}

\begin{section}{Closing remarks and questions}
\label{s:closing}
Having developed the basic machinery of the functor $\F_A$, which associates to each  $A$-bimodule a canonical normal dual $\AWAPA(A^*)^*$-bimodule, it would be interesting to understand $\F_A$ in more detail. For instance, the proof of Lemma~\ref{l:surprising-extension} (and hence, the proof of Theorem~\ref{t:weakly-universal}) could have been made more transparent if we knew that $\F_A$ preserves short exact sequences of $A$-bimodules. A proof of this, or a class of counterexamples, would be desirable. One would also like to know what $\F_A$ does to projective, injective or flat $A$-bimodules: are they sent to normal dual modules that satisfy appropriate \emph{and non-artificial} notions of projectivity, injectivity or flatness for the category of normal dual $\FA$-bimodules?

The initial motivation for introducing and studying $\F_A$ was an attempt to find a systematic approach, for suitable classes of algebras, to the problem of whether Connes-amenability of $\AWAPA(A^*)$ implies amenability of~$A$. With this in mind, the methods of Section~\ref{s:anabasis} might be applicable, with suitable modifications, to the study of Connes-amenability for $\F(\cM_*)$ when $\cM$ is a commutative Hopf--von Neumann algebra: the cases $\cM=L^\infty(G)$ and $\cM=\ell^\infty(\Nmin)$ have been treated in the present paper.
The general setting is left for possible future research.

Moving away from topics related to amenability: the space $\GWAPG$ seems deserving of further study. In particular, while it contains $\WAP(\GG)=\WAP(\GGop)$, how much bigger is it in general? Also, it is easy to show that each $h\in\GWAPG$ is separately weakly almost periodic, in the sense that $h(\cdot,x)\in\WAP(G)$ and $h(x,\cdot)\in\WAP(G)$ for all $x\in G$. Does this characterize the functions in $\GWAPG$? We intend to return to these questions in other future work.
\end{section}

\section*{Acknowledgements}
The diagrams in this paper were prepared using Paul Taylor's \texttt{diagrams.sty} macros.

This project was initiated during a visit of the third author (RS) to Saskatoon in December 2010.
Further work was done during the Banach Algebras 2011 conference in Waterloo.
Some revisions were also made while the first and second authors (YC+ES) attended the meeting ``Harmonic analysis, operator algebras, and representations'' held at the CIRM, Luminy in October 2012.

The first author (YC) was partially supported by a New Faculty start-up grant from the University of Sask\-atchewan, and partially by NSERC Discovery Grant 402153-2011. He also thanks the Department of Mathematics and Statistics at Lancaster for its support while important revisions were made to the paper.
The second author (ES) was supported by NSERC Discovery Grant 366066-2009. The third author (RS) was supported by NSERC Discovery Grant 29844-2010.

Finally, the authors thank the referee for an attentive reading and several valuable, detailed suggestions, with particular thanks for the improvements to an earlier version of  Theorem~\ref{t:weakly-universal} and its proof.


\bibliography{wap-dba-paper}
\bibliographystyle{siam}

\vfill

\leftline{Yemon Choi}
\leftline{Department of Mathematics and Statistics}
\leftline{University of Saskatchewan}
\leftline{Saskatoon (SK), Canada S7N 5E6}

\smallskip
\leftline{\textbf{\textsf{Current address:}}}
\leftline{Department of Mathematics and Statistics}
\leftline{Lancaster University}
\leftline{Lancaster, United Kingdom LA1 4YF}
\leftline{\texttt{y.choi1@lancaster.ac.uk}}

\bigskip
\leftline{Ebrahim Samei}
\leftline{Department of Mathematics and Statistics}
\leftline{University of Saskatchewan}
\leftline{Saskatoon (SK), Canada S7N 5E6}
\leftline{\texttt{choi@math.usask.ca} and \texttt{samei@math.usask.ca}}

\bigskip
\leftline{Ross Stokke}
\leftline{Department of Mathematics and Statistics}
\leftline{University of Winnipeg}
\leftline{Winnipeg (MB), Canada R3B 2E9}
\leftline{\texttt{r.stokke@uwinnipeg.ca}}

\end{document}